\definecolor{MyCyan}{HTML}{00F9DE}
\long\def\@savemarbox#1#2{\global\setbox#1\vtop{\hsize\marginparwidth 
  \@parboxrestore\tiny\raggedright #2}}
\newcommand{\RR}{\mathbb{R}}  
\newcommand{\ZZ}{\mathbb{Z}}
\renewcommand{\SS}{\mathbb{S}}
\renewcommand{\setminus}{{\smallsetminus}}
\newtheorem{theorem}{Theorem}[section]
\newtheorem*{theorem*}{Theorem}
\newtheorem{proposition}[theorem]{Proposition}
\newtheorem{lemma}[theorem]{Lemma}
\newtheorem{corollary}[theorem]{Corollary}
\newtheorem*{namedtheorem}{\theoremname}
\newcommand{\theoremname}{testing}
\newenvironment{named}[1]{\renewcommand{\theoremname}{#1}\begin{namedtheorem}}{\end{namedtheorem}}
\theoremstyle{definition}
\newtheorem{definition}[theorem]{Definition}
\newtheorem{conjecture}[theorem]{Conjecture}
\newtheorem{remark}[theorem]{Remark}
\newtheorem{algorithm}[theorem]{Algorithm}
\newcommand{\refthm}[1]{Theorem~\ref{Thm:#1}}
\newcommand{\reflem}[1]{Lemma~\ref{Lem:#1}}
\newcommand{\refprop}[1]{Proposition~\ref{Prop:#1}}
\newcommand{\refcor}[1]{Corollary~\ref{Cor:#1}}
\newcommand{\refdef}[1]{Definition~\ref{Def:#1}}
\newcommand{\refsec}[1]{Section~\ref{Sec:#1}}
\newcommand{\reffig}[1]{Figure~\ref{Fig:#1}}
\title{A robot that unknots knots} 
\author[Hui]{Connie On Yu Hui} 
\address[]{School of Mathematics, Monash University, Australia } 
\email[]{onyu.hui@monash.edu | connieonyuhui@gmail.com} 
\author[Ibarra]{Dionne Ibarra}
\address{School of Mathematics, Monash University, Australia}
\email[]{dionne.ibarra@monash.edu} 
\author[Kauffman]{Louis H. Kauffman} 
\address[]{Math Dept, University of Illinois at Chicago\newline \indent
851 South Morgan Street, Chicago, IL 60607-7045, USA, and \newline \indent
International Institute for Sustainability with Knotted Chiral Meta Matter (WPI-SKCM2),\newline \indent
Hiroshima University, 1-3-1 Kagamiyama, Higashi-Hiroshima, Hiroshima 739-8526, Japan
} 
\email[]{loukau@gmail.com} 
\author[McQuire]{Emma N. McQuire} 
\address[]{School of Mathematics, Monash University, Australia } 
\email[]{Emma.McQuire@monash.edu} 
\author[Montoya-V.]{Gabriel Montoya-Vega} 
\address[]{Department of Mathematics, University of Puerto Rico at R\'io Piedras, San Juan, PR, USA} 
\email[]{gabrielmontoyavega@gmail.com | gabriel.montoya@upr.edu} 
\author[Mukherjee]{Sujoy Mukherjee} 
\address[]{Department of Mathematics, University of Denver, USA} 
\email[]{sujoymukherjee.math@gmail.com | sujoy.mukherjee@du.edu} 
\author[Reid]{Corbin Reid} 
\address[]{School of Mathematics, Monash University, Australia } 
\email[]{Corbin.Reid1@monash.edu}
\subjclass[2020]{57K10 (primary); 57M15, 57Z10, 92-10 (secondary).}
\keywords{Unknotting operations, Reidemeister moves, ascending diagrams, descending diagrams, Gauss codes, detour moves, monotonic simplification, pseudoknots, electrical circuits, DNA}
\begin{document} 

\begin{abstract} 
Consider a robot that remembers only the starting position and walks along a knot once on a knot diagram, switching every undercrossing it meets until it returns to the starting position. 
We observe that the robot produces an ascending diagram, and we provide a new combinatorial proof that every ascending or descending knot diagram can be transformed into the zero-crossing unknot diagram.  Using the machinery developed from the combinatorial proof, we show that the minimal number of Reidemeister moves required for such a transformation is bounded above by $(7C+1)C$ if the diagram has $C$ crossings. 
Moreover, we provide a new alternative proof that there exist sequences of Reidemeister moves that do not increase the number of crossings and transform ascending or descending knot diagrams into zero-crossing unknot diagrams.  
\end{abstract}

\maketitle
\tableofcontents
\section{Introduction} 
Given a knot diagram of a knot, a standard way to unknot the knot is to switch some crossings in the diagram. The unknotting method presented in \cite[p.58-59]{Adams:TheKnotBook} (see also \cite{AlexanderConwayPoly, PrzytyckiTraczyk}) traverses the knot in the diagram from a given starting point in a given direction once. If we arrive at a certain crossing for the first time along the understrand, we switch it. Otherwise, we do not do any switching. Such a method produces a descending diagram, which is an unknot diagram. The unknottedness of a descending or ascending knot diagram is immediately obvious if we pull up the starting point, see Figure~\ref{Fig:3dto2d} for an example.

\begin{figure}[ht]
    \centering
     $$ \vcenter{\hbox{\begin{overpic}[scale=1]{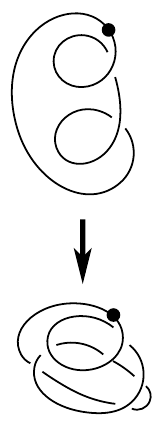}
     \put(100, 150){$\hookrightarrow \mathbb{R}^3$}
      \put(100, 20){$\hookrightarrow \mathbb{R}^2$}
\end{overpic}}}$$ 
    \caption{Illustration of a projection of an ascending diagram viewed from above, with starting point marked. If one pulls up the starting point, the diagram unknots.} 
    \label{Fig:3dto2d}
\end{figure}

Observe that the aforementioned standard unknotting algorithm requires us to remember which crossings have been met as we trace along the knot in the diagram. In this paper, we consider a relatively memoryless algorithm (which only requires the robot to remember the starting position): 

\begin{algorithm}[Unknotting robot] \label{Alg:Robot}
    Given a planar knot diagram of a knot $K$, choose and fix a starting point (that is not a crossing) on the knot and an orientation. Place a \textit{robot} on the knot at the starting point. The robot will follow the orientation, check at every point in the knot (except at crossings) whether it is at the starting point\footnote{In practice, this can be avoided by turning the knot diagram into a long knot diagram with a starting and ending point in which case the algorithm ends when the robot reaches the ending point.}, and stop when it comes back to the starting point. When it reaches an overcrossing, it passes through, but the robot is scared of the dark; when it reaches an undercrossing, it switches it to an overcrossing before passing through. See Figure~\ref{Fig:ActionofRobot}. 
\end{algorithm} 

\begin{figure}[ht]
    \centering
\includegraphics{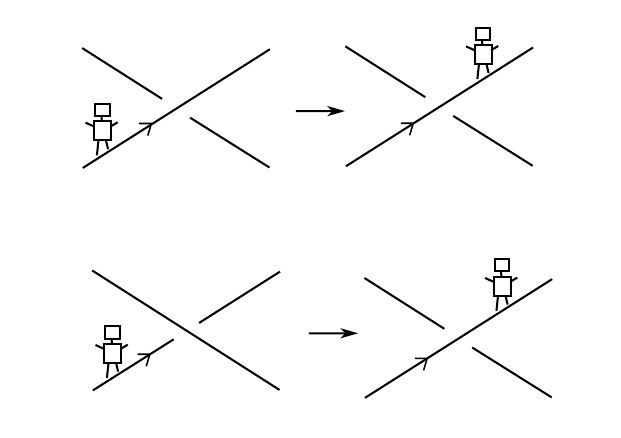}
    \caption{Action of robot at overcrossing (top) and undercrossing (bottom). Arrow shows direction of robot's travel. }
    \label{Fig:ActionofRobot}
\end{figure} 

Observe that the robot meets every crossing exactly twice. If it meets a crossing along an overstrand first, it will switch it the second time it walks past the crossing; if it meets a crossing along an understrand first, it will switch the crossing twice, leaving it unchanged. As we must meet each crossing along understrands first as we trace along an ascending diagram, the above proves the following statement: 

\begin{theorem}\label{Thm:UnknottingKnots}
The robot in Algorithm~\ref{Alg:Robot} produces an ascending diagram. 
\end{theorem}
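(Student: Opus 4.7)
The plan is to analyze the robot's behavior crossing by crossing and show that, regardless of the starting condition, each crossing in the final diagram has its first-visited strand as the understrand---which is precisely the definition of an ascending diagram relative to the chosen starting point and orientation.

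First I would record the easy combinatorial fact that, since the robot traces the knot exactly once along its chosen orientation, it visits every crossing exactly twice, once on each of the two strands incident to that crossing. So for each crossing $c$ I can unambiguously speak of the \emph{first-visited} strand and the \emph{second-visited} strand of $c$. I would also note that a switch performed by the robot at one crossing only alters the over/under data at that crossing and not at any other crossing, so the two visits at each crossing can be analyzed independently of what happens elsewhere.

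The core of the proof is then a two-case analysis based on the configuration at $c$ at the moment of the first visit. In the case where the first-visited strand is currently the overstrand, the robot passes through without doing anything, and on the second visit (now along the understrand) it performs a single switch, leaving the first-visited strand as the understrand in the final diagram. In the case where the first-visited strand is currently the understrand, the robot switches on the first visit (so the first-visited strand becomes the overstrand) and then switches again on the second visit (reversing the previous change), so the crossing ends up in its original configuration---still with the first-visited strand as the understrand. In both cases, in the resulting diagram the first-visited strand at $c$ is under; since this holds at every crossing, the final diagram satisfies the defining property of an ascending diagram.

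I do not expect a serious obstacle here: the main subtlety to check carefully is that a switch performed at one crossing does not disturb the over/under relationship at any other crossing, so that the local analysis at each $c$ is truly independent and the notions of ``first visit'' and ``second visit'' are well defined from the fixed starting point and orientation.
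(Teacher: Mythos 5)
Your proposal is correct and follows essentially the same argument as the paper: the robot visits each crossing exactly twice, and the two-case analysis (first visit on the overstrand gives one switch on the second pass; first visit on the understrand gives two switches that cancel) shows the first-visited strand ends up under at every crossing, which is the defining property of an ascending diagram. The paper packages the same observation slightly more compactly (and, in Section~3, rephrases it via Gauss codes by noting that after the second visit the crossing is always an overcrossing in the direction of travel), but the underlying reasoning is identical.
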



The paper \cite{PrzytyckiTraczyk} by Przytycki and Traczyk provides a combinatorial proof for the unknottedness of ascending and descending diagrams and is probably the first paper to do so via the Reidemeister moves. A proof is also given by Bazier-Matte and Schiffler in \cite{BazierMatte-Schiffler:ClusterAlgebrasII}. A main goal of our paper is to provide a new combinatorial proof for the fact that an ascending or descending diagram is an unknot diagram. Our combinatorial proof has several advantages. First, it gave us insight and tools for proving an upper bound of the minimal number of Reidemeister moves required for transforming an ascending or descending knot diagram into the zero-crossing unknot diagram (see Theorem~\ref{Thm:UpBdRMoves}). Second, the machinery from the combinatorial proof also allows us to provide a new alternative proof that an ascending or descending knot diagram can be transformed into an unknot diagram using Reidemeister moves that do not increase the number of crossings. Such a combinatorial proof can be understood by Flatlanders living in the Flatland of $\mathbb{R}^2$, who do not have enough dimensions to understand geometric arguments such as the one shown in \reffig{3dto2d}. In this paper, we are going on an expedition down under to provide a combinatorial proof that can be understood by Flatlanders. 

\refsec{UnknottingKnots} details the combinatorial arguments, which can be outlined as follows:
First, we describe the action of the robot in terms of the knot diagrams' Gauss codes and show that the robot action produces ascending knot diagrams. Second, we introduce loop detour moves, explain how these moves affect the Gauss code of a knot diagram, and use these moves to show that an ascending knot diagram can be transformed into an unknot diagram.  



\subsection{An upper bound on the minimal number of Reidemeister moves for ascending diagrams}
Using the machinery developed from the combinatorial proof, we provide an upper bound for the minimal number of Reidemeister moves required for transforming an ascending or descending diagram into an unknot diagram. 

In 2001, Hass and Lagarias published a paper \cite{Hass-Lagarias:NumRMovesForUnknotting} showing that there exists a positive constant $K$ such that for each positive integer $c$, any unknotted knot diagram with $c$ crossings can be transformed to the zero-crossing knot diagram using at most $2^{Kc}$ Reidemeister moves. Their bound is exponential with respect to the number of crossings. In 2015, Lackenby gave an improved upper bound in \cite{Lackenby:PolynomialUpBdRMoves}. The paper showed that any diagram of the unknot with $c$ crossings may be transformed into the zero-crossing unknot diagram using at most $(236c)^{11}$ Reidemeister moves. Instead of an exponential growth with respect to the number of crossings $c$, Lackenby gave an upper bound that is a polynomial function of $c$. 

 In this article, we show an upper bound that is quadratic with respect to the number of crossings for ascending or descending knot diagrams: 

\begin{named}{\refthm{UpBdRMoves}}
    Any ascending or descending knot diagram with $C$ crossings can be transformed into the zero-crossing unknot diagram using at most $(7C+1)C$ Reidemeister moves. 
\end{named} 

A key concept we used for showing \refthm{UpBdRMoves} is the loop detour move, which, roughly speaking, is a move on a one-crossing kink encircling a tangle diagram (see \reffig{LoopTangleDiag} for example) such that the loop stays away from the tangle after the move. Note that the sequence of Reidemeister moves we construct for each loop detour move may contain moves that increase the number of crossings in the diagram.

Other related works include the following: C. Hayashi, M. Hayashi, Sawada, and Yamada in \cite{Hayashi-Hayashi-Sawada-Yamada_MinUnknottingSeq} showed that each member in a special collection of ascending diagrams can be transformed into the trivial diagram by a minimal sequence of Reidemeister moves. In \cite{Hass-Nowik:UnknotRequireQuadraticNum}, Hass and Nowik gave a lower bound for the number of Reidemeister moves required to turn each member in a family of unknotted knot diagrams to a zero-crossing knot diagram.  Their lower bound is a quadratic polynomial function of the number of crossings.

\subsection{Simplification of ascending diagrams}
Unknotting problems also consist of questions regarding the \emph{monotonic simplification} of diagrams of unknots; these are sequences of Reidemeister moves that do not increase the number of crossings in the diagram at any stage. Due to Goeritz in \cite{Goeritzhardunknots} and Kauffman and Lambropoulou in \cite{KLHardunknots}, there are well known examples of \textit{hard unknots}: diagrams of unknots that cannot be monotonically simplified via a sequence of Reidemeister moves. For succinctness, we will call a monotonic simplification a \textit{simplification} and if there exists a monotonic simplification of a knot diagram, we will say that the knot diagram can be \textit{simplified}. 

We used the machinery from the combinatorial argument to prove that any ascending knot diagram can be simplified. After writing the proof, we found Section~6 of a recent paper by Bazier-Matte and Schiffler~\cite{BazierMatte-Schiffler:ClusterAlgebrasII}, which proves a simplification result for ascending diagrams. We were also directed to Lemma~2.14 in a paper~\cite{PrzytyckiTraczyk} written by Przytycki and Traczyk (published in 1988), which gives a combinatorial proof that ascending diagrams can be simplified to an unknot diagram by Reidemeister moves \cite{PrzytyckiTraczyk}.  In the present paper, we provide a new alternative proof that ascending diagrams can be simplified by Reidemeister moves that use our loop-tangle machinery. 

\begin{named}{\refcor{Simplified}}
An ascending diagram can always be simplified by Reidemeister moves to a crossingless unknot diagram. 
\end{named}  

Since this result is fundamental to skein theory, here related to the robot unknotting process, we feel that the exposition of yet another proof of the result is justified.
Part of the originality of our paper arises in the relationship between ascending diagrams and the robot unknotter. 

This result is a corollary to Theorem~\ref{thm:always_simplify} which states that it is always possible to simplify the interior of a loop-tangle subdiagram in an ascending diagram such that the associated loop detour move may be performed as a simplification. Observe that there exist loop-tangle subdiagrams that cannot be immediately simplified via Reidemeister~I or II moves, see Figure~\ref{fig:complicated_loop-tangle} for example. 



\subsection{Interdisciplinary connections}
Unknotting problems can also be seen across disciplines. In \cite{Goldman-Kauffman:KnotsTanglesAElectricalNetworks}, Goldman and Kauffman provided a translation of Reidemeister moves on link diagrams to graphical moves on signed graphs. Their paper reviewed the background on classical electrical networks which can be modelled by a graph with edges assigned with resistance. The study of such electrical networks motivates a connection between knot invariants and conductance-preserving transformations on networks. Using the knot-theoretic machinery we develop in this paper, we can prove the following graph-theoretic statement.

\begin{named}{\refcor{PlanarGraph}}
    Any planar graph can be transformed into a finite collection of points via a sequence of the graphical moves.
\end{named} 

In \cite{Wang}, L. C. Wang studied DNA topoisomerases and observed that these enzymes have unknotting properties on DNA circles. DNA topoisomerases are enzymes that create double strand break and rejoin in DNA circles as discussed in \cite{WANGLUIDNA}. We use blackboard framed projections of framed knots to extend the unknotting robot to framed knots and compare it to a hypothesized model of DNA topoisomerases.

\subsection{Acknowledgements}  
The project started during the MATRIX Research program: Low Dimensional Topology: Invariants of Links, Homology Theories, and Complexity. We thank the mathematical research institute MATRIX in Australia where part of this research was performed.
DI acknowledges the support by the Australian Research Council grant DP210103136 and DP240102350. It gives LHK pleasure to recall that the Robot made its first appearance in his conversations with Allison Henrich and that the Robot will make further appearances in their work on knot magic. 
GMV was supported by the Matrix-Simons travel grant and acknowledges the support of the National Science Foundation through Grant DMS-2212736. SM was supported by the Matrix-Simons travel grant and is grateful to P. Vojt\v{e}chovsk\'{y} for his support through the Simons Foundation Mathematics and Physical Sciences Collaboration Grant for Mathematicians no. 855097. We thank Józef Przytycki for informing us of Lemma~2.14 in~\cite{PrzytyckiTraczyk}.

\section{Preliminaries}\label{Sec:Prelim} 

We seek to show that the robot operation always gives us an ascending diagram and to provide a combinatorial argument for the fact that an ascending diagram is an unknot diagram.  This section provides preliminaries on how we use Gauss codes to combinatorially prove the aforementioned statements in later sections. 

\begin{definition}\label{Def:GaussCode} \ 
\begin{enumerate}
    \item  Given an oriented knot diagram $D$ with $n$ crossings and a starting point $b$ on $D$, we label each crossing with a unique symbol and call each such symbol a \emph{crossing label}. 
    \item The \emph{Gauss code} of the diagram $D$ based at $b$ is the sequence of $2n$ letters that is obtained by following the orientation of $D$ from $b$ and recording the crossings met in order and whether each crossing is traversed under or over. 
    \item Each \emph{letter} in a Gauss code is of the form $x^\epsilon$, where $x$ denotes the crossing label and $\epsilon\in\{-1,+1\}$. A crossing $x$ traversed by an overcrossing is marked $x^{+1}$ and a crossing traversed by an undercrossing is marked $x^{-1}$. 
    \item An \emph{empty Gauss code} is a Gauss code with no letters.  
\end{enumerate}
\end{definition}

Note that each crossing label always appears exactly twice in a Gauss code, once as an overcrossing and once as an undercrossing. 

An empty Gauss code corresponds to an unknot diagram with no crossings. 

\begin{definition}
    A diagram that is \emph{ascending from $b$}, or simply an \emph{ascending diagram}, is an oriented diagram for which, following the diagram from a starting point $b$, crossings are first met as under- and then as an overcrossing. A \emph{descending diagram} is the opposite; crossings are first met as over- then as undercrossings. Equivalently, each type of diagram is the reverse orientation of the other.
\end{definition} 

Note that, for a given diagram, each choice of arc between crossings and direction of travel gives a potentially distinct ascending diagram; in a diagram with $n$ crossings, this is up to $4n$ distinct ascending diagrams. Another way to see this is that there are four directions a crossing may be first met when ascending from starting points in the arcs around it, each of which gives a potentially distinct ascending diagram.

\begin{lemma} \label{Lem:GaussCodesForAscending}
Let $D$ be an oriented knot diagram. Let $\omega$ be the Gauss code obtained after fixing an arbitrary starting point $b$.  The diagram $D$ is ascending from $b$ if and only if the first and second appearances of each crossing label in $\omega$ are under and over, respectively. The diagram is descending from $b$ if the labels instead appear as over before under. 
\end{lemma}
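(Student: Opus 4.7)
The plan is to observe that this lemma is essentially an unpacking of definitions: the Gauss code is, by construction, a faithful record of the order and type (over/under) of crossings encountered while traversing $D$ from $b$ in the chosen orientation, and ``ascending from $b$'' is itself phrased in terms of this very traversal. So the work is to line up the two descriptions carefully and verify that they coincide letter by letter.

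First I would recall that each crossing of $D$ is visited exactly twice during one full traversal of the knot from $b$, since the two strands passing through a crossing correspond to two distinct arcs of the knot. Consequently, each crossing label $x$ appears exactly twice in $\omega$, once as $x^{+1}$ and once as $x^{-1}$, by \refdef{GaussCode}. This is the structural fact that underpins the whole statement and is worth making explicit before invoking either direction.

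For the forward direction, I would assume $D$ is ascending from $b$ and fix an arbitrary crossing with label $x$. By the definition of ``ascending from $b$,'' the first visit to this crossing along the traversal is an undercrossing and the second is an overcrossing. Since the Gauss code records visits in traversal order with the sign convention $x^{-1}$ for under and $x^{+1}$ for over, the first occurrence of $x$ in $\omega$ is $x^{-1}$ and the second is $x^{+1}$. For the converse direction, I would run the same argument in reverse: if for every label $x$ the first occurrence in $\omega$ is $x^{-1}$ and the second is $x^{+1}$, then by how $\omega$ was constructed the first visit to that crossing is under and the second is over, which is exactly the ascending condition. The descending case then follows at once by reversing the orientation of $D$, which swaps the roles of ``first'' and ``second'' visits at every crossing.

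The main ``obstacle,'' if there is one, is purely a matter of bookkeeping: one must be explicit that the ``first/second'' in the geometric definition of ascending refers to the same traversal ordering from $b$ as the ``first/second appearance'' of a letter in $\omega$. Because this correspondence is built into the definition of the Gauss code, no further geometric or combinatorial input is required, and the proof is essentially a two-line translation in each direction.
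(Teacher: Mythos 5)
Your proposal is correct and matches the paper's own proof, which likewise treats the lemma as a direct unpacking of the definitions of ``ascending/descending from $b$'' and of the Gauss code, with the descending case handled by the same translation. No gaps; the extra remark that each label occurs exactly twice is already noted in the paper just after \refdef{GaussCode}.
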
 

\begin{proof}
    Given a starting point $b$ and a walking direction, the diagram $D$ is ascending from $b$ if and only if every crossing is met under before over, if and only if each crossing appears first as an undercrossing then as an overcrossing in the Gauss code. 
    
    The argument for the case of a descending diagram follows similarly from the definition of a descending diagram and the definition of Gauss code.
\end{proof} 

One of the key ideas for showing the main results in \refsec{UnknottingKnots} is the use of loop detour moves. This section provides the details of detour moves, tangle detour moves, and loop detour moves. 

\begin{definition}
    Given two points $a_1, a_2$ on a knot or link diagram connected by an arc $\gamma$ that meets only undercrossings (or only overcrossings resp.), a \emph{detour move} is an ambient isotopy of $\gamma$ such that the following hold: 
    \begin{enumerate}
        \item In the ambient isotopy, $\gamma$ moves beneath (resp. above) the rest of the diagram, 
        \item a link diagram is obtained after the move, and
        \item the deformed $\gamma$ either does not meet any crossing or meets only undercrossings (resp. overcrossings). 
    \end{enumerate}
    We call $\gamma$ the \emph{detour arc}. See \reffig{DetourMove} for example. 
\end{definition} 

\begin{figure}[ht]
    \centering
     $$ \vcenter{\hbox{\begin{overpic}[scale=1]{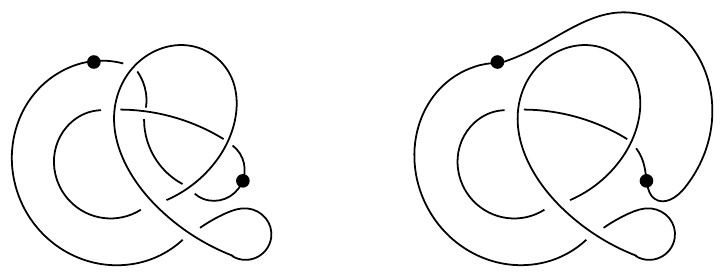}
\put(140, 60){$\xrightarrow[]{Detour}$}
\put(43, 110){$a_1$}
\put(123, 45){$a_2$}
\put(235, 110){$a_1$}
\put(316, 45){$a_2$}
\end{overpic}}}$$ 
    \caption{Illustration of a detour move.}
    \label{Fig:DetourMove}
\end{figure}

\begin{definition} \label{Def:TangDetourMove} 
    Let $D$ be a knot diagram of some knot $K$. 
    Let $T$ be a tangle sub-diagram of $D$ consisting of $\frac{m+n}{2}$ disjoint subarcs of $K$, where $m$ and $n$ are nonnegative integers whose sum is even. 

    An \emph{$(m,n)$-tangle detour move $M$ corresponding to $T$} is a detour move over the tangle $T$ such that the following hold: 
    \begin{enumerate}
        \item the detour arc $\gamma$ of $M$ is a subarc of $K$ disjoint from $T$, and
        \item $\gamma$ meets $T$ at exactly $m$ distinct crossings before the move and exactly $n$ distinct crossings after the move. 
    \end{enumerate}
    We call $T$ the \emph{$(m,n)$-tangle diagram} associated to the tangle detour move $M$. 
\end{definition} 

See \reffig{TangleDetourMove} for an example of a $(6,4)$-tangle detour move and observe that \reffig{DetourMove} shows a detour move that is not a tangle detour move. 

\begin{figure}[H]
    \centering
    \includegraphics[width=0.8\linewidth]{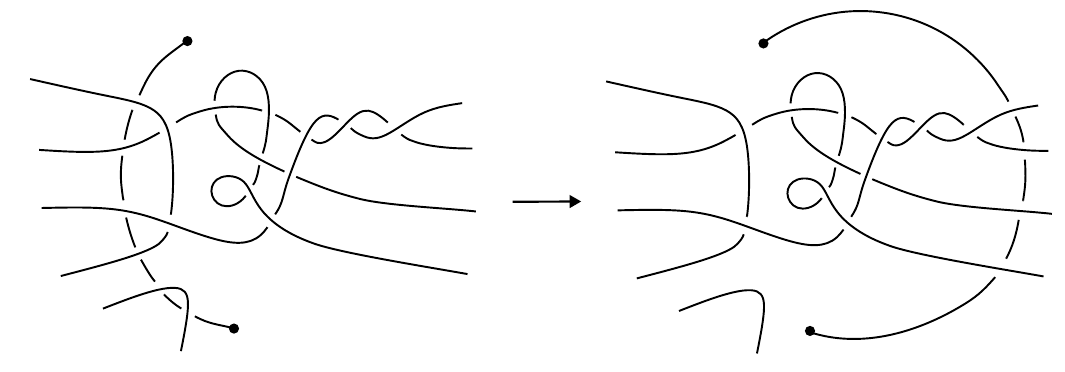}
    \caption{A (6,4)-tangle detour move. The detour arc is also shown.}
    \label{Fig:TangleDetourMove}
\end{figure}

Let $\mathcal{D}$ be the closed disc that $\gamma$ sweeps over. Following from the definition of an $(m,n)$-tangle detour move, the (possibly zero) integers $m$ and $n$ have the same parity. 

Note that an $(m,n)$-tangle $T$ in a knot diagram consists of $\frac{m+n}{2}$ arcs in the closed disc $\mathcal{D}$. The intersection between the boundary of $\mathcal{D}$ and the arcs are exactly the endpoints of all the arcs.  We call the sub-graph obtained by disregarding the over/under-crossing information in the sub-diagram corresponding to $T$ an \emph{$(m,n)$-tangle graph}, or simply a \emph{tangle graph} when the context is clear.

Given a tangle graph, we call each edge with exactly one vertex as an endpoint a \emph{sprout} and we call each edge with two (possibly the same) vertices as endpoints a \emph{non-sprout edge}. For any strand in the tangle graph that does not have any vertices, we call it a \emph{trivial arc}. 

\begin{lemma} \label{Lem:NoCommonEdge} 
    If $P$ is a polygon in a planar $4$-valent graph $G$, then for any two sides of $P$, they cannot be the same edge in $G$.
\end{lemma}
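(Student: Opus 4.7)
The plan is to proceed by contradiction. Suppose for contradiction that some edge $e = \{u,v\}$ of $G$ occurs as two distinct sides of the polygon $P$. I would derive a parity contradiction by first showing that $e$ must then be a bridge in its connected component and then invoking the handshake lemma on the pieces obtained after deleting $e$.

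For the first step, I would translate the condition ``$e$ occurs twice on the boundary of $P$'' into the graph-theoretic condition ``$e$ is a bridge of $G$.'' Regarding $P$ as a face of the planar embedding (equivalently, a polygonal region whose boundary walk traverses $e$ twice), both local sides of $e$ in the plane lie on the boundary of the same region $P$. Via the standard planar-dual correspondence --- an edge appears twice on a face boundary if and only if its dual is a loop, which happens if and only if the primal edge is a bridge --- this forces $e$ to be a bridge in the connected component of $G$ containing it. Alternatively, one can argue directly: if $e$ were not a bridge, then any $u$--$v$ path in $G \setminus \{e\}$, together with $e$, forms a cycle in $G$ separating the two local sides of $e$ in the plane, contradicting that both sides belong to the same region bounded by $P$.

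For the second step, I would apply the handshake lemma to the connected component $C$ of $G \setminus \{e\}$ containing $u$. Every vertex of $G$ has degree $4$, and removing $e$ reduces only the degree of $u$ among vertices in $C$; thus $u$ has degree $3$ in $C$ while all other vertices of $C$ retain degree $4$. The sum of degrees in $C$ is therefore $3 + 4(|V(C)| - 1) = 4|V(C)| - 1$, which is odd --- contradicting the handshake lemma, which says that the sum of degrees in any finite graph equals twice the number of edges and is therefore even.

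The main obstacle is the first step: rigorously converting the combinatorial condition ``$e$ is used twice as a side of $P$'' into the graph-theoretic condition ``$e$ is a bridge.'' Once the bridge property is established, the parity argument via handshake is immediate and clean. I expect the cleanest exposition is via the planar-dual correspondence, though a direct topological argument using the Jordan curve theorem applied to a hypothetical $u$--$v$ path avoiding $e$ is also viable and perhaps more self-contained for the reader.
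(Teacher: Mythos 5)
Your proof is correct, and it reaches the conclusion by a genuinely different route than the paper, even though both ultimately rest on the same parity obstruction. The paper argues directly on the planar picture: if an edge $E$ occurred as two sides of $P$, the boundary of $P$ would split into two sub-polygons $P_a$ and $P_b$ joined by $E$; taking $P_a$ innermost, it counts the edge-ends at the vertices of $P_a$ pointing into the region bounded by $P_a$ and finds an odd number (one at the vertex where $E$ attaches externally, two at every other vertex), whereas these ends must pair up into edges, forcing an even count. Your argument factors the same obstruction through two standard facts: an edge occurring twice on the boundary of a face is a bridge (via planar duality, or the Jordan curve theorem applied to a cycle through $e$), and a graph in which every vertex has even degree has no bridge (handshake lemma on one component of $G \setminus \{e\}$). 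Your version is more modular and slightly more general --- it needs only that all degrees are even, not that they equal $4$ --- and it sidesteps the paper's reduction to an innermost sub-polygon together with its somewhat loosely stated claim that the internally incident edges ``pair up'' (which really requires the handshake count over all vertices in the interior region, not only those on $\partial P_a$). One small point worth making explicit in your write-up: a bridge is never a loop, so in the final degree count the endpoint $u$ genuinely loses exactly one from its degree; loops do occur in $4$-valent knot-diagram graphs (kinks), but they are already excluded at your first step, since a loop is itself a Jordan curve separating its two sides into different faces.
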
 

\begin{proof}
    Suppose it were true that $P$ is a polygon in $G$ and there exist two sides of $P$ that are the same edge in $G$. Let $w$ be the number of sides of $P$, and assume the sides of $P$ are consecutively labelled with integers $1, 2, \ldots, w$ respectively. Suppose Side $i$ and Side $j$ of $P$ are the same edge $E$ in $G$ with $i<j$. Since $G$ is $4$-valent, we cannot have $1$-valent vertex, so $i\neq (j-1)$, and thus $i<(j-1)$.  
    
    We would have $a$-sided polygon $P_a$ (with $a\leq (j-i-1)$) contained in a $b$-sided polygon $P_b$ (with $b\leq (w-(j-i-1))$), or the other way around, and the graph edge $E$ connects $P_a$ and $P_b$. See \reffig{SameSidesijInP} for example. Without loss of generality, assume $P_a$ is the innermost such polygon (see \reffig{Innermost} for example).

    Note that $P_a$ cannot have more than one side being identified to any other side of $P$ because otherwise, $P$ would have a different number of sides. Since $G$ is a $4$-valent graph and $E$ is externally incident to a vertex $V$ of $P_a$, there would be exactly one other edge $E'$ that is internally incident to $V$. Observe that the number of internal incident edges to each of the other vertices of $P_a$ is two, so the total number of edges that are internally incident to all vertices of $P_a$ would be an odd number. However, as the graph is $4$-valent, these internal incident edges must be pairwise connected up, which implies an even number of internal incident edges. Contradiction. 
\end{proof}

\begin{figure}[ht]
\import{figures/}{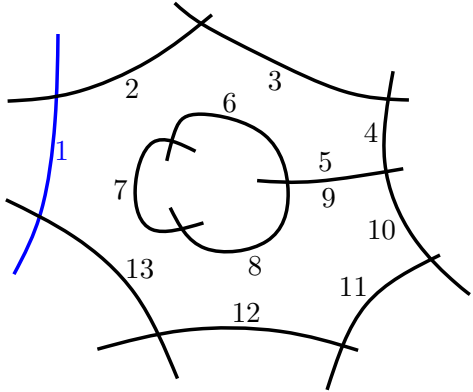}  
\caption{An imaginary example that helps explain the proof of \reflem{NoCommonEdge}: A $4$-valent subgraph with a $13$-sided polygon $P$. Side 5 and Side 9 share the same edge $E$, which connects a $3$-sided polygon $P_3$ and an $8$-sided polygon $P_8$. A planar $4$-valent graph cannot have such a subgraph because the number of internally incident edges for $P_3$ can only be even, not odd.}
\label{Fig:SameSidesijInP}
\end{figure}  

\begin{figure}[ht]
    \centering
\includegraphics[width=0.5\linewidth]{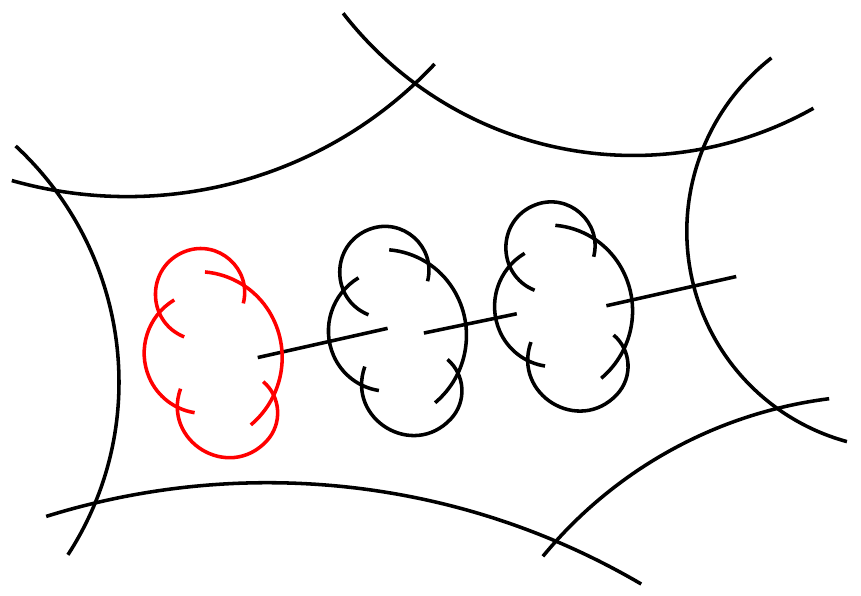}
    \caption{Another faux example highlighting in red the innermost polygon.}
    \label{Fig:Innermost}
\end{figure}

In the next lemma, we relate the number of edges and the number of vertices in a tangle graph.  

\begin{lemma} \label{Lem:RelatingVAndE}
    Let $G_T$ be an $(m,n)$-tangle graph. Let $v$, $e$, and $t$ be the number of vertices, number of non-sprout edges, and number of trivial arcs in $G_T$ respectively. We have 
    \[ 4v = 2e + m+n-2t.\]
\end{lemma}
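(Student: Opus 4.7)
The plan is to prove the identity by a standard double-counting argument, tallying edge-endpoints (half-edges) incident to vertices in two different ways.

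First I would introduce the auxiliary quantity $s$, the number of sprouts in the tangle graph $G_T$, and record a simple observation about boundary endpoints. Since $G_T$ is the underlying graph of an $(m,n)$-tangle diagram, its edges sit inside the closed disc $\mathcal{D}$ and their non-vertex endpoints are precisely the $m+n$ points where the tangle meets $\bdy \mathcal{D}$. Each sprout contributes exactly one such boundary endpoint, each trivial arc contributes exactly two, and each non-sprout edge contributes zero. Therefore
\[
s + 2t = m+n, \qquad \text{so} \qquad s = m+n-2t.
\]

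Next I would count the total number of edge-endpoints incident to vertices of $G_T$ in two ways. On the one hand, every vertex of $G_T$ comes from a crossing in the tangle diagram and is therefore $4$-valent, so the total number of edge-endpoints at vertices equals $4v$. On the other hand, grouping the contributions by edge type, each non-sprout edge contributes $2$ vertex-endpoints (both of its ends are vertices, possibly the same vertex in the case of a loop), each sprout contributes $1$ vertex-endpoint, and each trivial arc contributes $0$. Hence the total number of edge-endpoints at vertices equals $2e + s$. Equating the two counts and substituting $s = m+n-2t$ gives
\[
4v \;=\; 2e + s \;=\; 2e + m + n - 2t,
\]
which is the desired identity.

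The only subtlety is to check that the contribution of each non-sprout edge to the vertex-endpoint count is indeed $2$ even when the edge is a loop based at a single vertex; this is correct because a loop still uses two of the four slots at that vertex. Everything else is immediate from the definitions of sprout, non-sprout edge, and trivial arc given above the statement, and from the $4$-valence of tangle-graph vertices inherited from Definition~\ref{Def:TangDetourMove}. I do not anticipate a serious obstacle here — the lemma is essentially a handshake-type identity tailored to tangle graphs.
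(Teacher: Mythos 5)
Your proposal is correct and follows essentially the same double-counting argument as the paper: both establish that the number of sprouts is $m+n-2t$ and then count edge-endpoints at the $4$-valent vertices in two ways, with each non-sprout edge counted twice and each sprout once. Your explicit remark about loops based at a single vertex is a point the paper handles only with the parenthetical ``possibly the same,'' so nothing is missing.
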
  

\begin{proof}
Any $(m,n)$-tangle graph has $(m+n-2t)$ sprouts. 
Note that each vertex in $G_T$ has four incident edges (including sprouts). Each non-sprout edge is touching exactly two (possibly the same) vertices. By counting the four edges around every vertex (some edges are possibly the same), we count each non-sprout edge in $G_T$ twice and each sprout once. Therefore, we have $4v = 2e + (m+n-2t)$. 
\end{proof} 

From here on, the graph theoretic notations or terminology may be used interchangeably with the corresponding diagrammatic ones. For example, we may use the term \emph{edge in a diagram $D$} to refer to the corresponding arc in the diagram that gives an edge in the associated graph obtained by disregarding the over/under-crossing information in $D$.

For the purpose of showing the results in later sections, we define another special case of detour moves:

\begin{definition} \label{Def:LoopDetour}
    Let $D$ be an oriented knot diagram of a knot $K$. Let  $\gamma^\dagger$ be a subarc of $K$ such that 
    \begin{enumerate}
        \item $\gamma^\dagger$ has exactly one self-crossing in $D$, 
        \item when $\gamma^\dagger$ is traced according to the knot orientation, $\gamma^\dagger$ meets only undercrossings or only overcrossings in $D$ and these are the only crossings that $\gamma^\dagger$ meets other than the self-crossing, and
        \item the two one-valent edges incident to the self-crossing lie in the exterior of the disc bounded by the loop formed by $\gamma^\dagger$.
    \end{enumerate}
    Label the self-crossing of $\gamma^\dagger$ with a positive integer $u$. Following the orientation of the knot, label the crossings (if exist) that are distinct from $u$ with consecutive integers $(u+1), \ldots, v$ respectively.

    Let $\gamma$ be a subarc of $\gamma^\dagger$ such that one endpoint of $\gamma$ lies between Crossings $u$ and $u+1$, and the other endpoint of $\gamma$ lies between Crossings $v$ and $u$. A \emph{loop detour move} is a $(v-u,0)$-tangle detour move together with a Type~I Reidemeister move on Crossing $u$ that immediately follows.

    We call the sub-diagram consisting of the loop $\gamma^{\dagger}$ and the $(v-u,0)$-tangle before performing the loop detour move a \emph{loop-tangle diagram}. See \reffig{LoopTangleDiag} for example. 
\end{definition} 

\begin{figure}[ht]
    \centering
    \includegraphics[width=0.6\textwidth]{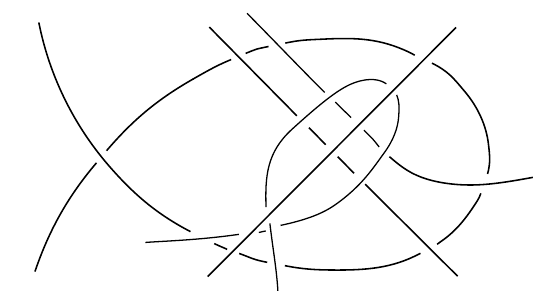}
    \caption{An example of a loop-tangle diagram}
    \label{Fig:LoopTangleDiag}
\end{figure}

\begin{lemma} \label{Lem:LoopCode}
    Performing a loop detour move on an ascending knot diagram $D$ removes $(v-u+1)$ crossing labels $u,u+1,\ldots,v-1,v$ from the Gauss code of $D$. Specifically, this move deletes the subword $u^{-1}(u+1)^{-1}\ldots v^{-1}u^{+1}$ corresponding to traversing the loop from crossing $u$ to itself, and thereby removes the labels $(u+1)^{+1},(u+2)^{+1},\ldots,(v-1)^{+1},v^{+1}$ from wherever they appear in the code. This leaves the order and sign of all other labels unchanged. 
\end{lemma}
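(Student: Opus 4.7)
The plan is to track the effect of the loop detour move—defined in \refdef{LoopDetour} as a $(v-u,0)$-tangle detour move on $\gamma$ followed by a Type~I Reidemeister move on the kink at $u$—on the Gauss code one letter at a time, using \reflem{GaussCodesForAscending} to control every sign.

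First I would identify the block of the Gauss code that corresponds to traversing $\gamma^{\dagger}$ from the first meeting of $u$ to the second. By the labelling convention in \refdef{LoopDetour} and the endpoint specification for $\gamma$ (between Crossings $u$ and $u+1$ on one side, and between Crossings $v$ and $u$ on the other), the crossings met along $\gamma$ are $u+1, u+2, \ldots, v$ in this order, with no other crossings in between. The subword $u^{-1}(u+1)^{-1}\cdots v^{-1}u^{+1}$ stated in the lemma places us in the undercrossing sub-case of condition~(2) of \refdef{LoopDetour}, so each encounter along $\gamma$ is an undercrossing; \reflem{GaussCodesForAscending} then identifies each such letter as a first appearance, contributing $(u+j)^{-1}$. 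Ascendingness similarly forces the first encounter with the self-crossing $u$ to be $u^{-1}$, and the second, after traversing the loop, to be $u^{+1}$. Concatenating these yields the desired contiguous subword. The second appearances of the labels $u+1,\ldots,v$ sit at the remaining incidences of those crossings, all inside the tangle $T$, and hence appear as $(u+1)^{+1},\ldots,v^{+1}$ somewhere later in the code.

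Next I would analyse the two stages of the move in turn. The $(v-u,0)$-tangle detour move is an ambient isotopy of $\gamma$ supported in the disc $\mathcal{D}$ it sweeps; by \refdef{TangDetourMove}, after the move $\gamma$ has $n=0$ crossings with $T$. Outside $\mathcal{D}$ no crossing is touched, so the cyclic order and the over/under sign of every surviving letter is preserved. The crossings that disappear are exactly $u+1,\ldots,v$, so the letters $(u+1)^{-1},\ldots,v^{-1}$ inside the loop block and the letters $(u+1)^{+1},\ldots,v^{+1}$ wherever they sit elsewhere are simultaneously deleted, and the loop block collapses to the consecutive pair $u^{-1}u^{+1}$. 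The subsequent Type~I Reidemeister move on the kink at $u$—available because condition~(3) of \refdef{LoopDetour} places all other strands outside the loop disc—deletes these two letters and removes the label $u$ itself. Combining the two stages, the entire subword $u^{-1}(u+1)^{-1}\cdots v^{-1}u^{+1}$ is excised and each $(u+j)^{+1}$ for $j=1,\ldots,v-u$ is removed from its position elsewhere in the code, with the order and sign of every other letter unchanged, exactly as claimed.

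The principal obstacle, once ascendingness has pinned down the signs along the loop, is the bookkeeping needed to certify that nothing outside the moved arc changes. This reduces to verifying that the isotopy realising the $(v-u,0)$-tangle detour move is confined to $\mathcal{D}$ and introduces no new crossings, which is immediate from \refdef{TangDetourMove} together with condition~(3) of \refdef{LoopDetour}; the Type~I move on the kink at $u$ likewise has no effect outside a small neighbourhood of $u$. Once these containment properties are in place, the claimed rewrite of the Gauss code is a direct symbolic substitution.
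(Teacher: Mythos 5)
Your proposal is correct and rests on the same core observation as the paper's (much terser) proof: the loop detour move eliminates exactly the crossings $u, u+1, \ldots, v$ from the diagram, so every letter carrying one of those labels is deleted from the Gauss code while everything else is untouched. Your additional bookkeeping---using ascendingness to pin down the signs in the subword $u^{-1}(u+1)^{-1}\cdots v^{-1}u^{+1}$ and splitting the move into the tangle detour stage and the Reidemeister~I stage---is a sound elaboration of details the paper dispatches with ``no matter what superscripts they have.''
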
  

\begin{proof}
    After the loop detour move, the self-crossing of the loop and all other crossings on the loop will disappear. This corresponds to the removal of the $(v-u+1)$ crossing labels $u,u+1,\ldots,v-1,v$, no matter what superscripts they have, in the Gauss code. 
\end{proof}

Typically, after performing such a move, the crossings will be relabelled such that the crossing numbering is once again consecutive from 1 to the number of crossings; i.e. after removing crossings $u,\ldots,v$ from a diagram with $n$ crossings, relabel $v+1,\ldots,n$ with $u,\ldots, n-(v-u+1)$ respectively.

\section{The unknotting robot}\label{Sec:UnknottingKnots}

In this section, we provide the Flatlander's combinatorial proof of \refthm{UnknottingKnots} in two parts: \refprop{PostRobotIsAscending} and \refthm{AscendingIsUnknot}. In particular, \refthm{AscendingIsUnknot} is a new combinatorial proof that ascending knot diagrams are unknotted. We also extend the robot to links.

\begin{definition}
    Given an oriented knot diagram  $D$ of a knot $K$ in the $3$-sphere, consider a \textit{robot} placed at a starting point $b$. The robot then follows the knot consistent with the orientation.
    
    The robot operates on crossings as it meets them in the following way (see \reffig{ActionofRobot}): 
        \begin{enumerate}
            \item When the robot meets an overcrossing, it does nothing to the crossing. 
            \item When the robot meets an undercrossing, it modifies the crossing to be an overcrossing in the direction of travel. 
        \end{enumerate} 

    The robot terminates when it returns to $b$. As a convention, we allow the robot to number the crossings as it meets them for the first time. Note also that the robot may be run \emph{backward} from a given starting point $b$ on a diagram, and follow the knot counter to the orientation; this is equivalent to running the robot on the reverse orientation diagram. 
\end{definition}

\begin{remark}\label{rmk:1,1_tangle_view}
    The starting point $b$ may be moved to infinity in $\SS^2$ by planar isotopy to consider the diagram instead as a $(1,1)$-tangle diagram. This view is used in Section~\ref{Sec:Simplification}. This is purely for convenience and does not affect the result in, for example, $\RR^3$ where that specific planar isotopy is not available.
\end{remark} 

One can also consider a \emph{descending robot}, which is the same in all respects except that it performs the opposite actions at crossings; however, all robots considered in this paper will be as above unless specified.

\begin{definition}
     The diagram obtained by applying the robot to a given oriented knot diagram is the \emph{post-robot diagram}.  
\end{definition}

\begin{lemma}\label{Lem:action_of_robot}
    Consider the post-robot diagram $D$ of an oriented knot diagram with starting point $b$. Then the Gauss code based at $b$ read in the direction the robot was applied is in a form such that every first instance of a crossing $x$ is an undercrossing and the second, an overcrossing.
\end{lemma}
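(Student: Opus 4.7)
The plan is a local case analysis at each crossing. The robot traverses the knot exactly once and visits each crossing exactly twice, and each of its actions modifies only the crossing currently being passed through. Thus the final over/under data at a fixed crossing $x$ depends only on the two visits the robot makes at $x$ together with the initial assignment of strands at $x$, and I can analyze each crossing independently. After the analysis, I will translate the local strand data into a statement about the Gauss code using Definition~\ref{Def:GaussCode}.

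Fix an arbitrary crossing $x$ and label its two strands $A$ and $B$, where $A$ is the strand traversed on the robot's first visit to $x$ and $B$ the strand traversed on its second visit. I will split into two cases.

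In the first case, $A$ is the overstrand at $x$ in the input diagram. Then by Algorithm~\ref{Alg:Robot} the robot does nothing on its first visit, so $B$ is still the understrand when the robot arrives on $B$ for the second visit; the robot then switches the crossing, leaving $A$ as the understrand and $B$ as the overstrand in the post-robot diagram. In the second case, $A$ is the understrand at $x$ initially. The robot then switches $x$ on its first visit (so $A$ becomes over and $B$ becomes under), and on the second visit it encounters $B$ as an understrand and switches again, so the final state has $A$ understrand and $B$ overstrand. In both cases the post-robot diagram places $A$ under and $B$ over at $x$.

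Finally, I would invoke Definition~\ref{Def:GaussCode}: reading the Gauss code based at $b$ in the direction the robot travelled records, at each visit to $x$, the superscript $-1$ if that visit is along an understrand and $+1$ along an overstrand. The preceding paragraph shows that the first visit (on $A$) is an undercrossing and the second visit (on $B$) an overcrossing, so the two occurrences of the label of $x$ are $x^{-1}$ then $x^{+1}$. Since $x$ was arbitrary, this yields the lemma. The main (and only) subtlety is really the second case, where one must notice that switching the crossing twice returns it to its original configuration while also ensuring that the robot, now on $B$, still encounters an understrand on its second pass; everything else is mechanical.
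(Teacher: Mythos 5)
Your proof is correct and follows essentially the same approach as the paper's: a local case analysis at each crossing showing that after the robot's second visit the strand of that visit is the overstrand, hence the Gauss code records under then over. The only cosmetic difference is that the paper conditions on the state encountered at the second visit (two cases, done), whereas you condition on the initial state at the first visit and track both passes; both are valid and reach the same conclusion.
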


    \begin{proof}
           
        Consider when the robot meets a crossing $x$ for the second instance. If it encounters $x$ as an overcrossing, it does not alter $x$. If it encounters $x$ as an undercrossing in the second instance, it switches the crossing $x$. Note that each crossing is traversed by the robot exactly twice, and each crossing label appears exactly twice in a Gauss code, once as an overcrossing and once as an undercrossing. Hence, in the Gauss code read in the direction the robot was applied, the second instance of every crossing is an overcrossing and the first instance of every crossing is an undercrossing. 
    \end{proof}

\begin{proposition} \label{Prop:PostRobotIsAscending} 
    Applying the robot to an oriented knot diagram $D$ from a starting point $b$ produces a diagram that ascends from $b$ in the forward direction (and consequently descends in the backward direction). 
\end{proposition}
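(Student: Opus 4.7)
The plan is to derive \refprop{PostRobotIsAscending} as an immediate consequence of \reflem{action_of_robot} combined with \reflem{GaussCodesForAscending}. Let $D'$ denote the post-robot diagram obtained from $D$ by applying the robot from the starting point $b$. By \reflem{action_of_robot}, the Gauss code of $D'$ based at $b$ and read in the forward direction has the property that every first instance of each crossing label is an undercrossing and every second instance is an overcrossing. But this is exactly the Gauss-code characterization of ascending diagrams provided by \reflem{GaussCodesForAscending}. Hence $D'$ ascends from $b$ in the forward direction.

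For the parenthetical claim about the backward direction, I would note that running the knot from $b$ against its orientation is equivalent to reading the Gauss code of $D'$ in reverse (equivalently, recording the Gauss code of $D'$ with the reversed orientation based at $b$). Under this reversal, whatever appeared as the second instance of a crossing label in the forward reading now appears as the first instance, and vice versa. Since the forward code had each label appearing first as $x^{-1}$ and then as $x^{+1}$, the reversed code has each label appearing first as $x^{+1}$ and then as $x^{-1}$. By the descending case of \reflem{GaussCodesForAscending}, this is precisely the condition that $D'$ descends from $b$ in the backward direction.

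I do not anticipate any substantive obstacle: once \reflem{action_of_robot} and \reflem{GaussCodesForAscending} are in place, the proposition amounts to a translation between two equivalent characterizations phrased in terms of Gauss codes. The only minor point worth spelling out for completeness is that traversing $D'$ against its orientation corresponds cleanly to reversing the letter order of the forward Gauss code (while preserving each superscript $\epsilon$), which is immediate from \refdef{GaussCode}.
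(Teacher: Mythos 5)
Your proposal is correct and follows essentially the same route as the paper: both invoke Lemma~\ref{Lem:action_of_robot} to obtain the Gauss-code form of the post-robot diagram and then apply Lemma~\ref{Lem:GaussCodesForAscending} (in its ascending and descending cases) to conclude the forward and backward claims, respectively. Your additional remark that reversing the traversal reverses the letter order while preserving superscripts is a slightly more explicit justification of the backward case than the paper gives, but it is the same argument.
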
 

\begin{proof}
    By \reflem{action_of_robot}, the Gauss code of a post-robot diagram $D$ is organised such that the first appearance of a given crossing is an undercrossing and the second is an overcrossing. By \reflem{GaussCodesForAscending}, the diagram $D$ is ascending from $b$.  

    If the Gauss code is read from $b$ in the reverse direction, crossings are met first as overcrossings then as undercrossings. Again by \reflem{GaussCodesForAscending}, $D$ is descending from $b$.
    \end{proof}

Note again that an ascending diagram depends only on the order crossings are met (determined by the initial diagram), the starting point, and the direction travelled; see \reffig{basepoint-example}. In applying the robot to a given diagram, we can make an appropriate choice of starting point and direction to obtain any ascending diagram with that initial diagram. 

\begin{figure} 
    \centering
    \includegraphics[width=0.7\textwidth]{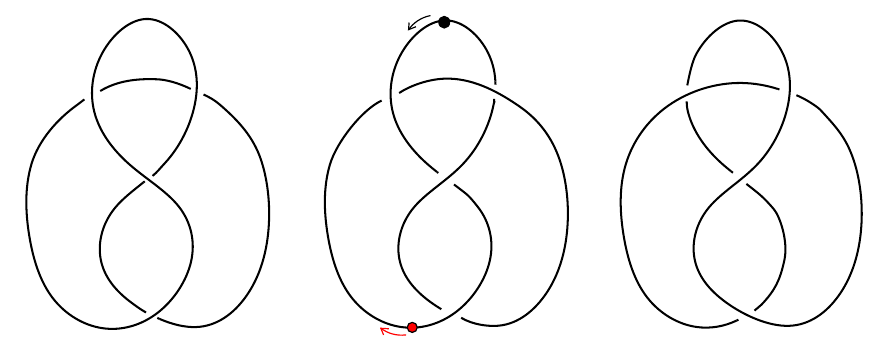}
    \caption{Left figure shows the result of running the robot from the red dot in the direction shown. Right figure shows the result of running the robot from the black dot.}
    \label{Fig:basepoint-example}
\end{figure}

\begin{theorem}\label{Thm:AscendingIsUnknot}
    An ascending or descending knot diagram can be transformed into the zero-crossing unknot diagram by a finite sequence of loop detour moves. 
\end{theorem}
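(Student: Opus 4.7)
The plan is to induct on the number of crossings $n$ of the given ascending knot diagram $D$. The base case $n = 0$ is immediate since $D$ is already the zero-crossing unknot diagram; the descending case reduces to the ascending case by reversing the orientation of the knot. For the inductive step I locate a loop-tangle subdiagram inside $D$ and apply the corresponding loop detour move, which strictly reduces the number of crossings while preserving ascendingness.

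To locate the loop-tangle subdiagram I examine the Gauss code $\omega$ of $D$ based at the ascending starting point, and let $u^{+1}$ be the \emph{first} letter in $\omega$ that carries a $+1$ superscript. Such a letter exists because $D$ has at least one crossing, and by \reflem{GaussCodesForAscending} the ascending property forces $u^{-1}$ to occur somewhere before $u^{+1}$. By the choice of $u^{+1}$ as the first overcrossing letter, every letter strictly between $u^{-1}$ and $u^{+1}$ carries a $-1$ superscript. After relabelling the intermediate crossings as $u+1, \dots, v$ in order of appearance, the corresponding portion of $\omega$ reads exactly $u^{-1}(u+1)^{-1}\cdots v^{-1}u^{+1}$, so the subarc $\gamma^\dagger$ realising this subword (extended slightly past $u$ on each side) has a single self-crossing at $u$ and meets the rest of $D$ only as undercrossings at $u+1, \dots, v$.

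Next I verify the three conditions of \refdef{LoopDetour} for $\gamma^\dagger$. Conditions (1) and (2) are immediate from the Gauss-code analysis above. Condition (3), that the two one-valent stubs at $u$ lie in the exterior of the disc bounded by the loop, is the step I expect to be the main obstacle. I plan to handle it by a local-to-global argument: locally at $u$, the four strands divide a small neighbourhood into four quadrants, with the two loop arcs occupying one opposite pair and the two stubs occupying the other pair, so the stubs begin on the exterior side of the loop. Globally, neither stub can re-enter the interior without crossing the loop, but the loop's only crossings with the rest of $D$ occur at $u, u+1, \dots, v$, all of which are already accounted for; hence neither stub ever crosses back inside.

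Having established a valid loop-tangle subdiagram, I apply the corresponding loop detour move. By \reflem{LoopCode}, this deletes the subword $u^{-1}(u+1)^{-1}\cdots v^{-1}u^{+1}$ from $\omega$ and also removes every occurrence of $(u+1)^{+1}, \dots, v^{+1}$ from the remainder of $\omega$, while preserving the order and signs of all other letters. For every surviving crossing label, its undercrossing letter therefore still precedes its overcrossing letter, so by \reflem{GaussCodesForAscending} the resulting diagram is again ascending from the same starting point. Since the number of crossings decreases by $v - u + 1 \ge 1$, the inductive hypothesis applies and completes the proof.
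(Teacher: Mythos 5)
Your overall strategy --- take the first letter with a $+1$ exponent in the Gauss code, observe that everything strictly between $u^{-1}$ and $u^{+1}$ is an undercrossing letter so that the arc realising this subword is a loop-tangle, apply the loop detour move, use \reflem{LoopCode} to see that the surviving code is still ascending, and induct on the crossing number --- is exactly the argument in the paper's proof of \refthm{AscendingIsUnknot} (phrased there as an iteration terminating at the empty Gauss code rather than as an induction, and with the descending case handled by interchanging the words ``over'' and ``under'' rather than by reversing orientation; both differences are cosmetic).

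The one place where your write-up goes wrong is the verification of condition (3) of \refdef{LoopDetour}, which you correctly single out as the delicate point. Your local picture is incorrect: at the self-crossing $u$, each branch of the crossing consists of one end of the loop together with one stub (the curve enters along a stub and exits into the loop at the first visit, and enters along the loop and exits into a stub at the second visit), so the two loop-ends occupy an \emph{adjacent} pair of the four rays and the two stubs occupy the complementary adjacent pair --- not opposite pairs. Consequently both stubs lie, locally, on a single side of the loop, but \emph{which} side (interior or exterior of the bounded disc) is not determined locally, and your conclusion that they ``begin on the exterior side'' is false in general: if the first-return loop encircles the starting point $b$ (already possible for a one-crossing diagram whose kink is the outer lobe), the stubs begin inside the bounded disc and condition (3) fails as literally stated. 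The correct observation is that the arc from $b$ to the first visit of $u$ meets the loop only at $u$ (any other intersection would be a crossing repeated before position $v+1$ of the code, contradicting the choice of $u$), so both stubs lie on the same side of the loop as $b$; one then either works on $\SS^2$ and takes ``the disc bounded by the loop'' to be the one not containing $b$, or first planar-isotopes $b$ into the unbounded region (equivalently, views the diagram as a $(1,1)$-tangle with $b$ at infinity as in Remark~\ref{rmk:1,1_tangle_view}). With that repair your proof is complete and coincides with the paper's.
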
 

\begin{proof} 
 
    We will prove the case for ascending knot diagrams. 

    Let $D$ be a knot diagram that ascends from a starting point $b$. Let $n$ be the number of crossings in $D$. Since $D$ is ascending, by \reflem{GaussCodesForAscending}, the first and second appearances of each crossing label are under and over, respectively. By convention, the robot labels each of the $n$ crossings in order as it meets them for the first time. Then there exists a first crossing $u$, $1\leq u \leq n$, traversed twice by the robot after meeting $v$ crossings, $u\leq v \leq n$, in order. The resulting Gauss code is of the form $1^{-1}2^{-1}\ldots u^{-1}\ldots v^{-1}u^{+1}\ldots$; note that this does not exclude the possibility that $u=v$, in which case the loop detour move is simply a standard Reidemeister I move that removes the kink represented by the subcode $u^{-1}u^{+1}$.

    Suppose there are other letter(s) between $u^{-1}$ and $u^{+1}$. The Gauss code is then of the form which allows the application of a loop detour move on the loop starting at crossing $u$. By \reflem{LoopCode}, the resulting code is either $1^{-1}2^{-1}\ldots (u-1)^{-1}(v+1)^{-1}\ldots$ or $1^{-1}2^{-1}\ldots w^{-1}\ldots (u-1)^{-1}w^{+1}\ldots$ for some $w<u$, as the labels $u,\ldots, v$ are removed and the next traversed crossing is either a new crossing traversed for the first time or a previous crossing traversed for the second time. 
    
    After performing the loop detour move, the number of crossings in the new diagram is at least one less than that in the original diagram. We relabel crossings $(v+1),\ldots, n$ with $u,\ldots, n-(v-u+1)$, respectively. This yields a Gauss code that is again composed of consecutive integers from $1$ to the number of crossings in the diagram, arranged such that the first instance of each crossing is an undercrossing. Then there exist labels $u'$ and $v'$ that fill the roles of $u$ and $v$ above; e.g. when the resultant Gauss code is of form $1^{-1}2^{-1}\ldots w^{-1}\ldots (u-1)^{-1}w^{+1}\ldots$, then $u'=w$ and $v'=(u-1)$. Again a loop detour (or Reidemeister I) move may be applied that removes at least one crossing. As the number of crossings $n$ is finite, we can obtain an empty Gauss code after applying finitely many loop detour moves, i.e. all crossings are removed and the resulting diagram is a zero-crossing unknot diagram. Thus, any ascending diagram depicts a knot that is ambient isotopic to the unknot.

    The argument is identically structured for descending diagrams generated by applying a descending robot numbering crossings as it meets them.  The only modifications required are interchanging the words ``undercrossing'' and ``overcrossing'' wherever they appear, swapping the index of each label in each Gauss code to match, and replacing each instance of ``ascending'' with ``descending''.
\end{proof}

\subsection{The unknotting robot unlinks links}\label{Sec:UnlinkingLinks} 

From \refthm{UnknottingKnots}, we learned that the robot unknots knots in any given knot diagram. A natural question is: What about links? Does the robot unlink links? Our short answer is yes, \refcor{UnknottingLinks} in this section provides a more precise statement. Before showing the argument, let us clarify what we meant by an unlink and unlinking a component as follows.   

\begin{definition}
Let $n$ be a positive integer. An $n$-component \emph{unlink} in the $3$-sphere $\SS^3$ is a union of $n$ circles $\{C_i:i\in\ZZ\cap[1,n]\}$ embedded in $\SS^3$ such that for each $j\in\ZZ\cap[1,n]$, $C_j$ is isotopically trivial in the complement of all other link components $\SS^3\setminus\bigcup_{i\in\ZZ\cap[1,n]\setminus j}C_i$. 

We say a link component $C_j$ is \emph{unlinked from all other components} if and only if $C_j$ is homotopically trivial in $\SS^3\setminus\bigcup_{i\in\ZZ\cap[1,n]\setminus j}C_i$. 
\end{definition} 

Note that, as the robot follows a knot diagram, it never makes a ``jump'' at a crossing - it exits on the same strand as it approached the crossing. Thus, in a link diagram, the robot is confined to a single component.
Hence, applying the robot to an oriented link diagram involves a choice of starting point for each component and the sequence of components that the robot is applied to. 

\begin{lemma}\label{Lem:OneComponent}
    Applying the robot to a link component diagram $D_0$ of an oriented link diagram $D$ unknots the link component corresponding to $D_0$ and unlinks it from all other components.
\end{lemma}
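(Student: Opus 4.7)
The plan is to separate the robot's action on self-crossings of $D_0$ from its action on mixed crossings of $D_0$ with the other components, derive the unknottedness of $D_0$ from \refthm{AscendingIsUnknot}, and derive the unlinking from a direct vertical-lift argument in $3$-space.

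First I would observe that, because the robot is confined to $D_0$, it meets each self-crossing of $D_0$ exactly twice (once per strand) and each mixed crossing between $D_0$ and another component exactly once. The twice-visited self-crossings behave exactly as in \reflem{action_of_robot}: in the post-robot diagram, the first visit to each self-crossing is an undercrossing and the second is an overcrossing, so the diagram of $D_0$ considered on its own ascends from $b$. At each once-visited mixed crossing, the robot either does nothing (if $D_0$ was already over) or switches the crossing (if $D_0$ was under), so in the post-robot diagram $D_0$ is the overstrand at every mixed crossing.

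For the unknottedness, I would use that the isotopy type of $D_0$ as a knot in $\SS^3$ depends only on its own projection and self-crossing data, since the other components contribute no self-crossings of $D_0$; applying \refthm{AscendingIsUnknot} to this isolated ascending diagram yields that $D_0$ is isotopic to the unknot. For the unlinking claim, the fact that $D_0$ passes over every other component at every mixed crossing lets me construct an ambient isotopy of $\SS^3$, supported in an arbitrarily thin tubular neighborhood of $D_0$ that avoids the other components, which lifts $D_0$ in a chosen height direction by an arbitrarily large amount while fixing every other component pointwise. At each mixed crossing $D_0$'s height already exceeds that of the other strand, and at non-crossing points the projections of $D_0$ and any other component are disjoint in the plane, so the lift never forces $D_0$ through another component. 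After a sufficiently large lift, $D_0$ lies in a half-space disjoint from all other components and this half-space is simply connected, so $D_0$ is null-homotopic in $\SS^3$ minus the other components. By the definition preceding the lemma, this is exactly the statement that $D_0$ is unlinked from every other component.

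The main obstacle I expect is in formulating the lift cleanly as a genuine ambient isotopy of $\SS^3$: one needs to extend the vertical shift defined on a tubular neighborhood of $D_0$ to all of $\SS^3$ via a bump function while keeping the other components pointwise fixed, and to verify that the resulting family of diffeomorphisms is continuous in the isotopy parameter and never causes $D_0$ to intersect another component at an intermediate time. A useful sanity check for the argument is the observation that, because the lifted configuration has vanishing linking number between $D_0$ and any other component $C_i$, the signed sum of mixed crossings between $D_0$ and $C_i$ in the post-robot diagram must already equal zero, which is consistent with the null-homotopy conclusion.
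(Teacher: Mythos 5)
Your proposal is correct and follows essentially the same route as the paper: observe that the robot, being confined to $D_0$, turns every mixed crossing into an overcrossing for $D_0$ (so $D_0$ lies entirely above the other components and can be lifted off them), and that the self-crossings are handled exactly as in the knot case, so the component's own diagram is ascending and hence unknotted by \refthm{AscendingIsUnknot}. The paper simply asserts the ``lies completely above, hence unlinked'' step where you spell out the vertical-lift isotopy; that extra detail is harmless but not a different argument.
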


    \begin{proof}
        Let $D_1$ be any link component diagram of $D$ other than $D_0$. When the robot reaches a crossing $x$ between the components $D_0$ and $D_1$, it switches $x$ such that $D_0$ crosses over (if $D_0$ already crosses over then the robot does nothing). As the robot is confined to a single component, it never encounters $x$ from $D_1$, and $x$ remains an overcrossing from $D_0$. Hence, $D_0$ will lie completely above all other components after the robot operation and is thus unlinked from all other components. 

        By applying \refthm{UnknottingKnots} to the link component diagram $D_0$, the robot produces 
        a knot diagram that corresponds to an unknot. 
    \end{proof}

\begin{lemma} \label{Lem:Stacking}
    Let $L$ be an $n$-component link and $L_1$, \ldots , $L_n$ be the link components of $L$. Let $D$ be a link diagram of $L$ and $D_1$, \ldots , $D_n$ be the corresponding link component diagrams. Suppose the robot acts on each of the link component diagrams in the order $D_1$, \dots , $D_n$. Then, we obtain a stack of knot diagrams with order (from bottom to top) $L_1$, \ldots , $L_n$.  
\end{lemma}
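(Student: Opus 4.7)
The plan is to proceed by induction on $k$, maintaining the invariant that after the robot has acted on $D_1,\dots,D_k$ in that order, the following hold: (a) each $D_i$ with $i\leq k$ is an unknot diagram, and (b) for every pair $i<j\leq k$, at every mixed crossing between $D_i$ and $D_j$ the $D_j$-strand passes over the $D_i$-strand. Taking $k=n$ then gives exactly the stacking order claimed, with $L_1$ at the bottom and $L_n$ at the top.

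The base case $k=1$ is immediate from \reflem{OneComponent} applied to $D_1$: the robot turns $D_1$ into an unknot lying above all other components, so in particular above $D_j$ for each $j>1$.

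For the inductive step I would isolate two features of the robot acting on $D_{k+1}$. First, the robot is confined to $D_{k+1}$, so it only alters crossings that are either self-crossings of $D_{k+1}$ or mixed crossings involving $D_{k+1}$. In particular, self-crossings of every $D_i$ with $i\leq k$ and mixed crossings between pairs $D_i,D_j$ with $i,j\leq k$ are left untouched, so properties (a) and (b) for indices up to $k$ survive this round. Second, \reflem{OneComponent} applied to $D_{k+1}$ ensures that after this round $D_{k+1}$ is an unknot and lies completely above every other component, and in particular over each $D_i$ with $i\leq k$. Combining these two facts extends the invariant from $k$ to $k+1$, completing the induction.

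The only delicate point I anticipate is articulating clearly that the robot acting on $D_{k+1}$ cannot break the relative order already established among $D_1,\dots,D_k$; but this is a direct consequence of the fact that the robot never jumps from one component to another, so every crossing not incident to $D_{k+1}$ is literally fixed. With that observation in hand, the argument becomes a straightforward bookkeeping of what each successive invocation of \reflem{OneComponent} changes and what it preserves, and the conclusion at $k=n$ is exactly the desired stack $L_1,\dots,L_n$ from bottom to top.
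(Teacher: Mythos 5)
Your proposal is correct and follows essentially the same route as the paper: the paper's proof likewise observes that each robot pass pushes the current component to the forefront (via the mechanism of \reflem{OneComponent}) while leaving all crossings not incident to that component untouched, so applying the robot in the order $D_1,\dots,D_n$ yields the stack $L_1,\dots,L_n$ from bottom to top. Your version merely makes the induction and the preserved invariant explicit, which is a slightly more careful write-up of the same argument.
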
 

\begin{proof}
    Suppose the robot acts on a link component diagram $D_i$.
    When the robot comes across a crossing $c$ with
    a strand coming from a distinct link component diagram $D_j$, it will meet $c$ exactly once along $D_i$, resulting in $D_i$ being positioned above $D_j$ at $c$. 
    
    As a result, 
    the robot pushes the diagram $D_i$ to the forefront. The corresponding link component $L_i$ then becomes unlinked from the rest of the diagram. After applying the robot to each link component diagram in the order $D_1$, \ldots , $D_n$, we obtain a stack of component diagrams in disjoint projection planes, and they have order $L_1$, \ldots, $L_n$ from bottom to top. 
\end{proof} 

\begin{figure}[ht]
$$\vcenter{\hbox{\begin{overpic}[scale = 1]{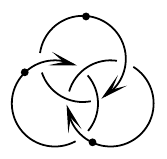}
\put(41, 72){$1$}
\put(39, -4){$2$}
\put(5, 45){$3$}
\end{overpic} }}
  \xrightarrow[]{Robot_1} \
\vcenter{\hbox{\begin{overpic}[scale = 1]{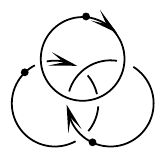}
\put(41, 72){$1$}
\put(39, -4){$2$}
\put(5, 45){$3$}
\end{overpic} }}     \xrightarrow[]{Robot_2} \
\vcenter{\hbox{\begin{overpic}[scale = 1]{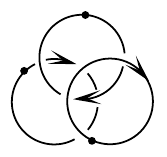}
\put(41, 72){$1$}
\put(39, -4){$2$}
\put(5, 45){$3$}
\end{overpic} }}
  \xrightarrow[]{Robot_3} \
\vcenter{\hbox{\begin{overpic}[scale = 1]{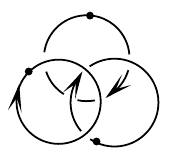}
\put(41, 72){$1$}
\put(39, -4){$2$}
\put(5, 45){$3$}
\end{overpic} }}$$
    \caption{Illustration of the robot's action, from left to right, on the components of a link.}
    \label{fig:robotonlinkcomponents}
\end{figure}

\begin{proposition} \label{Prop:PostRobotIsAscendingLinkCpn} 
    If we apply the robot to each link component of a link in an oriented link diagram, then the robot produces a link diagram with link components lying in disjoint parallel projection planes, each projection plane contains exactly one link component.  The post-robot diagram of each link component diagram $D_i$ ascends from its starting point $b_i$ in the forward direction and descends in the backward direction. 
\end{proposition}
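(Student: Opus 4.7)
The plan is to treat the two assertions of the proposition separately. The stacking claim — that after the robots run in order $D_1,\ldots,D_n$ the components lie in disjoint parallel projection planes — is precisely \reflem{Stacking}, so I would cite it directly. The second assertion, that each post-robot component diagram $D_i$ ascends from $b_i$ in the forward direction (and descends in the backward direction), is where the real work lies.

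For the ascending claim I would proceed in two steps. First, I would view each $D_i$, after all robot operations, as a knot diagram in its own projection plane whose crossings are exactly the self-crossings of $D_i$; by \reflem{Stacking}, the inter-component crossings have been resolved into stacking data between parallel planes and no longer appear as crossings of the planar diagram of $D_i$. Second, at the moment the robot finishes acting on $D_i$ itself, \reflem{action_of_robot} (whose argument depended only on the robot's behaviour at crossings it traverses from both strands, i.e. the self-crossings of the component it follows) gives that the self-crossing Gauss code of $D_i$ based at $b_i$ has every first appearance as an undercrossing and every second appearance as an overcrossing. \reflem{GaussCodesForAscending} then yields that $D_i$ is ascending from $b_i$, with the backward-direction descending claim following by reading the Gauss code in reverse, exactly as in \refprop{PostRobotIsAscending}.

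I would then verify an invariance claim: subsequent robot actions on $D_{i+1},\ldots,D_n$ do not disturb the ascending property already established for $D_i$. This holds because the robot is confined to a single component during each application — it never switches components at a crossing — so an action on $D_j$ with $j>i$ can only alter self-crossings of $D_j$ or inter-component crossings involving $D_j$, and never self-crossings of $D_i$. Hence the self-crossing Gauss code of $D_i$ is frozen once the robot on $D_i$ terminates, and the ascending property persists through the remaining operations.

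The main obstacle I anticipate is articulating cleanly why the ``ascending'' property for a link component diagram is captured by its self-crossing Gauss code alone, rather than by a Gauss code that also records inter-component crossings. This requires invoking \reflem{Stacking} first to place each component in its own projection plane; once this is granted, the definition of ascending knot diagram applies to $D_i$ in that plane and \reflem{GaussCodesForAscending} may be used without modification, reducing the link case to the knot case handled by \refprop{PostRobotIsAscending}.
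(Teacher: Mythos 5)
Your proposal is correct and follows essentially the same route as the paper, which likewise cites \reflem{Stacking} for the separation into parallel planes and then invokes \refprop{PostRobotIsAscending} (itself built on \reflem{action_of_robot} and \reflem{GaussCodesForAscending}) for the ascending claim. Your additional verification that later robot runs on $D_{i+1},\ldots,D_n$ cannot disturb the self-crossings of $D_i$ is a point the paper leaves implicit, but it is an elaboration of the same argument rather than a different approach.
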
 

\begin{proof} 
    Let $D$ be an oriented link diagram of a link $L$. Suppose the robot is applied to a link component diagram $D_i$ of the link component $L_i$ from a starting point $b_i$ in $D$.  

   By \reflem{Stacking}, we separate all link components of $L$ into disjoint parallel projection planes. 

    By \refprop{PostRobotIsAscending}, the post-robot diagram of each $D_i$ ascends from $b_i$ in the forward direction and descends in the backward direction. 
\end{proof} 

\begin{theorem}\label{Thm:AscendingIsUnlink}
    A link diagram that consists of ascending or descending link component diagrams in disjoint parallel projection planes is a diagram of an unlink. 
\end{theorem}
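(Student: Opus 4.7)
My plan is to reduce the statement to $n$ applications of \refthm{AscendingIsUnknot}, one per link component, using the geometric hypothesis to guarantee that these applications do not interact with one another.

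First, I would unpack the geometry. Since the link component diagrams $D_1, \ldots, D_n$ lie in pairwise disjoint parallel projection planes $P_1, \ldots, P_n$, each component $L_i$ of the underlying link $L$ can be confined to a thin slab $S_i$ about $P_i$ (thick enough to accommodate the small height perturbations used to realize the self-crossings of $D_i$), and the slabs $S_1, \ldots, S_n$ can be chosen pairwise disjoint. In particular, $L$ is already a split link: each pair of components is separated by a plane parallel to the projection planes, so there are pairwise disjoint closed $3$-balls $B_1, \ldots, B_n$ with $L_i \subset B_i$.

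Second, I would apply \refthm{AscendingIsUnknot} to each $D_i$ separately, viewing it as the knot diagram of the component $L_i$. This supplies a finite sequence of loop detour moves reducing $D_i$ to the zero-crossing unknot diagram inside $P_i$. The essential observation is that, by \refdef{LoopDetour}, each loop detour move is a localized ambient isotopy: the detour arc sweeps a disc that may be taken arbitrarily close to $P_i$, and the concluding Reidemeister~I kink-removal is equally local. The full sequence of moves for $D_i$ therefore stays inside the slab $S_i$ and leaves every other component $L_j$ pointwise fixed. Running the sequences for $i = 1, \ldots, n$ in turn produces $n$ crossingless circles in pairwise disjoint parallel planes, which is the standard diagram of the $n$-component unlink; since the whole procedure is realized by an ambient isotopy of $\SS^3$, the original link $L$ is isotopic to the unlink.

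The only step that needs any care is the locality claim: if loop detour moves on $D_i$ could escape $S_i$, they might introduce new crossings with some $L_j$ for $j \ne i$ and wreck the componentwise argument. This potential obstacle is defused by \refdef{LoopDetour} itself, which constructs each move as a detour supported in a neighborhood of the tangle it sweeps — a neighborhood that can be taken inside $S_i$ by choosing the slab slightly thicker than the maximum height excursion already used to realize the self-crossings of $D_i$. With this locality in place, the componentwise reduction is airtight, and the theorem follows.
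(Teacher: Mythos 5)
Your proposal is correct and follows essentially the same route as the paper: both arguments use the disjoint parallel projection planes to split the link into non-interacting components and then invoke \refthm{AscendingIsUnknot} once per component. The only cosmetic difference is ordering --- the paper first slides the planes apart by an ambient isotopy so the diagram becomes a disjoint union of ascending or descending knot diagrams and then unknots each, whereas you unknot each component in place inside its own slab and justify locality of the loop detour moves; both reductions are sound.
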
 

\begin{proof} 
    Let $D$ be the link diagram of a link $L$ such that $D$ consists of ascending or descending link component diagrams in disjoint parallel projection planes. 
    
    Since the knot diagram of each link component of $L$ in $D$ lies in exactly one projection plane that is disjoint from all other parallel projection planes, we can slide the projections planes to new positions such that all link components are separated when they are projected to a common projection plane. 
    
    Note that sliding the projection planes in such a way constitutes an ambient isotopy that separates the individual link components in the diagram. The diagram $D$ thus becomes a disjoint union of ascending or descending knot diagrams. 
    
    By \refthm{AscendingIsUnknot}, an ascending or descending knot diagram is a diagram of an unknot. Hence, the link $L$ is an unlink. 
\end{proof} 

\begin{corollary}\label{Cor:UnknottingLinks}
    Let $D$ be any link diagram of a link $L$ in the $3$-sphere. Successively applying the robot to each link component produces a link diagram of an unlink. 
\end{corollary}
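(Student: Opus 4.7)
The plan is to observe that \refcor{UnknottingLinks} follows by simply chaining together the two results that immediately precede it: \refprop{PostRobotIsAscendingLinkCpn} tells us \emph{what} the post-robot diagram looks like, and \refthm{AscendingIsUnlink} tells us \emph{why} such a diagram must represent an unlink. No new combinatorics is required; the content has already been established.

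More concretely, I would start from an arbitrary oriented link diagram $D$ of a link $L$ with components $L_1,\ldots,L_n$ and corresponding link component diagrams $D_1,\ldots,D_n$, pick a starting point $b_i$ on each $D_i$, and apply the robot successively to $D_1,D_2,\ldots,D_n$. By \refprop{PostRobotIsAscendingLinkCpn}, the resulting diagram $D'$ has the property that its link components lie in disjoint parallel projection planes and that each link component diagram ascends from its starting point in the forward direction.

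Having produced such a $D'$, the hypotheses of \refthm{AscendingIsUnlink} are satisfied verbatim, so $D'$ is a diagram of an unlink. Since the robot's crossing switches constitute an allowed alteration of the link (indeed, the statement of the corollary only asserts that the \emph{output} diagram represents an unlink, not that the original link $L$ was itself an unlink), this completes the argument.

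The only place that requires any thought at all is making sure the order of operations respects the hypotheses of the cited results: specifically, that ``applying the robot to each link component'' in the corollary matches the setup of \reflem{Stacking} and \refprop{PostRobotIsAscendingLinkCpn}, where the robot is applied to the components in some fixed sequence. Since the ascending property of each post-robot component diagram is preserved under subsequent robot applications on \emph{other} components (the later operations only alter crossings involving those other components, and at inter-component crossings the earlier component is already on top), the chained composition indeed produces the required diagram in disjoint parallel projection planes. This is the only subtle point, and it is already subsumed in the proof of \refprop{PostRobotIsAscendingLinkCpn}, so there is no real obstacle.
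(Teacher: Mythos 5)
Your proposal is correct and matches the paper's own proof, which likewise derives the corollary directly by combining \refprop{PostRobotIsAscendingLinkCpn} with \refthm{AscendingIsUnlink}. The extra care you take about the order of robot applications is already handled inside \reflem{Stacking} and \refprop{PostRobotIsAscendingLinkCpn}, exactly as you note.
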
 

\begin{proof}
    The statement follows from \refprop{PostRobotIsAscendingLinkCpn} and \refthm{AscendingIsUnlink}. 
\end{proof}

\section{An upper bound on the number of Reidemeister moves}\label{sec:upper_bound}

In this section, we provide an upper bound on the minimal number of Reidemeister moves required for transforming an ascending or descending knot diagram to the zero-crossing unknot diagram. 

To show the upper bound in \refthm{UpBdRMoves}, we will first prove an upper bound on the minimal number of Type II and Type III Reidemeister moves that a tangle detour move (see \refdef{TangDetourMove}) can be decomposed into. Such an upper bound is stated in \refprop{TangleDetourMove}. Note that the tangle diagram $T$ in \refprop{TangleDetourMove} can be a subdiagram of an arbitrary link diagram, which need not necessarily be an ascending or descending knot diagram. 

\begin{proposition} \label{Prop:TangleDetourMove}
    Let $T$ be an $(m,n)$-tangle diagram. Let $c$ and $t$ be the number of crossings and number of trivial arcs in $T$ respectively. The tangle detour move associated to $T$ can be decomposed into a sequence of at most $(5c+m+n-t)$ Type II and Type III Reidemeister moves. 
\end{proposition}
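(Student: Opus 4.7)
The plan is to decompose the tangle detour move into a sequence of Type II and Type III Reidemeister moves by realizing the ambient isotopy of the detour arc $\gamma$ across $\mathcal{D}$ as a generic Morse-theoretic sweep with respect to the $4$-valent graph $G_T$ underlying $T$. Once the sweep is in generic position with respect to $G_T$, each moment either contributes no move (a transverse configuration), an R2 move (a tangency between $\gamma$ and an edge of $G_T$, creating or destroying two crossings of $\gamma$ with that edge), or an R3 move (a passage of $\gamma$ over a vertex of $G_T$). The problem thus reduces to counting these two kinds of events.

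The R3 count is immediate: each vertex of $G_T$ is passed over exactly once, yielding $c$ R3 moves. For the R2 moves, I would arrange the sweep so that the tangencies are controlled edge-by-edge. Specifically, I would argue that a generic sweep can be organized so that each non-sprout edge contributes at most $2$ tangencies (one to create and one to destroy a pair of crossings of $\gamma$ with the edge), each sprout contributes at most $2$ tangencies, and each trivial arc contributes at most $1$ tangency. Since the number of sprouts equals $m+n-2t$ and the number of trivial arcs is $t$, the total R2 count is bounded by $2e + 2(m+n-2t) + t$. Using \reflem{RelatingVAndE} to substitute $2e = 4c - (m+n) + 2t$, this simplifies to $4c + m + n - t$, and adding the $c$ R3 moves yields the stated bound.

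The main obstacle is making the per-edge tangency bounds precise, particularly near $\partial\mathcal{D}$ where sprouts terminate and where the prescribed initial and final crossing configurations constrain the sweep. The sweep must be constructed so that tangencies at sprouts and at trivial arcs can be realized without redundant R2 events, and so that the R3 event at each vertex is correctly set up by the R2 events on its four incident edges. I expect \reflem{NoCommonEdge} to play a role here, ruling out pathological polygonal configurations in $G_T$ that would otherwise force additional tangencies and break the per-edge bounds above.
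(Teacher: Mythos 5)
Your counting skeleton and final arithmetic coincide exactly with the paper's: $c$ Type~III moves (one per vertex passed), at most two Type~II moves per non-sprout edge and per sprout, one per trivial arc, giving $c+2e+2(m+n-2t)+t$, which Lemma~\ref{Lem:RelatingVAndE} converts to $5c+m+n-t$. The difference is in how the isotopy realizing these bounds is produced, and that is precisely where your proposal stops short. You write that you ``would arrange the sweep so that'' each edge contributes at most two tangencies, but for a generic Morse-theoretic sweep this is simply false in general --- a wiggly edge, or an edge that $\gamma$ must approach and retreat from repeatedly because of the surrounding configuration, can produce arbitrarily many tangency events. The per-edge bound of two is not a transversality statement; it is the theorem. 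So the obstacle you flag at the end is not a technical loose end but the entire content of the proof, and as written the argument has a genuine gap there.

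The paper closes this gap constructively rather than by genericity. It forms the cell complex $G_{T\mathcal{D}}=G_T\cup\partial\mathcal{D}$ and clears, one at a time, each polygon $P$ of $\mathcal{D}$ having a side on $\gamma$: for a $w$-gon with $w>3$ it applies one Type~II move between the $\gamma$-side and each of the at most $(w-3)$ non-adjacent $G_T$-sides, then Type~III moves across the resulting triangles; Lemma~\ref{Lem:NoCommonEdge} guarantees the sides of $P$ are distinct edges, and the bound of two Type~II moves per edge comes from each edge bounding at most two such faces (the shared-crossing configuration being handled separately). If you want to salvage the Morse-sweep framing, you would need to show that the sweep can be chosen to follow this face-by-face schedule --- at which point you have reproduced the paper's argument. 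Either way, you must supply the explicit construction; the appeal to a ``generic sweep organized edge-by-edge'' cannot stand on its own.
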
   

\begin{proof}
Let $e$ be the number of non-sprout edges in $T$. Let $\gamma$ be the detour arc of the tangle detour move associated to the $(m,n)$-tangle diagram $T$. Let $\mathcal{D}$ be the closed disc that $\gamma$ sweeps over, we assume $\mathcal{D}$ to be continuously deforming according to the position of $\gamma$ such that $\gamma$ is always a subarc of $\partial\mathcal{D}$. 

Let $G_T$ be the $(m,n)$-tangle graph of $T$. By \refdef{TangDetourMove}, we can assume that the two endpoints of $\gamma$ are fixed in the projection plane of the diagram. 

Consider the graph $G_{T\mathcal{D}} \coloneqq G_T \cup \partial\mathcal{D}$ with $(m+n+2)$ vertices in $\partial \mathcal{D}$. (See \reffig{G_TD_And_RII}, left.) Observe that $\gamma \subset \partial\mathcal{D}$ and $\gamma$ intersects $m$ sprouts of $T$ before the tangle detour move. 

\begin{figure} 
\begingroup%
  \makeatletter%
  \providecommand\color[2][]{%
    \errmessage{(Inkscape) Color is used for the text in Inkscape, but the package 'color.sty' is not loaded}%
    \renewcommand\color[2][]{}%
  }%
  \providecommand\transparent[1]{%
    \errmessage{(Inkscape) Transparency is used (non-zero) for the text in Inkscape, but the package 'transparent.sty' is not loaded}%
    \renewcommand\transparent[1]{}%
  }%
  \providecommand\rotatebox[2]{#2}%
  \newcommand*\fsize{\dimexpr\f@size pt\relax}%
  \newcommand*\lineheight[1]{\fontsize{\fsize}{#1\fsize}\selectfont}%
  \ifx\svgwidth\undefined%
    \setlength{\unitlength}{355.51246486bp}%
    \ifx\svgscale\undefined%
      \relax%
    \else%
      \setlength{\unitlength}{\unitlength * \real{\svgscale}}%
    \fi%
  \else%
    \setlength{\unitlength}{\svgwidth}%
  \fi%
  \global\let\svgwidth\undefined%
  \global\let\svgscale\undefined%
  \makeatother%
  \begin{picture}(1,0.30299737)%
    \lineheight{1}%
    \setlength\tabcolsep{0pt}%
    \put(0,0){\includegraphics[width=\unitlength,page=1]{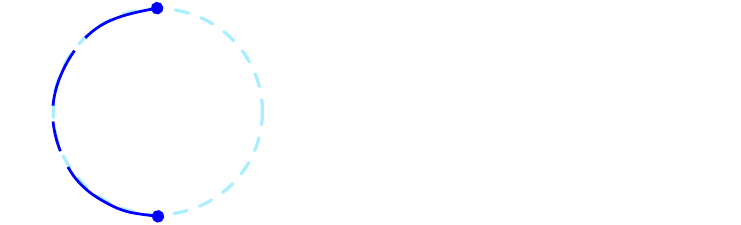}}%
    \put(0.082804,0.02740707){\color[rgb]{0,0,1}\makebox(0,0)[lt]{\lineheight{1.25}\smash{\begin{tabular}[t]{l}$\gamma$\end{tabular}}}}%
    \put(0.27854042,0.28617089){\color[rgb]{0,0.8,1}\makebox(0,0)[lt]{\lineheight{1.25}\smash{\begin{tabular}[t]{l}$\partial\mathcal{D}$\end{tabular}}}}%
    \put(0.18951498,0.15224928){\makebox(0,0)[lt]{\lineheight{1.25}\smash{\begin{tabular}[t]{l}$P$\end{tabular}}}}%
    \put(0,0){\includegraphics[width=\unitlength,page=2]{G_TD_And_RII.pdf}}%
    \put(0.64587698,0.02740702){\color[rgb]{0,0,1}\makebox(0,0)[lt]{\lineheight{1.25}\smash{\begin{tabular}[t]{l}$\gamma$\end{tabular}}}}%
    \put(0,0){\includegraphics[width=\unitlength,page=3]{G_TD_And_RII.pdf}}%
  \end{picture}%
\endgroup%
  
\caption{Left: The graph $G_{T\mathcal{D}}$ consists of the $(3,5)$-tangle graph $G_T$ (black) and the boundary $\partial \mathcal{D}$ of the disc that the detour arc $\gamma$ will sweep over. The symbol $P$ denotes an $8$-sided polygon in $G_{T\mathcal{D}}$. Right: The subdiagram (or subgraph) obtained after applying three Type II Reidemeister moves to the diagram on the left. Note that $\partial\mathcal{D}$ always contains $\gamma$ and its two end points. }
\label{Fig:G_TD_And_RII} 
\end{figure}  

Let $P$ be any polygon in $G_{T\mathcal{D}}$ such that one of the sides of $P$ is a subarc of $\gamma$ and $P$ lies in $\mathcal{D}$. (See \reffig{G_TD_And_RII}, left.) By the definition of detour move, $\gamma$ meets only undercrossings or only overcrossings, hence $\gamma$ cannot cross itself to form a $1$-sided polygon. Thus, $P$ cannot be $1$-sided. 

By definition, a tangle diagram is a sub-diagram of some knot diagram, so a tangle graph is a sub-graph of some $4$-valent graph. Thus, $P$ is a polygon or sub-polygon in some $4$-valent graph that contains $G_T$ as a subgraph. By \reflem{NoCommonEdge}, any two sides of $P$ that comes from $G_T$ are disjoint edges in $G_T$. 

Suppose $P$ is a $2$-sided polygon. There are two cases. If the other side of $P$ is a subarc of $\partial\mathcal{D}$, then $G_T$ is a $(0,0)$-tangle graph. This implies that the associated tangle detour move consists of no Reidemeister moves. Suppose, on the other hand, that the other side $S$ of $P$ is an edge or a subarc of a trivial arc in $T$. We can then apply a Type~II Reidemeister move to remove $S$ from
$\mathcal{D}$. 
 
Suppose $P$ is a $3$-sided polygon (i.e. triangle). From the definition of tangle detour move, the triangle $P$ cannot have two vertices that come from the same crossing, so the third vertex must come from $G_{T\mathcal{D}}$. By the definition of $G_{T\mathcal{D}}$, the other two sides of $P$ cannot be disjoint subarcs of $\partial\mathcal{D}$, so $P$ must have some side(s) that comes from $G_T$. 

If the triangle $P$ has exactly one side $S_T$ that is an edge or a subarc of trivial arc in $G_T$, then $S_T$ is a sprout or trivial arc in $G_T$. We can then perform an ambient isotopy without any Reidemeister move such that the detour arc is deformed into the post-tangle-detour-move position. Suppose the triangle $P$ has exactly two sides $S_T$ and $S_T'$ that are edges or subarcs of trivial arcs in $G_T$. These sides must be edges because trivial arcs do not have any crossings in $G_T$. Since the detour arc $\gamma$ meets only undercrossings or only overcrossings, we can perform a Type III Reidemeister move, which removes exactly one vertex of $G_T$ from $\mathcal{D}$. 

Now suppose $P$ is a $w$-sided polygon with $w>3$. Let $S_{\gamma}$ be the side of $P$ that is a subarc of the detour arc $\gamma$. Except for the two sides of $P$ that are neighbours to $S_{\gamma}$, we apply a Type II Reidemeister move between $S_{\gamma}$ and each of the at most $(w-3)$ $G_T$-edges of $P$. (See \reffig{G_TD_And_RII}, right.)  There would be at most $(w-2)$ triangles created. Since $\gamma$ meets only undercrossings or only overcrossings, we can apply Type III Reidemeister moves to remove at most $(w-2)$ crossings (some of the crossings could be the same crossing). 

Note that $P$ may have distinct vertices sharing the same crossing in $G_T$. (See \reffig{SharedCrossing}, top left.) For every such shared crossing $x$, two of the four edges incident to $x$ will correspond to at most two Type~II Reidemeister moves (see \reffig{SharedCrossing} for example). 

\begin{figure} 
\import{figures/}{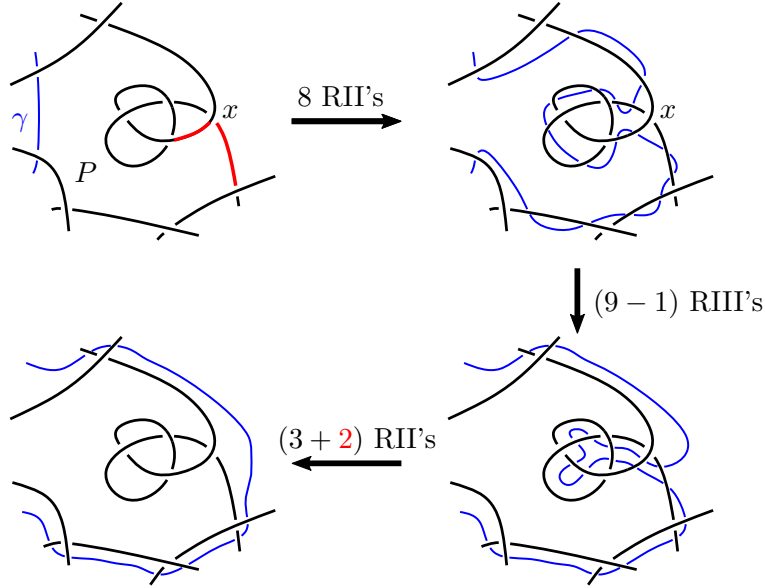}  
\caption{A sequence of Reidemeister moves that push the detour arc $\gamma$ away from an $11$-sided polygon $P$.  The crossing $x$ is the shared crossing of $P$. The two red edges incident to $x$ are the two edges that experience exactly two Type~II Reidemeister moves. Each of the other black edges of $P$ experience at most one Type~II Reidemeister move. }
\label{Fig:SharedCrossing} 
\end{figure}  

In any case, either $\gamma$ intersects a sprout/trivial arc, or we can remove edges/crossings/trivial arcs away from the disc $\mathcal{D}$.  Observe that each Type~III Reidemeister move corresponds uniquely to a removed crossing of $G_T$. Moreover, each Type~II Reidemeister move corresponds to some edge/trivial arc in $G_T$, and each edge in $G_T$ corresponds to at most two Type~II Reidemeister moves.

We can repeat a similar process for any polygon in $\mathcal{D}$ with one of its sides a subarc of the detour arc $\gamma$. We repeat the process until the disc $\mathcal{D}$ does not have any non-sprout edges or crossings such that $\gamma$ can be ambient isotoped into $\partial\mathcal{D}\setminus\gamma$ without any Reidemeister moves. Hence, the tangle detour move will require at most \[c+2[e+(m+n-2t)]+t\] 
Type~II and Type~III Reidemeister move(s), where $e+(m+n-2t)$ is the sum of the number of non-sprout edges and the number of sprouts. Note that $[e+(m+n-2t)]$ is the total number of edges in $G_T$. 

By \reflem{RelatingVAndE}, we have $4c = 2e + m+n-2t$. We thus have 
\[2e = 4c-m-n+2t\] 
Hence, an upper bound on the minimal number of Type~II and Type~III Reidemeister moves that a tangle detour move can decompose into is
\begin{align*}
&c+2e+2(m+n-2t)+t \\
&= c+(4c-m-n+2t)+2(m+n-2t)+t\\ 
&= 5c+m+n-t \qedhere
\end{align*} 
\end{proof}

The following is a corollary of \refprop{TangleDetourMove} that shows an upper bound on the minimal number of Reidemeister moves that a loop detour move can be decomposed into. 

\begin{corollary}\label{Cor:decomp_loop_detour}
    Let $T$ be an $(m,0)$-tangle diagram. Let $c$ and $t$ be the number of crossings and number of trivial arcs in $T$ respectively. The loop detour move associated to $T$ can be decomposed into a sequence of at most $(5c+m-t+1)$ Reidemeister moves.
\end{corollary}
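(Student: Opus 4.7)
The plan is to reduce the statement directly to Proposition~\ref{Prop:TangleDetourMove} and then account for the single extra Type~I Reidemeister move that the definition of a loop detour move appends to the tangle detour move. First I would recall from Definition~\ref{Def:LoopDetour} that a loop detour move is precisely the composition of two things: the $(v-u,0)$-tangle detour move associated with the tangle $T$ swept over by the detour arc $\gamma \subset \gamma^{\dagger}$, followed by exactly one Type~I Reidemeister move that removes the self-crossing $u$ of the loop. In the setup of the present corollary, the parameter $m$ plays the role of $v-u$, namely the number of crossings (equivalently, sprouts of $T$) at which $\gamma$ meets $T$ before the move.

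Next, I would verify that the hypotheses of Proposition~\ref{Prop:TangleDetourMove} are satisfied with $n=0$. This is immediate from Definition~\ref{Def:LoopDetour}: the detour arc $\gamma$ is a subarc of $\gamma^{\dagger}$, and the second condition in that definition forces $\gamma$ to meet only undercrossings or only overcrossings; after the move $\gamma$ meets no crossings of $T$, corresponding to $n=0$. Applying Proposition~\ref{Prop:TangleDetourMove} with $n=0$ therefore bounds the number of Type~II and Type~III Reidemeister moves needed for the tangle detour portion by
\[
5c + m + 0 - t \;=\; 5c + m - t.
\]

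Finally, I would add one further Reidemeister move for the Type~I move that eliminates the self-crossing $u$. Summing gives the claimed total of $5c + m - t + 1$ Reidemeister moves. I do not anticipate any obstacle beyond routine bookkeeping: the content of the bound is carried entirely by Proposition~\ref{Prop:TangleDetourMove}, and the only additional point to check is that the $+1$ correctly accounts for the concluding Type~I move on the self-crossing of $\gamma^{\dagger}$, as specified in Definition~\ref{Def:LoopDetour}.
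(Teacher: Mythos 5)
Your proposal is correct and follows essentially the same route as the paper: apply Proposition~\ref{Prop:TangleDetourMove} with $n=0$ to bound the tangle detour portion by $5c+m-t$ moves, then add one for the concluding Type~I move from Definition~\ref{Def:LoopDetour}. The paper's own proof is just a terser version of exactly this bookkeeping.
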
 

\begin{proof}
    Note that a loop detour move is a tangle detour move together with a Reidemeister~I move that immediately follows. Using this fact and \refprop{TangleDetourMove}, we have that a loop detour move can be decomposed into a sequence of at most $(5c+m-t+1)$ Reidemeister moves.   
\end{proof}

From \refcor{decomp_loop_detour} and the proof of \refthm{AscendingIsUnknot}, we obtain another main result below:  

\begin{theorem}\label{Thm:UpBdRMoves}
        Any ascending or descending knot diagram with $C$ crossings can be transformed into the zero-crossing unknot diagram using at most $(7C+1)C$ Reidemeister moves. 
\end{theorem}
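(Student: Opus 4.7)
The plan is to combine the constructive reduction from \refthm{AscendingIsUnknot} with the per-move Reidemeister-count estimate from \refcor{decomp_loop_detour}, and then bound both factors crudely by $C$.

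First, I would invoke \refthm{AscendingIsUnknot} to produce an explicit finite sequence of loop detour moves transforming the given ascending (or descending) knot diagram with $C$ crossings into the zero-crossing unknot diagram. Inspecting the proof of that theorem (via \reflem{LoopCode}), each loop detour move strictly decreases the crossing count of the current diagram: it removes the self-crossing $u$ of the loop together with the $m = v - u$ other crossings met by the loop, a total of $m+1 \geq 1$ crossings. Hence the number of loop detour moves in the sequence is at most $C$, and moreover the diagram at every intermediate stage has at most $C$ crossings.

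Next, I would bound the Reidemeister cost of each individual loop detour move using \refcor{decomp_loop_detour}: the $i$-th loop detour move, associated to an $(m_i, 0)$-tangle with $c_i$ crossings and $t_i$ trivial arcs, decomposes into at most $5c_i + m_i - t_i + 1$ Reidemeister moves. Since the diagram at that stage has at most $C$ crossings, both $c_i$ and $m_i$ are bounded above by $C$; using $t_i \geq 0$ gives the loose per-move estimate
\[
5c_i + m_i - t_i + 1 \;\leq\; 5C + C + 1 \;=\; 6C + 1 \;\leq\; 7C + 1.
\]
Summing over the at most $C$ loop detour moves produces the stated total of $C(7C+1) = (7C+1)C$ Reidemeister moves.

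The bookkeeping rather than any genuine difficulty is the main point; the only subtlety is to note that after each loop detour move the resulting Gauss code is still in ascending (respectively descending) form up to relabelling, so that the procedure of \refthm{AscendingIsUnknot} may be iterated. A sharper analysis using the identity that the self-crossing, the $m_i$ interface crossings, and the $c_i$ interior crossings are all distinct and therefore $c_i + m_i \leq C_i - 1$ where $C_i$ is the current crossing count would yield a better constant, but the stated quadratic bound follows immediately from the loose estimates above.
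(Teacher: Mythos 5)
Your proposal is correct and follows essentially the same route as the paper: at most $C$ loop detour moves via \refthm{AscendingIsUnknot}, each costing at most $7C+1$ Reidemeister moves by \refcor{decomp_loop_detour}. The only (harmless) difference is in the intermediate estimate: you bound $m_i \leq C$ directly since $m_i$ counts distinct crossings, giving $6C+1$, whereas the paper bounds $m \leq E = 2C$ by the edge count, giving exactly $7C+1$; both yield the stated $(7C+1)C$.
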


\begin{proof}
Let $D$ be any ascending or descending knot diagram with $C$ crossings and $E$ edges. 
The proof of \refthm{AscendingIsUnknot} shows that $D$ can be transformed into the zero-crossing unknot diagram by at most $C$ loop detour moves. 

By \refcor{decomp_loop_detour}, each loop detour move associated to some $(m,0)$-tangle diagram $T$ can be decomposed into a sequence of at most $(5c+m-t+1)$ Reidemeister moves, where $c$ and $t$ are the number of crossings and number of trivial arcs in $T$ respectively. Observe that
\begin{align*}
    5c+m-t+1 &\leq 5c+m+1  \\
    &\leq 5C+E+1 \\
    &= 5C+2C+1 \\
    &= 7C+1
\end{align*}

Since each loop detour move removes at least one crossing from $D$, there are at most $C$ loop detour moves that transform $D$ into the zero-crossing unknot diagram.  We thus obtain $(7C+1)C$ as an upper bound for the minimal number of Reidemeister moves required. 
\end{proof}
\section{A new alternative proof for the simplification theorem}\label{Sec:Simplification}

\begin{definition}
    A sequence of Reidemeister moves on a diagram is a \emph{simplification} if no new crossings are introduced at any point - at every move, the number of crossings is either reduced or remains constant.
\end{definition} 

We note that the process described in Section \ref{sec:upper_bound} is not a simplification in general. \reffig{nestedfish} also shows a diagram where any detour move on the marked edge is not a simplification, as there are no $3$-gons adjacent to the detour arc. 

\begin{figure}[ht]
    \centering
     $$ \vcenter{\hbox{\begin{overpic}[scale=1]{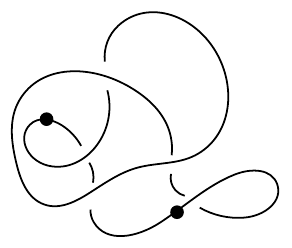}
\end{overpic}}}$$ 
    \caption{Consider the undercrossing strand between the two black dots to be a chosen detour arc for some detour move. This knot diagram provides an example of a detour move that is not a simplification.}
    \label{Fig:nestedfish}
\end{figure}

The loop detour move used to prove that an ascending diagram depicts a knot ambient isotopic to the unknot in \refthm{AscendingIsUnknot} decomposes into a sequence of Reidemeister moves, as shown in \refcor{decomp_loop_detour}. However, even though the result is a diagram with fewer crossings, many extra crossings are introduced in the process. In fact, depending on the loop-tangle diagram, it is not always possible to decompose the loop detour into a sequence of Reidemeister moves that is a simplification.

\begin{lemma}\label{Lem:loop_detour_not_simple}
    A loop detour move is not always a simplification by Reidemeister moves.
\end{lemma}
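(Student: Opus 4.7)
The plan is to prove this by exhibiting an explicit counterexample: a loop-tangle diagram for which \emph{every} sequence of Reidemeister moves that realises the loop detour move must, at some intermediate stage, increase the crossing count above the starting number. A natural candidate is a modification of the diagram shown in \reffig{nestedfish}, which is already noted to admit no simplifying detour move on the marked arc. I would convert that arc into a loop by adding a single kink (the self-crossing required by \refdef{LoopDetour}), placed so that the loop $\gamma^\dagger$ encloses exactly the same tangle region.

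First, I would describe the construction carefully and verify that the result is indeed a loop-tangle diagram in the sense of \refdef{LoopDetour}: the subarc $\gamma^\dagger$ has a single self-crossing $u$, meets only undercrossings (or only overcrossings, as appropriate) thereafter, and the two strands incident to $u$ lie outside the bounded disc. Second, I would analyse the complementary regions of the diagram that are adjacent to the loop's interior arc $\gamma$. The key local combinatorial observation is that if a simplification is to take place whose first move affects $\gamma$, then the move must be either a Reidemeister~I collapse of a monogon on $\gamma$, a Reidemeister~II removal of a bigon bounded by $\gamma$ and a single other arc, or a Reidemeister~III move across a triangle one of whose sides lies on $\gamma$. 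By construction of the nested-fish tangle, none of these regions exists adjacent to $\gamma$; every region meeting $\gamma$ has at least four sides.

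The main obstacle is ruling out simplifying moves that occur \emph{away} from $\gamma$ first. Here I would argue as follows: the tangle enclosed by the loop is itself an ascending-style configuration arranged so that no Reidemeister~I or II reduction is immediately available in its interior either, and any Reidemeister~III move in the interior would only permute crossings without changing the combinatorial adjacency of any region to $\gamma$. Hence after any sequence of non-increasing moves that do not touch $\gamma$, the loop is still enclosed by a tangle presenting no monogon, bigon, or triangle along $\gamma$, so the loop cannot be detoured out without first opening room via a Reidemeister~II move that introduces at least two new crossings. Consequently any Reidemeister decomposition of the loop detour move on this diagram must, at some step, raise the crossing count strictly above its initial value, which is the negation of being a simplification.

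Finally, I would conclude by stating that such a diagram exists (pointing the reader to the figure) and therefore the loop detour move is not, in general, decomposable into a simplification, proving \reflem{loop_detour_not_simple}. This also motivates \refcor{Simplified} and Theorem~\ref{thm:always_simplify}, whose point is that even though the loop detour itself need not be a simplification as stated, one can always first simplify the \emph{interior} of the loop-tangle subdiagram (possibly altering it) so that the resulting loop detour then becomes a simplification.
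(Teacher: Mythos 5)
Your overall strategy is the same as the paper's: exhibit an explicit loop-tangle diagram in which no Reidemeister~I, II, or III move is available along the detour arc, so that any Reidemeister realisation of the loop detour must begin with a crossing-increasing Reidemeister~II move. The paper does exactly this with the purpose-built example of \reffig{nonSimplification}, observing that there are no available Reidemeister~III moves on the outer loop. Your plan to adapt \reffig{nestedfish} by adding a kink is a plausible variant, but as written the construction is only gestured at: you would still need to draw the diagram, check that the added self-crossing does not itself create a monogon, bigon, or triangle adjacent to the loop, and verify the three conditions of \refdef{LoopDetour}. The lemma is ultimately proved by a concrete picture, so the picture has to exist.

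The genuine problem is your third paragraph. First, it is unnecessary: by \refdef{LoopDetour} the loop detour move is a specific isotopy of the loop over the \emph{fixed} tangle followed by a Reidemeister~I move, so to show that this move is not a simplification you only need to show that this particular isotopy cannot be realised without increasing crossings; sequences that first rearrange the interior constitute a different transformation, and the whole point of Theorem~\ref{thm:always_simplify} is that such preprocessing always rescues the situation for ascending diagrams. Second, the claim you rely on --- that an interior Reidemeister~III move ``would only permute crossings without changing the combinatorial adjacency of any region to $\gamma$'' --- is false in general: Remark~\ref{rem:track_transport} shows that Reidemeister~III moves transport a triangle along a track of quadrilaterals, and such a track can terminate at the exterior loop, changing exactly the adjacencies you claim are invariant. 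Indeed, if your diagram really sits inside an ascending diagram, as you suggest, the conclusion of that paragraph would contradict Theorem~\ref{thm:always_simplify}. Drop the third paragraph and make the counterexample explicit, and your proof coincides with the paper's.
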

\begin{figure}
    \centering
    \includegraphics[width=0.6\textwidth]{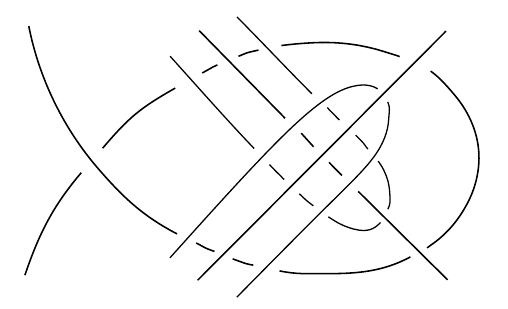}
    \caption{An example of a loop-tangle diagram for which a loop detour move is not a simplification.}
    \label{Fig:nonSimplification}
\end{figure}
    \begin{proof}
        Counterexample: \reffig{nonSimplification}. Moving only the outer loop requires performing a Reidemeister II move that introduces new crossings, as there are no available Reidemeister III moves on the outer loop. 
    \end{proof}

\begin{conjecture}\label{Conj:Simplify}
    It \emph{is} always possible to completely simplify an ascending or descending knot diagram to a no-crossing diagram by Reidemeister moves. 
\end{conjecture}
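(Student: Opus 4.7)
The plan is to deduce \refconj{Simplify} from the technical theorem \ref{thm:always_simplify} (alluded to in the introduction): the interior of any loop-tangle subdiagram in an ascending diagram can be preconditioned by a simplifying sequence of Reidemeister moves so that the associated loop detour move itself decomposes into a simplification. Granted this theorem, \refconj{Simplify} follows by mimicking the proof of \refthm{AscendingIsUnknot}: the Gauss code identifies a loop-tangle, which we replace by the simplifying decomposition supplied by the theorem; by \reflem{LoopCode} and \reflem{GaussCodesForAscending} the resulting diagram is again ascending from the same starting point and has strictly fewer crossings, so the process terminates at the zero-crossing unknot diagram without ever introducing a new crossing.

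The core work is the technical theorem. I would proceed by strong induction on the total number of crossings enclosed in the loop disc $\mathcal{D}$. The base case (only the self-crossing of $\gamma^\dagger$) is handled by a single Reidemeister I move, which is a simplification. For the inductive step, let $(\gamma^\dagger,T)$ be a loop-tangle whose disc $\mathcal{D}$ contains more crossings. First, if $T$ contains an \emph{inner} loop-tangle (a subarc inside $\mathcal{D}$ that is itself a loop-tangle in the sense of \refdef{LoopDetour}), pick an innermost one, apply the inductive hypothesis to replace its loop detour by a simplification, and recurse. After finitely many steps $\mathcal{D}$ contains no inner loop-tangles.

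At this stage, the plan is to locate a simplifying Reidemeister II or III move adjacent to $\gamma^\dagger$, using the planar-graph analysis of \reflem{NoCommonEdge} and \reflem{RelatingVAndE}: some face of the tangle graph incident to an edge of $\gamma^\dagger$ must be a bigon or a triangle. The uniform under-passage of $\gamma^\dagger$ guaranteed by the ascending hypothesis then ensures that the corresponding Reidemeister II or III move can in fact be performed, and such a move either strictly reduces the crossing count in $\mathcal{D}$ or replaces $(\gamma^\dagger,T)$ by a strictly simpler loop-tangle; the inductive hypothesis completes the step.

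The main obstacle I anticipate is establishing the existence of a small face adjacent to $\gamma^\dagger$ once all inner loop-tangles have been removed. The interior of $T$ may still contain faces of large degree, and the self-crossing vertex complicates the boundary structure. I plan to address this by a global counting argument combining Euler's formula for the planar subdivision of $\mathcal{D}$, the four-valence of interior tangle vertices exploited in \reflem{RelatingVAndE}, and the constraint imposed by the absence of inner loop-tangles, followed by a case analysis according to whether the small boundary face sits on $\gamma^\dagger$, at the self-crossing, or on an enclosed strand of $T$. A further subtlety, highlighted by \reflem{loop_detour_not_simple}, is that the decomposition of the outermost loop detour into Reidemeister moves must be chosen carefully — we cannot blindly invoke the upper-bound construction of \refprop{TangleDetourMove}, since that may introduce crossings — and the inductive scaffolding must guarantee that the decomposition ultimately produced is a simplification.
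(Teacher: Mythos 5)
Your reduction of the statement to the technical theorem (Theorem~\ref{thm:always_simplify} plus the loop-detour decomposition of Theorem~\ref{Thm:AscendingIsUnknot}) matches the paper's route to Corollary~\ref{Cor:Simplified} exactly. The gap is in your proof of the technical theorem itself, and it sits precisely at the step you flag as ``the main obstacle'': the claim that, once inner loop-tangles are exhausted, some face incident to an edge of $\gamma^\dagger$ must be a bigon or triangle. This is false, and the paper exhibits a counterexample (the loop-tangle of Figure~\ref{Fig:nonSimplification}, used to prove Lemma~\ref{Lem:loop_detour_not_simple}): there the outer loop bounds no $2$- or $3$-gon at all, so no simplifying move is available adjacent to $\gamma^\dagger$. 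The Euler-characteristic count you invoke (the paper's version is Theorem~\ref{Thm:RelationBetweenNumNgons}, giving $3f_1+2f_2+f_3 = 3 + \sum_n nf_{n+4}$) only guarantees at least three triangles \emph{somewhere} in the disc once $1$- and $2$-gons are gone; it says nothing about their position relative to the loop, and those triangles can be separated from the loop and from each other by arbitrarily many quadrilaterals and pentagons (Figure~\ref{fig:complicated_loop-tangle}).

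Closing that gap is where essentially all of the paper's work lies, and your sketch does not contain the needed ideas. The paper first runs a discharging argument (Proposition~\ref{prop:not_fixed}) to show a minimal loop-tangle must contain a triangle on the loop, two adjacent triangles, a quadrilateral next to a triangle, or a pentagon next to at least four triangles; it then transports triangles along ``tracks'' of quadrilaterals by repeated Reidemeister~III moves (Remark~\ref{rem:track_transport}); and the residual case, where every track terminates at a pentagon, is handled by a pigeonhole count showing some pentagon absorbs at least four tracks, after which an RIII move reduces the pentagon count or exposes a simplifying track. Your inductive scaffolding on ``inner loop-tangles'' also does not substitute for this: a tangle with no inner loop-tangles (in the sense of Definition~\ref{Def:LoopDetour}, which requires a subarc meeting only under- or only overcrossings) can still realise the hard configurations above. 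One further point you handle only implicitly: the availability of the interior RII/RIII moves is not automatic from crossing data, and the paper needs Lemma~\ref{lem:R-moves_in_ascending} (obstructed moves must abut the starting point $b$, pushed to infinity via the $(1,1)$-tangle view) to license every interior move used.
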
 

It is worth noting that this conjecture is false for links, despite having an analogous robot unlinking theorem. 

\begin{theorem}
    It is not always possible to simplify an ascending link into the unlink by Reidemeister moves.
\end{theorem}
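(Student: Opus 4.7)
The plan is to produce an explicit ascending link diagram that cannot be simplified to the crossingless unlink by Reidemeister moves. Unlike the knot case addressed by \refcor{Simplified}, multi-component ascending link diagrams can admit configurations in which no crossing-reducing move is available: the loop-tangle simplification machinery of a single component can be blocked by strands of another component, and the stacking-order constraint on inter-component crossings removes certain moves that would otherwise be available in a general link diagram.

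First, I would construct a candidate two-component diagram $D = D_1 \cup D_2$ with stacking $D_2$-over-$D_1$, so that every inter-component crossing has $D_2$ as the over-strand. The candidate should satisfy: (a) each component is individually ascending from a chosen starting point; (b) the planar face structure of $D$ contains no kinks, no bigons with matching over-strand, and no triangles that admit a Reidemeister~III move; and (c) $D$ is topologically an unlink. The defining challenge is that bigons between components with a consistent stacking order are automatically Reidemeister~II-reducible (same over-strand at both corners), so the candidate must be arranged so that inter-component crossings do not form bigons. This forces the face structure to be dominated by $4$-gons and larger polygons, which in turn requires enough self-crossings within components, or enough complexity in inter-component interleaving, to eliminate all bigons.

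Next, I would verify ascendingness via the Gauss-code criterion of \reflem{GaussCodesForAscending}, verify the unlink property via \refthm{AscendingIsUnlink}, and verify immediate irreducibility by a finite combinatorial check: enumerate the faces of $D$ and confirm the absence of kinks, same-over bigons, and Reidemeister~III-compatible triangles. This rules out all moves that decrease the crossing count in a single step. To complete the argument, I must also rule out crossing-preserving Reidemeister~III moves that might later enable a reduction. The cleanest route is to exhibit a diagrammatic invariant $\phi$ satisfying $\phi(D') \leq \phi(D)$ for every crossing-non-increasing Reidemeister move $D \to D'$, together with $\phi(D) > 0$ and $\phi$ vanishing on the crossingless unlink.

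The principal obstacle is this last step. Without such an invariant, one would need to track all diagrams reachable from $D$ by non-increasing Reidemeister moves and confirm that none of them admits a reducing move; for a concrete small example this is feasible by hand or with computer assistance, but a clean general argument needs the invariant. Natural candidates for $\phi$ include low-degree coefficients of the normalized Kauffman bracket, the writhe paired with a face-count, or a count of specific irreducible loop-tangle patterns appearing in the Gauss codes of the components after marking the stacking data.
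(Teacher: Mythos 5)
Your proposal has a genuine gap, and you have in fact identified it yourself: everything hinges on either exhibiting a concrete candidate diagram and exhaustively checking all diagrams reachable from it by crossing-non-increasing moves, or on producing a monotone diagrammatic invariant $\phi$ --- and you supply neither. Ruling out the possibility that a sequence of crossing-preserving Reidemeister III moves eventually unlocks a reducing move is precisely the hard part of any ``hard diagram'' argument, and naming candidate invariants (bracket coefficients, writhe plus face counts) without verifying monotonicity under all non-increasing moves does not close it. Your constraints also pull against each other: as you note, inter-component bigons with a consistent stacking order are automatically Reidemeister~II-reducible, so you are pushed into a delicate face-structure analysis before you even have a candidate diagram, let alone a proof that nothing reachable from it is reducible.

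The paper's proof sidesteps all of this with a much simpler device: take any ascending knot diagram $D$ with at least one crossing and place a crossing-free circle in the interior of every complementary region (Figure~\ref{fig:simplifying_link_counter}). The result is still an ascending link diagram (the added components carry no crossings whatsoever) and is an unlink, but any Reidemeister move that would simplify $D$ must sweep a strand of $D$ across some region of the plane, and every region is occupied by one of the added circles; passing a strand over such a circle necessarily creates new crossings first. Non-simplifiability is then immediate, with no invariant and no reachability search. The lesson, if you want to salvage your approach, is that the obstruction need not be encoded in the crossing combinatorics of a cleverly interleaved two-component diagram; it can be manufactured by auxiliary crossing-free components whose only role is to block the faces.
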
 

    \begin{proof}
        Given any ascending knot diagram $D$ with at least one crossing, add a crossing-free link component (collectively, $L$) to the interior of every interstitial region. An example is shown in Figure \ref{fig:simplifying_link_counter}.
        
        Then to perform any Reidemeister move to unknot $D$, it is necessary to pass a strand of $D$ over some component of $L$. New crossings between $D$ and $L$ must be introduced during this process. Hence, there  exists at least one Reidemeister move that increases the number of crossings.  
    \end{proof}

\begin{figure}
    \centering
    \includegraphics{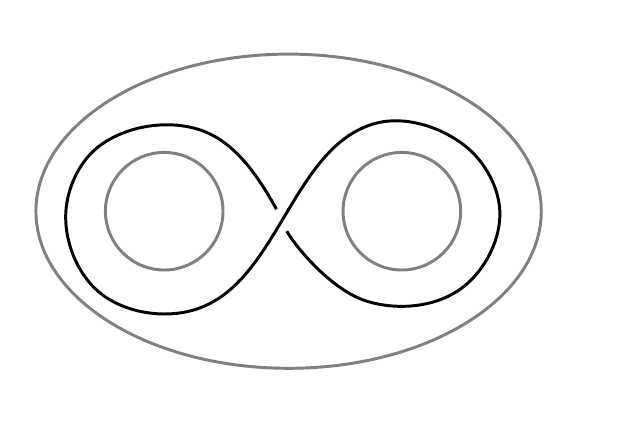}
    \caption{A one-crossing unknot diagram $D$ (black) with added link components $L$ (grey) that cannot be simplified by Reidemeister moves.}
    \label{fig:simplifying_link_counter}
\end{figure}

We return to the knot case. Despite Lemma~\ref{Lem:loop_detour_not_simple}, we would still like to unknot the diagram via loop detours if possible. 
We establish a goal state with the following lemma:  

\begin{lemma}\label{lem:detour_goal}
    A loop detour performed on a loop-tangle diagram with no interior crossings is a simplification by Reidemeister moves.
\end{lemma}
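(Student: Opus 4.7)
The plan is to decompose the loop detour into a sequence of Reidemeister~II moves, one per tangle chord, followed by a single Reidemeister~I move that removes the kink, with every move strictly reducing the crossing count. Let $T$ be the $(m,0)$-tangle inside the loop-tangle diagram; by hypothesis $c = 0$, so $T$ consists of $m/2$ pairwise disjoint simple arcs in the disc $\mathcal{D}$ bounded by $\gamma^\dagger$, each having both endpoints on $\gamma$. By the definition of a loop-tangle diagram, $\gamma$ meets all of them uniformly as overcrossings or uniformly as undercrossings.

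The first key step would be a combinatorial claim: any non-crossing matching of the chord endpoints $p_1, \ldots, p_m$ listed in linear order along $\gamma$ contains an adjacent pair $\{p_i, p_{i+1}\}$. I would prove this by a standard balanced-parenthesis induction: $p_1$ is matched to some $p_j$, and if $j > 2$ then $p_2, \ldots, p_{j-1}$ form a smaller non-crossing matching to which induction applies. Crucially, all chord endpoints lie on $\gamma$ rather than on the tiny subarc of $\gamma^\dagger$ containing the self-crossing, so cutting $\gamma^\dagger$ at the self-crossing gives a genuine linear arrangement to which the lemma applies. The adjacent pair $\{p_i, p_{i+1}\}$ belongs to a single chord $\alpha$, and $\alpha$ together with the subarc of $\gamma$ between $p_i$ and $p_{i+1}$ bounds a bigon in $\mathcal{D}$ with no other chord inside.

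Because $\gamma$ passes uniformly over (or under) $\alpha$ at both $p_i$ and $p_{i+1}$, this bigon is exactly the configuration needed to apply a Reidemeister~II move, which removes the two crossings and pushes $\gamma$ past $\alpha$. What remains is again a loop-tangle diagram of the same form, with one fewer chord, no interior crossings, and the uniform over/under property preserved on the surviving chords. Iterating this procedure $m/2$ times exhausts the tangle, leaving $\gamma^\dagger$ as a simple kink with only its self-crossing, which a final Reidemeister~I move erases. The whole sequence consists of $m/2 + 1$ strictly simplifying moves, matching the loop detour. The main obstacle to verify is that each R2 is genuinely local to its chosen bigon and preserves the loop-tangle structure for the induction step; this should be immediate since R2 touches only the two crossings it removes and affects neither the positions of the other chords nor the consistency of $\gamma$'s over/under pattern across them.
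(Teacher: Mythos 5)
Your proposal is correct and follows essentially the same route as the paper: locate an innermost empty bigon between a tangle strand and the loop, remove it with a Reidemeister~II move, iterate until the tangle is exhausted, and finish with a Reidemeister~I move on the remaining kink. Your non-crossing-matching argument simply makes explicit the existence of the innermost bigon that the paper's proof asserts directly.
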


    \begin{proof}
        Let $D$ be a loop-tangle diagram based at a crossing $u$ with no interior crossings, that is, no crossings in the tangle. Each strand in the tangle divides the disc bounded by the loop into two regions. Consider the regions which do not have $u$ in the boundary. There exist innermost regions that are bigons; performing an RII move on such a region removes the strand from the loop-tangle diagram. When all strands have been removed, the loop detour is completed with an RI move on the 1-gon with $u$ as its vertex.
    \end{proof}

\begin{figure}
    \centering
    \includegraphics{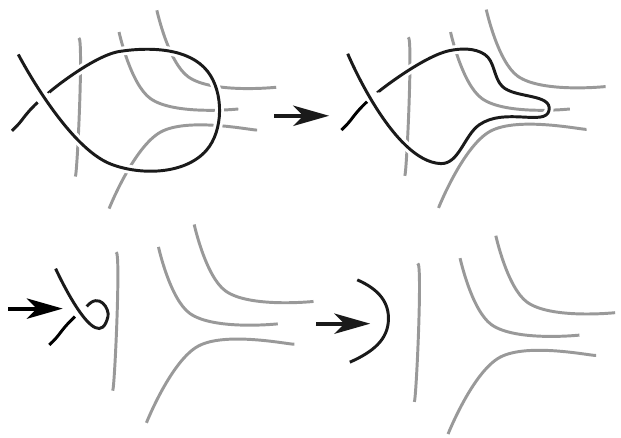}
    \caption{Performing a loop detour on a loop-tangle diagram with no internal crossings as a simplification via a series of Reidemeister II moves followed by a single Reidemeister I.}
    \label{fig:simplifying_no_crossing}
\end{figure}

An example of a simplifying detour move is shown in Figure \ref{fig:simplifying_no_crossing}. Thus, we aim to remove the crossings from the interior of a loop-tangle diagram by simplification.

\begin{lemma}\label{lem:R-moves_in_ascending}
    In an ascending diagram, if a 2- or 3-sided region is not available to perform Reidemeister moves on, it contains the starting point $b$ in its boundary. Further, there are at most two such regions in a given diagram.
\end{lemma}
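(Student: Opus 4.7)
The plan is to argue by contrapositive: if a 2- or 3-sided region $R$ does not have $b$ on its boundary, then $R$ admits a Reidemeister move of the appropriate type. The main tool is \reflem{GaussCodesForAscending}: at each crossing $x$, the letter $x^{-1}$ (first visit, undercrossing) appears before $x^{+1}$ (second visit, overcrossing) in the Gauss code read from $b$. I will also use that each edge of the underlying shadow graph corresponds to a unique adjacency between consecutive letters of the Gauss code, and the \emph{wrap-around} adjacency between the last and first letters is exactly the one associated with the edge containing $b$. The main obstacle is cleanly stating the non-reducibility criterion for triangles; once this is done, both the 2-gon and 3-gon cases follow the same positional squeeze.

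For a 2-sided region $R$ with vertices $x, y$, the two bigon edges are consecutive in the cyclic edge-order at each vertex and hence lie on different strands there. If $R$ is not RII-reducible, then the two bigon edges must form an \emph{alternating} configuration: one edge is on the under-strand at one vertex and on the over-strand at the other (and vice versa for the second edge). Translated into the Gauss code, this forces the two adjacencies of $R$ to be $\{x^{-1}, y^{+1}\}$ and $\{x^{+1}, y^{-1}\}$. If neither adjacency wraps around through $b$, then writing $p(\cdot)$ for position in the linear Gauss code we have $p(y^{+1})-p(x^{-1}) \in \{\pm 1\}$ and $p(y^{-1})-p(x^{+1}) \in \{\pm 1\}$. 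Subtracting gives $(p(x^{+1})-p(x^{-1})) + (p(y^{+1})-p(y^{-1})) \in \{-2,0,2\}$, while the ascending inequalities force each summand to be at least $1$; hence each difference equals exactly $1$. Back-substituting forces $y^{+1}$ and $x^{+1}$ into the same position, contradicting distinctness of letters.

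For a 3-sided region $R$ with vertices $x, y, z$, a short case analysis on the eight possible over/under patterns shows that $R$ is RIII-reducible unless the pattern is \emph{cyclic}: each edge of $R$ is over at one endpoint and under at the other, with these orientations cycling consistently around $R$. In this non-reducible case the three Gauss code adjacencies have the form $\{x^{+1}, y^{-1}\}$, $\{y^{+1}, z^{-1}\}$, $\{z^{+1}, x^{-1}\}$ (or the reverse cycle). If none of these wraps around $b$, then summing the three adjacency relations against the three ascending inequalities $p(x^{+1})-p(x^{-1}) \geq 1$ (and similarly for $y, z$) forces each adjacency difference to equal $+1$. This in turn gives $p(x^{+1}) = p(x^{-1}) + 1 = p(y^{-1}) + 1$, so $p(x^{-1}) = p(y^{-1})$, a contradiction. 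Hence some edge of $R$ contains $b$, placing $b$ on $\partial R$.

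The ``at most two'' count is then immediate: $b$ lies on a unique edge of the shadow, and in the $4$-valent planar graph of a knot diagram that edge borders exactly two distinct faces (it cannot be a bridge, since the knot traversal is an Eulerian circuit, which cannot return after crossing a bridge). So at most two faces of the diagram have $b$ on their boundary, giving at most two non-reducible 2- or 3-gons.
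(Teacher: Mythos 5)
Your proof is correct, and it takes a genuinely different route from the paper's. The paper argues directly from the robot's traversal order: if $b$ is not on the boundary of a bigon, one strand is traversed completely before the other, so the first-traversed strand meets both crossings as first encounters (undercrossings) and the RII move is available; for a triangle, the first-traversed strand gets two undercrossings and the last two overcrossings, which yields the linear over/under order needed for RIII. You instead first isolate the obstructed configurations (the alternating bigon and the cyclic triangle), translate them into Gauss-code adjacencies, and derive an arithmetic contradiction with the ascending condition $p(x^{-1})<p(x^{+1})$ of \reflem{GaussCodesForAscending} unless one of the region's adjacencies is the wrap-around adjacency at $b$. The positional squeeze in both cases is airtight: the sum of the $\pm1$ adjacency differences telescopes to the sum of the ascending differences, forcing every sign to be $+1$ and collapsing two distinct letters onto one position. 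What each approach buys: the paper's argument is shorter and more visual, while yours pins down precisely \emph{where} $b$ must sit (on one of the region's edges, via the unique wrap-around adjacency) and is self-contained once one grants the standard classification of non-reducible bigons and triangles, which you assert rather than carry out (this is the same level of detail as the paper, which likewise states the ``each edge has one over- and one undercrossing'' criterion without proof). Your count of ``at most two'' also supplies a detail the paper glosses over, namely that the edge containing $b$ borders two distinct faces because the $4$-valent shadow, being Eulerian, has no bridges. One shared implicit assumption in both proofs is that the two (resp.\ three) corners of the region are distinct crossings; this is harmless in the minimal-diagram setting where the lemma is later applied.
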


    \begin{proof}
        Consider the action of a robot as it produces an ascending diagram.
        
        An RII move cannot be performed on a 2-sided region of a diagram only when each edge has one over- and one undercrossing. If $b$ is not on the boundary, then one strand is traversed completely by the robot before the other and both crossings are met as undercrossings, so the RII move is available. Suppose instead $b$ is between the two crossings of the 2-sided region on one of the strands. Then one of the crossings is the first crossing met as the robot is applied, and so an undercrossing, and the other is the last crossing met before the robot returns to $b$ and so an overcrossing - the RII move is not available, as shown in Figure~\ref{fig:woven_reidemeister} (left).

        Similarly, an RIII move is unavailable only when each edge of a 3-sided region of the diagram has one over- and one undercrossing. If $b$ is not included in the boundary, the three strands are traversed in some order by the robot - the first with two undercrossings and the third with two overcrossings, so the RIII move is available. Suppose again $b$ is between two of the crossings. Then one crossing is an undercrossing and the other an overcrossing, as above. Depending on the order of traversal of the other two strands, the RIII may be unavailable, as shown in Figure~\ref{fig:woven_reidemeister} (right).

        As $b$ lies on the knot diagram, it is included in the boundary of exactly two regions of the diagram. As such a region is the only type that can be unavailable to perform Reidemeister moves on, at most two regions may be unavailable to perform Reidemeister moves.
    \end{proof}

    \begin{figure}[ht]
            \begin{center}
            $$ \vcenter{\hbox{\begin{overpic}[scale=1]{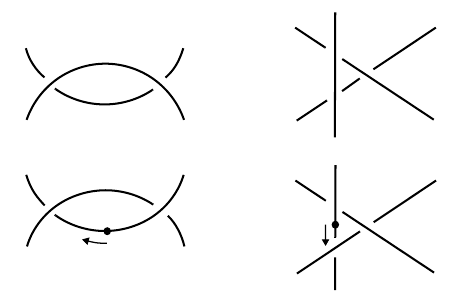}
            \put(49, 39){$b$}
            \put(154, 40){$b$}
            \end{overpic}}}$$
            \end{center}
            \caption{In an ascending diagram, regions that do not contain $b$ are available to perform Reidemeister moves, whereas regions that contain $b$ in their boundary may not be. Left: available and unavailable RII moves. Right: available and unavailable RIII moves.}
            \label{fig:woven_reidemeister}
        \end{figure}

We now use the view of an ascending diagram described in Remark~\ref{rmk:1,1_tangle_view}, where the starting point $b$ is taken to be infinity by planar isotopy.

\begin{lemma}\label{lem:R-moves_always_allowed_in_tangle}
    Let $D$ be a $(1,1)$-tangle diagram of an ascending knot diagram with starting point $b$ at infinity. Then it is possible to apply a Reidemeister I, II, or III move to every bounded 1-, 2-, or 3-gon in $D$, respectively.
\end{lemma}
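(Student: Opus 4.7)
The plan is to invoke Lemma~\ref{lem:R-moves_in_ascending} and exploit the placement of $b$ at infinity. Since $b$ lies on the knot diagram, it is contained in the boundary of exactly two regions. After the planar isotopy sending $b$ to infinity, $b$ lies in the closure of the unbounded region of the plane and, crucially, is \emph{not} on the boundary of any bounded region of $D$. This single observation turns out to do most of the work.

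First, for bounded 2-gons and 3-gons, I would apply Lemma~\ref{lem:R-moves_in_ascending} directly. Its contrapositive says that if a 2- or 3-sided region does not contain $b$ on its boundary, then the associated Reidemeister II or III move is available. Since every bounded 2-gon or 3-gon in $D$ satisfies this hypothesis, each admits the appropriate move. Conceptually, the ``woven'' obstruction to an RII or RIII move (alternating over/under on the bounding edges, as illustrated in Figure~\ref{fig:woven_reidemeister}) can only arise for the two regions meeting at $b$, and both of those now coincide with the unbounded region.

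Next, I would handle the 1-gon case with a short, separate argument because Lemma~\ref{lem:R-moves_in_ascending} does not address it. A 1-gon is a region whose boundary is a single edge beginning and ending at a single crossing, that is, a kink. By definition, the interior of a region contains no other arcs or crossings, so a bounded 1-gon encloses an empty disk in the plane. Consequently, the Reidemeister I move that untwists the kink is unobstructed, independent of the sign of the crossing and of the ascending condition; in particular, there is no analogue of the ``woven'' obstruction for RI, since RI depends only on the single boundary crossing.

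I do not anticipate a real obstacle here: the lemma is essentially a repackaging of Lemma~\ref{lem:R-moves_in_ascending} combined with the topological triviality of a bounded 1-gon interior. The only subtle point to verify carefully is that sending $b$ to infinity genuinely expels it from the boundary of every bounded region — which is immediate once one notes that the unbounded region is the unique region whose closure contains the point at infinity in $\SS^2$.
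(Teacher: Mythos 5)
Your proposal is correct and follows essentially the same route as the paper: it dispatches 1-gons by noting that Reidemeister I moves are always available regardless of crossing information, and handles bounded 2- and 3-gons by observing that Lemma~\ref{lem:R-moves_in_ascending} confines any obstruction to the two regions adjacent to $b$, which become the unbounded regions once $b$ is sent to infinity. No gaps.
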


    \begin{proof}
        Reidemeister I moves are always available on 1-gons, regardless of crossing information.
       
        By Lemma~\ref{lem:R-moves_in_ascending}, the only Reidemeister moves that may be unavailable in an ascending diagram are those on regions directly adjacent to $b$. As $b$ is taken to be the point at infinity, these two regions are the unbounded ones about the $(1,1)$-tangle diagram, i.e.~every bounded  2- and 3-gon of the diagram is available to perform Reidemeister moves upon.
    \end{proof}

Given the above, we define the following:

\begin{definition}
    A \emph{minimal diagram} is a knot diagram in which every possible Reidemeister I or II moves that remove crossings has been applied. 
\end{definition}

By Lemma~\ref{lem:R-moves_always_allowed_in_tangle}, a minimal ascending $(1,1)$-tangle diagram $D$ contains no 1- or 2-sided regions. In particular, no loop-tangle diagram of $D$ contains $b$ in its interior, and so a minimal loop-tangle diagram also contains no 1- or 2-sided regions. As Reidemeister moves are always available in such a loop-tangle diagram, we may instead regard the diagram as purely a graph without crossing information when convenient.

\begin{definition}
A loop-tangle graph is a sub-graph obtained by disregarding the over/under-crossing information in a loop-tangle diagram (see \reffig{LoopTangleDiag} for example) defined in \refdef{LoopDetour}. Note that the loop-tangle graph contains edges that lie outside of the disc bounded by the loop that contains the tangle.  These outward edges are called \emph{sprouts} and each sprout is connected to one vertex (on the loop) only.  
\end{definition} 

Given a loop tangle diagram of an ascending or descending knot diagram, we can ensure that there exists at least three $3$-sided regions by a graph-theoretic statement in \refthm{RelationBetweenNumNgons} if there are no $1$- or $2$-sided regions. 

\refthm{RelationBetweenNumNgons} is analogous to a   more generalised version of Theorem~2 in \cite{Eliahou-Harary-Kauffman:LuneFreeKnotGraphs}, which is quoted in \refthm{RelationBetweenNumNgons_LinkGraph} below. 

\begin{theorem}[Theorem~2 in \cite{Eliahou-Harary-Kauffman:LuneFreeKnotGraphs}]\label{Thm:RelationBetweenNumNgons_LinkGraph}
    Let $G$ be a connected link graph. For any positive integer $N$, let $f_N$ be the number of $N$-sided face(s) in $G$. We have 
    \[3  f_1 + 2f_2 + f_3 = 8 + \sum_{n=1}^{\infty} nf_{n+4}\]
\end{theorem}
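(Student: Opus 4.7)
The plan is to derive the identity directly from Euler's formula for connected planar graphs combined with the $4$-valency of a link graph. Let $G$ be a connected link graph, interpreted here as a connected $4$-regular planar graph embedded in $\SS^2$ (so that the face count includes the unbounded region), and let $V$, $E$, $F$ denote the numbers of vertices, edges, and faces of $G$ respectively. Euler's formula gives $V - E + F = 2$.

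First I would exploit the $4$-valency in two ways. Summing degrees over vertices yields $4V = 2E$, whence $E = 2V$; substituting into Euler's formula gives $V = F - 2$ and therefore $E = 2F - 4$. Next, counting edge-face incidences, each edge lies on the boundary of exactly two faces (a bridge or loop being counted with multiplicity in a single face), so
\[
\sum_{N \geq 1} N f_N \;=\; 2E \;=\; 4F - 8 \;=\; 4 \sum_{N \geq 1} f_N - 8,
\]
since $F = \sum_{N \geq 1} f_N$. Note that the sum is effectively finite because $G$ has only finitely many faces.

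Rearranging the above gives $\sum_{N \geq 1}(N - 4) f_N = -8$. I would then split this by ranges of $N$: the terms $N = 1, 2, 3$ contribute $-3f_1 - 2f_2 - f_3$, the term $N = 4$ vanishes, and for $N \geq 5$ the coefficient $(N-4)$ is positive. Moving the $N \leq 3$ contributions to the right-hand side and re-indexing the tail with $n = N - 4$ produces
\[
3 f_1 + 2 f_2 + f_3 \;=\; 8 \;+\; \sum_{n = 1}^{\infty} n\, f_{n+4},
\]
which is the desired identity.

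There is no real obstacle; the argument is a standard Euler-characteristic computation. The main care points are purely bookkeeping: ensuring the outer face is included in $F$, checking that the edge-face incidence identity $\sum_N N f_N = 2E$ correctly handles bridges and self-loops (each contributes $2$ to the left-hand side), and verifying that the final re-indexing $n = N - 4$ matches the notation of the statement.
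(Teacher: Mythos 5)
Your proof is correct. The paper itself does not reprove this statement (it is quoted from Eliahou--Harary--Kauffman), but its proof of the companion result for loop-tangle graphs, \refthm{RelationBetweenNumNgons}, is exactly this computation --- Euler's formula, the $4$-valency degree count, and the edge--face incidence count --- adapted to a disc of Euler characteristic $1$ with sprouts along the boundary; your argument takes essentially the same approach.
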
 

\begin{theorem}\label{Thm:RelationBetweenNumNgons}
    Let $G$ be a loop-tangle graph and $R$ be the disc bounded by the loop such that $R$ contains the tangle graph. For any positive integer $N$, let $f_N$ be the number of $N$-sided face(s) in $R$. We have 
    \[3  f_1 + 2f_2 + f_3 = 3 + \sum_{n=1}^{\infty} nf_{n+4}\]
\end{theorem}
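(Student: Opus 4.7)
The plan is to adapt the standard Euler-characteristic plus incidence-counting argument used for Theorem~\ref{Thm:RelationBetweenNumNgons_LinkGraph} to the setting of a disc rather than the whole sphere. Let $G'$ denote the subgraph of $G$ consisting of the loop (a cycle bounding $\partial R$) together with the tangle strands inside $R$---that is, $G$ with all sprouts removed. Then $G'$ is connected: the loop is itself a cycle, every tangle arc has both of its endpoints on the loop (since the associated tangle is a $(v-u,0)$-tangle), and each interior tangle crossing lies on such an arc. Viewed as a planar graph, the faces of $G'$ consist of the bounded regions inside $R$ (the $N$-gons counted by $f_N$) together with a single unbounded region outside $R$.

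First I would establish three identities. Let $V_0$ be the number of crossings strictly inside $R$; let $V$ and $E$ be the vertex and edge counts of $G'$; and let $E_{loop}$ and $E_T$ denote the number of loop-boundary edges and interior tangle edges in $G'$. Euler's formula on the sphere gives $V - E + F = 2$ with $F = \sum_N f_N + 1$, so $\sum_N f_N = E - V + 1$. A direct degree count yields $V = (v-u+1) + V_0$, $E_{loop} = v - u + 1$, and $E_T = 2 V_0 + \tfrac{v-u}{2}$. The crucial subtlety is at the self-crossing $u$: both of its non-loop edges are sprouts lying in the exterior of $R$, so $u$ contributes degree $2$ to $G'$, whereas each of $u+1, \ldots, v$ contributes degree $3$ (two loop edges plus one tangle edge entering $R$), and each interior crossing contributes degree $4$. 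Finally, face--edge incidence counting gives $\sum_N N f_N = E_{loop} + 2 E_T$, since each loop-boundary edge borders exactly one bounded face (the unbounded exterior of $R$ being its other incident face), while each interior tangle edge borders two bounded faces.

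Combining these,
\[
    \sum_N (4 - N) f_N \;=\; 4(E - V + 1) - (E_{loop} + 2 E_T) \;=\; 3 E_{loop} + 2 E_T - 4V + 4,
\]
and after substituting the counts above the $(v-u)$ and $V_0$ terms cancel to leave $\sum_N (4 - N) f_N = 3$. Rearranging, this reads $3 f_1 + 2 f_2 + f_3 - \sum_{n \geq 1} n f_{n+4} = 3$, which is the stated identity. The main obstacle is the bookkeeping: to obtain the constant $3$ rather than the $8$ of the closed-sphere version, one must carefully separate loop-boundary edges from interior tangle edges (they have different face-incidence counts) and handle the self-crossing $u$ as distinct from the other loop crossings (its two non-loop edges are outside $R$, so $u$ contributes degree $2$ to $G'$ rather than $3$).
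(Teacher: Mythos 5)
Your proof is correct and takes essentially the same approach as the paper's: Euler's formula applied to the disc region, a handshake/degree count at the vertices, and a face--edge incidence count distinguishing loop-boundary edges (one incident bounded face) from interior tangle edges (two), with the constant $3$ emerging the same way. The only difference is bookkeeping --- you delete the sprouts and record vertex degrees $2$, $3$, $4$ explicitly, whereas the paper keeps the sprouts, writes $4v = 2e + s$, and uses the identity $s - \beta = 1$.
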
 
 
\begin{proof}
Let $v$ be the total number of vertices in both the loop and the tangle, $e$ be the number of edges in $G$ (excluding the sprouts), and $f$ be the number of faces in the disc region $R$. Following from Euler's polyhedron formula, we have $v-e+f=1$. 

Let $s$ be the number of sprouts in $G$. Note that each vertex has four incident edges (including sprouts). Each non-sprout edge is touching exactly two (possibly the same) vertices. By counting the four edges around every vertex (some edges are possibly the same), we count each edge in the loop and tangle twice and each sprout once. Therefore, we have $4v = 2e + s$. 

Let $\beta$ be the number of boundary edges (i.e. edges in the loop, excluding the sprouts). Note that each edge that is neither a sprout nor a boundary edge is touching exactly two faces. By counting all edges of each face in the disc region $R$, we count each interior edge twice and each boundary edge once. Hence, $\sum_{n=1}^{\infty} n f_n = 2e - \beta$. 

We can now express $v-e+f = 1$ in terms of $f$, $f_n$'s, $s$, and $\beta$. 
\begin{align*}
    v-e+f &= 1 \\
    4v-4e+4f &= 4 \\
    (2e+s)-4e+4f &= 4 \\ 
    4f - 2e + s &= 4 \\
    4f - (\sum_{n=1}^\infty nf_n + \beta) + s &= 4 \\
    4\sum_{n=1}^\infty f_n - \sum_{n=1}^\infty nf_n  + (s-\beta) &= 4.
\end{align*}
Since each tangle arc is traverse to the loop, there is exactly one sprout corresponding to each vertex that comes from the crossing between the loop and the tangle. For the vertex that comes from the self-crossing of the loop, there are exactly two sprouts. Note that the number of vertices and the number of edges along a circular loop are the same, we have $s-\beta = +1$. 

Hence, we have 
\begin{align*}
    4\sum_{n=1}^\infty f_n - \sum_{n=1}^\infty nf_n  + 1 &= 4 \\
    3  f_1 + 2f_2 + 1f_3 + 0f_4 + \sum_{n=5}^\infty (4-n)f_n +1 &= 4 \\
    3  f_1 + 2f_2 + 1f_3 + 0f_4 - \sum_{n-4=1}^\infty (n-4)f_{n-4+4} &= 3 \\
    3  f_1 + 2f_2 + 1f_3 &= 3 + \sum_{m=1}^\infty mf_{m+4}. \qedhere
\end{align*}
\end{proof}

Of note is the implication in the relation of \refthm{RelationBetweenNumNgons} that if there are no 1- or 2-sided faces in $R$, then there must exist at least three 3-sided faces in $R$. In particular, 1- and 2-sided regions may be removed by applying Reidemeister I and II moves, respectively, and the remaining at least three 3-sided regions are available to perform Reidemeister III moves. Also significant is the complete lack of restrictions on the number of four sided regions with respect to the number of other polygons.

In fact, the above relation has further implications for the structure of general loop-tangle diagrams, as shown below.

\begin{proposition}\label{prop:not_fixed}
    Given any minimal loop-tangle diagram, at least one of the following types of subdiagrams must exist: 
    \begin{itemize}
        \item triangles with one edge a connected subarc of the loop,  
        \item triangles sharing an edge with another triangle,
        \item quadrilaterals sharing at least one edge with triangles, or
        \item pentagons sharing at least four edges with triangles.
    \end{itemize}
\end{proposition}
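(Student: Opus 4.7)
The plan is to argue by contradiction using a discharging scheme driven by Theorem~\ref{Thm:RelationBetweenNumNgons}. Suppose that none of the four listed subdiagram types occurs in a given minimal loop-tangle diagram. Because minimality forces the disc $R$ bounded by the loop to contain no $1$- or $2$-sided faces, we have $f_1=f_2=0$, and Theorem~\ref{Thm:RelationBetweenNumNgons} rearranges to the identity $\sum_{n\geq 1}(4-n)f_n = 3$. I view this as a charge balance: each face $F$ of $R$ carries the initial charge $c(F)=4-|F|$, and the total charge is the positive number $3$.

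Next I apply the discharging rule that every triangle sends $1/3$ of its charge across each of its three edges to the face on the other side. The failure of the first listed possibility forces every edge of every triangle to be interior to $R$, and the failure of the second ensures that the face across each such edge is a non-triangle, so all $3f_3$ donations land on $k$-gons with $k\geq 4$. I then tally the final charges face by face. Triangles finish at $0$. Quadrilaterals receive nothing by the failure of the third possibility and so stay at $0$. Pentagons receive at most $3\cdot(1/3)=1$ by the failure of the fourth possibility, finishing at $\leq -1+1=0$. A $k$-gon with $k\geq 6$ receives at most $k\cdot(1/3)$, ending at $\leq 4-k+k/3 = 4-2k/3 \leq 0$. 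Summing yields a total final charge that is nonpositive, contradicting conservation of the total charge at the positive value $3$.

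The main obstacle I anticipate is the bookkeeping that matches each listed condition with exactly the bound needed for its corresponding face size: conditions (1) and (2) control the triangle (donor) side, conditions (3) and (4) respectively control the $4$- and $5$-gon receiving sides, and the $k\geq 6$ case is handled automatically by the initial charge deficit. Along the way I will invoke Lemma~\ref{Lem:NoCommonEdge} to confirm that a triangle's three sides are three genuinely distinct edges of the underlying $4$-valent graph, so that the $1/3$-per-edge count is well defined, and I will carefully distinguish interior edges from boundary (loop) edges so that each triangle donation is correctly accounted for as landing on a face inside $R$.
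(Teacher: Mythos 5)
Your proposal is correct and follows essentially the same route as the paper: the identical discharging scheme (initial charge $4-i$ on each $i$-gon summing to $3$ by Theorem~\ref{Thm:RelationBetweenNumNgons}, each triangle donating $1/3$ across each edge), with the only difference being that you phrase it contrapositively—assuming all four configurations are absent and deriving a nonpositive total charge—whereas the paper discharges directly and observes that exactly those four configurations are the ones retaining positive charge. The case analysis and bounds match the paper's.
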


    \begin{proof}       
        Proceed by a \emph{discharging} argument. Consider a loop-tangle diagram with each $i$-gon face labelled by $(4-i)$, i.e.~labelled by the \emph{weight}, or \emph{charge}, which is the multiplicative constant of $f_i$ after subtracting $\sum_{n=1}^{\infty} nf_{n+4}$ from both sides of the equation in Theorem~\ref{Thm:RelationBetweenNumNgons}. By the same theorem, the sum of all these weights  (the \emph{net charge}) of the loop-tangle diagram is 3.

        In a minimal loop-tangle diagram, the only faces with positive charges are triangles, with weight $+1$. Consider a \emph{discharging move} where a triangle $T$ distributes $1/3$ of its charge to each of the interior faces of the loop-tangle diagram adjacent to $T$ through an edge. Note that no charge is passed over the loop, as only interior faces receive charge. This plainly preserves the net charge of the loop-tangle diagram.

        Suppose every triangle is simultaneously discharged in this way. The subdiagrams which retain a positive charge are exactly those in the statement of the proposition, each of which retains a charge of at least $+1/3$ after discharging. It can also be seen that for an $n$-gon with $n\geq 6$, the net charge of the $n$-gon is non-positive even if every edge is adjacent to a triangle; thus, these previous cases are the only ones where the net charge is strictly positive. As the net charge across the entire loop-tangle is +3, any loop-tangle diagram must have at least one of the four types of subdiagrams. 
    \end{proof}

We wish to prove that in a minimal loop-tangle diagram, it is always possible to perform Reidemeister moves that allow progressing the loop detour move as a simplification. Of the four types of subdiagrams in Proposition~\ref{prop:not_fixed}, there are two types which allow us to perform non-crossing-creating Reidemeister moves. The first such type is a triangle adjacent to the loop, at which we can perform Reidemeister III move to remove a crossing from the tangle. The second such type is a triangle adjacent to another triangle, where we can apply Reidemeister III move followed by a Reidemeister II move that removes two crossings.    

Note that when we have a quadrilateral with triangles on opposite sides, we can apply two Reidemeister III moves followed by a Reidemeister II move that removes two crossings. The next remark extends the discussion here.

\begin{definition}
A \emph{track} is a path of quadrilaterals connected by opposite edges, with \emph{length} given by the number of quadrilaterals in the path.
For ease of subsequent arguments, a track may have length zero, i.e.~include only a single edge.
\end{definition}

\begin{remark}\label{rem:track_transport}
    Given a triangle at one end of a track, the triangle can be transported along the track by a sequence of Reidemeister III moves.

    Such a sequence terminates in one of the following three ways, when the track meets:
    \begin{itemize}
        \item a second triangle, forming a bigon that can be removed by an RII move,
        \item the exterior loop, moving a crossing outside the loop-tangle, or
        \item an $n$-gon with $n\geq5$.
    \end{itemize}
\end{remark}

    \begin{figure}[ht]
            \begin{center}
            \includegraphics[]{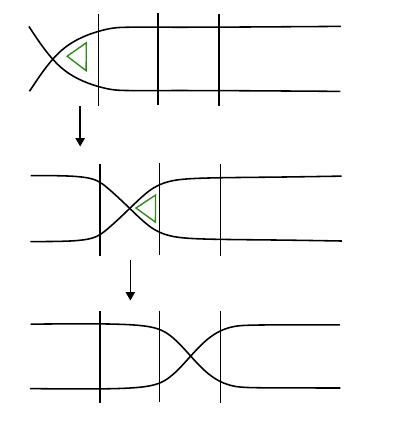}
            \end{center}
            \caption{Given a triangle next to a path of 4-gons, it is possible to transport a crossing along the ``track" by repeated RIII moves. Here, each successive diagram is obtained by performing an RIII move on the triangular region indicated by a green triangle.}  
            \label{fig:crossing_transport}
        \end{figure}

The remaining cases, which do not obviously allow simplification, are a quadrilateral adjacent to a single triangle or to exactly two triangles that share a vertex, or a pentagon adjacent to at least four triangles.

\begin{theorem}\label{thm:always_simplify}
    It is always possible to simplify the interior of a loop-tangle in an ascending diagram such that the associated loop detour may be performed as a simplification.
\end{theorem}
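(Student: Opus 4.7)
The plan is to prove the statement by strong induction on the number of interior crossings of a minimal loop-tangle subdiagram, with Lemma~\ref{lem:detour_goal} supplying the base case: when there are no interior crossings, the loop detour decomposes into a sequence of RII moves followed by one RI, every step of which is a simplification.

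For the inductive step, assume the loop-tangle has at least one interior crossing. If it is not minimal, an RI or RII move is available that strictly reduces the interior crossing count, and the inductive hypothesis finishes the job. Otherwise the loop-tangle is minimal, so by Lemma~\ref{lem:R-moves_always_allowed_in_tangle} every bounded interior $3$-gon admits an RIII move, and by Proposition~\ref{prop:not_fixed} at least one of the four local configurations must appear.

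I would dispatch the configurations in turn. A triangle with one edge on the loop is resolved by a single RIII that transports a crossing out of the loop-tangle, strictly reducing the interior crossing count. Two triangles sharing an edge are collapsed by an RIII followed by an RII that together remove two crossings. For a quadrilateral adjacent to a triangle, I would follow the maximal track supplied by Remark~\ref{rem:track_transport}: terminating at a second triangle yields an RII reduction, while terminating at the exterior loop pushes a crossing out of the loop-tangle. In each of these subcases the inductive hypothesis closes the argument.

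The main obstacle is the remaining possibility that every track leaving every interior triangle terminates only at a polygon with at least five sides; this is the pentagon-with-four-triangles configuration of Proposition~\ref{prop:not_fixed} together with the stuck subcase of the quadrilateral case. The plan here is a second-level discharging argument layered on top of \refthm{RelationBetweenNumNgons}. Concretely, I would redistribute each problematic pentagon's or quadrilateral's charge by having it absorb from its adjacent triangles along the directions of their outgoing tracks; if no track terminated favourably, the total redistributed charge could be shown to be at most zero, contradicting the net charge of $+3$ supplied by \refthm{RelationBetweenNumNgons}. The delicate step is verifying that tracks leaving distinct triangles do not share interior faces in a way that forces the absorption to double-count, so a careful analysis of how pentagons, quadrilaterals, and triangles can pack around one another is unavoidable, and this is where I expect most of the work to lie.
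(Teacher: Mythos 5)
Your setup (induction on interior crossings, base case from Lemma~\ref{lem:detour_goal}, reduction to minimal diagrams, and the dispatch of triangles on the loop, adjacent triangles, and tracks terminating at a triangle or at the loop) matches the paper's argument. The gap is in your treatment of the stuck case, where every track from every triangle terminates at an $n$-gon with $n\geq 5$. You propose to rule this configuration out by a second-level discharging argument contradicting the net charge of $+3$ from Theorem~\ref{Thm:RelationBetweenNumNgons}. This cannot work, because the configuration is not impossible: the paper exhibits an explicit minimal loop-tangle diagram (Figure~\ref{fig:complicated_loop-tangle}, with $f_5=26$) in which every track from every triangle terminates at a pentagon. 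No static counting or charge-redistribution argument can contradict the existence of something that exists; the counting only yields the inequality $3f_3\leq\sum_{n\geq5}nf_n$, which combined with Theorem~\ref{Thm:RelationBetweenNumNgons} forces $f_5\geq 5$ but produces no contradiction when $f_5$ is large.

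What the paper does instead, and what your proposal is missing, is a dynamic argument: the pigeonhole principle (using $3f_3>3f_5$) locates a pentagon that is the terminal of at least four tracks; after transporting the triangles adjacent to that pentagon via RIII moves, an RIII move on a triangle adjacent to two others through vertices converts the pentagon and both neighbouring triangles into quadrilaterals. Either this strictly decreases the number of pentagons --- so that after finitely many such steps $f_5<5$, at which point the counting inequality fails and some track must terminate favourably --- or the polygon at the third vertex was a quadrilateral and becomes a new pentagon, in which case one shows (as in Figure~\ref{fig:constant_pentagon}) that a track with triangles at both terminals already existed and simplification was available all along. Without some such progress measure on the pentagons, your induction does not close in the stuck case.
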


    \begin{proof}
        Let $D$ be a loop-tangle diagram.
        Assume WLOG that $D$ is minimal, as removing 1- and 2-gons by Reidemeister moves is a simplification.

        By Proposition~\ref{prop:not_fixed}, there exists in $D$ at least one of the four types of subdiagrams listed in the same statement.

        Of these, simplification is possible in a fixed number of Reidemeister moves for:
        \begin{itemize}
            \item a triangle adjacent to the loop, 
            \item two adjacent triangles, and
            \item any quadrilateral adjacent to two triangles on opposite sides.
        \end{itemize}

        Suppose none of the above cases occurs. Then, again by Proposition~\ref{prop:not_fixed}, there exist quadrilaterals next to either a single triangle or two triangles that share a vertex  of the quadrilateral, or pentagons adjacent to at least four triangles. Performing an RIII move on the latter forms a quadrilateral next to triangles, as shown in Figure~\ref{fig:remove_pentagon}, so WLOG there exist triangles adjacent to quadrilaterals.

        Now as in Remark~\ref{rem:track_transport}, triangles next to quadrilaterals may be transported by successive RIII moves. Of the three possible terminals (loop, triangle, or $n$-gon with $n\geq 5$), two allow simplification. In the following, we will consider the remaining case where the track terminates in an $n$-gon with $n\geq 5$. 

        Suppose then that every possible track from every triangle terminates in an $n$-gon with $n\geq 5$. By \reflem{NoCommonEdge}, there is no shared edge in each polygon in the diagram, so each $n$-gon has $n$ distinct sides. Since any two distinct tracks cannot join to end at the same side of some polygon, it follows that each side of each triangle corresponds to at most one side of some $n$-gon uniquely. Hence, we have the following inequality:
        
        $$3f_3\leq\sum_{n=5}^{n_{max}}nf_n;\quad(*)$$
        i.e.~the total number of sides in $n$-gons with $n\geq5$ exceeds the total number of sides of triangles. If $(*)$ does not hold, at least one track from a triangle must terminate in one of the two simplifying cases and simplification is possible.

        Denote by $n_{max}$ the number of sides of the polygon with the maximum number of sides in the loop tangle diagram. Let $n_{max}=5$. Then by Theorem~\ref{Thm:RelationBetweenNumNgons}, we have $f_3= 3 + f_5$. If $f_5<5$, Equation $(*)$ does not hold because 
        \begin{align*}
            f_5<5 &\Rightarrow f_5 < \frac{9}{2} \\ &\Rightarrow 2f_5<9 \\
            &\Rightarrow 5f_5<9 + 3f_5 = 3(3+f_5) = 3f_3
        \end{align*}
        Simplification is thus possible by Remark~\ref{rem:track_transport} along tracks that do not terminate at pentagons. 
        
        Now assume $f_5\geq5$. Since $3f_3> 3(f_3-3) = 3f_5$, by the pigeonhole principle, there exist pentagons that are the terminal of tracks from four or more triangles. 

        Let $n_{max}>5$. Observe that $3(n-4)\geq n$ for $n\geq6$; that is, $n$-gons with $n\geq6$ imply  the existence of more triangle sides than they can be the terminals of tracks for, by Theorem~\ref{Thm:RelationBetweenNumNgons}.  The above pigeonhole argument thus still holds, and there exist pentagons that are the terminals of at least four tracks.
        
        Select a pentagon that is the terminal of at least four tracks and perform RIII moves such that the triangles are adjacent to the pentagon. As there are at least four triangles, there are triangles which are adjacent to two others through vertices around the pentagon. Performing an RIII move on such a triangle replaces the pentagon and both adjacent triangles with quadrilaterals. 
        
        Consider the polygon on the third vertex of this triangle before the RIII move. If the polygon on the third vertex is not a quadrilateral, then it does not produce a pentagon after the move and the number of pentagons is reduced by at least one. As there are a finite number of pentagons, eventually $f_5<5$ and $(*)$ does not hold, so simplification is possible. If the polygon is a quadrilateral, it becomes a pentagon after the move and the number of pentagons remains constant. Suppose this occurred for every possible triangle. Then the above pigeonhole argument still holds, and some of these pentagons are the terminals of at least four tracks from triangles; as in the right of Figure~\ref{fig:constant_pentagon}, this implies simplification was already possible through a track with triangles at both terminals.
        
        As there exist simplifying Reidemeister moves in every possible case, eventually all crossings in the interior of the loop-tangle may be removed. Then by Lemma~\ref{lem:detour_goal}, the loop detour is a simplification. Thus it is always possible to simplify the interior of a minimal loop-tangle diagram such that the loop detour is a simplification.
    \end{proof}

\begin{figure}[ht]
    \begin{center}
        \includegraphics[]{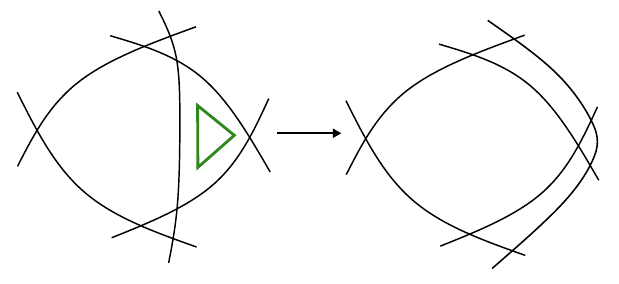}
    \end{center}
    \caption{A pentagon adjacent to three triangles that share vertices can be replaced with quadrilaterals by performing an RIII move on the ``middle" triangle. This region is indicated by a green triangle in the figure.}
    \label{fig:remove_pentagon}
\end{figure}

\begin{figure}[ht]
    \begin{center}
        \includegraphics[]{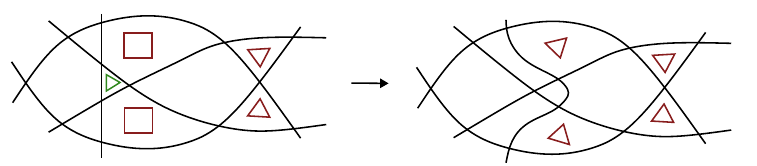}
    \end{center}
    \caption{An arrangement where a Reidemeister III removes one pentagon and forms another. A choice of any three red regions ensures the resulting pentagon is adjacent to at least four triangles, but also that simplification is possible along a track of two quadrilaterals.}
    \label{fig:constant_pentagon}
\end{figure}

Note that the scenario in the above proof of every one of a small number of pentagons being used as efficiently as possible, with every side the terminal of a track from a triangle, is difficult to construct. To construct such an example in practice, a large number of pentagons is usually required, as in Figure~\ref{fig:complicated_loop-tangle}.

\begin{figure}[H]
    \begin{center}
        \includegraphics[width=\textwidth]{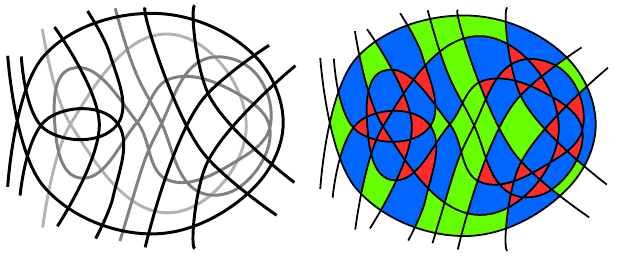}
    \end{center}
    \caption{A loop-tangle diagram where every track from every triangle meets a pentagon. The most complicated components are marked in grey at the left and triangles, quadrilaterals and pentagons are marked in red, green and blue, respectively at right. In this case, $f_5=26$.}
    \label{fig:complicated_loop-tangle}
\end{figure}

\begin{corollary} \label{Cor:Simplified}
    An ascending diagram can always be simplified by Reidemeister moves to a crossingless unknot diagram. 
\end{corollary}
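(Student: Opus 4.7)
The plan is to proceed by induction on the number of crossings in the given ascending diagram $D$, with the base case of zero crossings being trivial. So suppose $D$ has $n \geq 1$ crossings and the result holds for all ascending diagrams with strictly fewer crossings.

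First, by the proof of \refthm{AscendingIsUnknot}, the Gauss code of $D$ (after relabeling by the robot) contains a subword of the form $u^{-1}(u+1)^{-1}\cdots v^{-1}u^{+1}$, where $u$ is the first crossing traversed twice. This subword determines a loop-tangle subdiagram of $D$. We then apply Theorem~\ref{thm:always_simplify} to this loop-tangle: there is a sequence of Reidemeister moves, confined to the interior of the loop-tangle, that does not increase the crossing count and reduces the loop-tangle to one with no interior crossings, followed by the loop detour itself performed as a simplification via Lemma~\ref{lem:detour_goal}. The composite sequence is a simplification and produces a diagram $D'$ with strictly fewer crossings than $D$.

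To close the induction we need $D'$ to be ascending from the same starting point $b$. The crucial observation is that the entire simplification is localised to the disc bounded by the loop, which by construction (and the fact that $D$ is a $(1,1)$-tangle diagram with $b$ at infinity, cf. Remark~\ref{rmk:1,1_tangle_view}) does not contain $b$; the portion of $D$ outside the loop-tangle, together with its global traversal order relative to $b$, is carried over unchanged to $D'$. Combined with \reflem{LoopCode}, the Gauss code of $D'$ is obtained from that of $D$ by deleting the loop-tangle letters (up to a local permutation among interior tangle letters that does not affect the ``first-under-then-over'' pattern of any single crossing). Hence $D'$ is ascending with fewer crossings, and the inductive hypothesis gives a simplification of $D'$ to the zero-crossing unknot diagram; concatenating the two simplifications completes the simplification of $D$.

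The main obstacle I expect is the rigorous verification in the third paragraph that the local Reidemeister~II and III moves inside the loop-tangle, together with the loop detour, leave the resulting diagram ascending from $b$. Reidemeister~III moves in particular permute the order of adjacent crossings in the Gauss code, and one must check that no such permutation swaps the first and second appearance of any individual crossing label. The cleanest resolution is probably to track carefully that each interior RIII move occurs strictly inside the loop-tangle disc (so the global ordering against the starting point $b$ is unchanged) and that each RII removes exactly two matched letters; alternatively, one may run the robot afresh on $D'$ at the end of each inductive stage, which is not a Reidemeister move but re-establishes the ascending Gauss code without altering the underlying diagram.
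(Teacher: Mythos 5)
Your proposal is correct and follows essentially the same route as the paper: the paper's proof is simply the concatenation of Theorem~\ref{Thm:AscendingIsUnknot} (a finite sequence of loop detours unknots the diagram) with Theorem~\ref{thm:always_simplify} (each loop detour can be realised as a simplification), and your explicit induction on the crossing number, with the check that the intermediate diagram remains ascending, is just a more careful unpacking of that same argument. One small caveat: your fallback of ``running the robot afresh on $D'$'' is circular --- it leaves the diagram unchanged only when $D'$ is already ascending --- but it is not needed, since your primary observation (RII/RIII moves and the loop detour never swap the first and second appearances of any surviving crossing label) already closes the induction.
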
 

    \begin{proof}
        By Theorem~\ref{Thm:AscendingIsUnknot}, any ascending diagram can be unknotted by a finite sequence of loop detours. By Theorem~\ref{thm:always_simplify}, it is always possible to simplify the interior of a loop-tangle such that the associated loop detour may be performed as a simplification by Reidemeister moves. Thus, any ascending diagram may be simplified to a crossingless unknot diagram by Reidemeister moves.
    \end{proof}

\begin{remark}
    The above proof suggests a method of simplifying an ascending diagram using a hierarchy of complexity, based on the number of sides of the polygons considered:
    \begin{enumerate}
        \item 1-gons, removed by RI;
        \item Bigons, removed by RII;
        \item Triangles adjacent to either:
            \begin{itemize}
                \item[a.] another triangle - performing RIII forms a bigon;
                \item[b.] the exterior loop - performing RIII removes a crossing from the loop-tangle
            \end{itemize}
        \item Triangles which are $n$ quadrilaterals away from either another triangle or the loop for increasing $n$, transported by successive RIII moves;
        \item Pentagons which are the terminals of tracks from triangles on three adjacent sides, converted to quadrilaterals by an RIII move on a triangle adjacent to two others through vertices.
    \end{enumerate}

    Proposition~\ref{prop:not_fixed} ensures that (5) is the most complex structure that one needs to consider, as in the absence of the previous levels of complexity, such a structure must appear.
    
    To simplify an ascending diagram, move down the above list within the loop-tangle based at crossing $u$ for minimum $u$. When a structure in the list is found, remove it as described and start at the beginning of the list again, until the loop detour is completed. Identify the next loop-tangle and repeat until simplification is achieved and a crossing-free unknot diagram is obtained.
\end{remark}

\section{Interdisciplinary connections} 

In this section, we discuss the applications of previous results or the robot action on electrical networks and DNA topology.

\subsection{Electrical networks}
In \cite{Goldman-Kauffman:KnotsTanglesAElectricalNetworks}, Goldman and Kauffman used a translation of Reidemeister moves on link diagrams to graphical moves on signed graphs as shown in \reffig{ElecNetwork} (see  also \cite[Figure~2.6]{Goldman-Kauffman:KnotsTanglesAElectricalNetworks} or \cite[Figure~11]{Kauffman:CombinatoricsATopology}). The original translation is due to Yajima and Kinoshita \cite{Yajima-Kinoshita_OnTheGraphsOfKnots}. Section~3 of their paper reviewed the background on classical electrical networks which can be modelled by a graph with edges assigned with resistance. The study of such electrical networks motivates the connection between graphical moves and conductance-preserving transformations on networks. More background on the correspondence between link diagrams (with a canonical choice of shaded regions) and signed graphs can be found in \cite[Section~2]{Goldman-Kauffman:KnotsTanglesAElectricalNetworks},  \cite[Section~2.4]{Adams:TheKnotBook}, and \cite[p.355]{Kauffman:KnotsAndPhysics}. The associated unsigned graph for the link diagram is sometimes called a \emph{medial graph} of the link diagram. 

Using the knot-theoretic machinery we have in this paper, we can prove the following graph-theoretic statement. 

\begin{figure}[ht]
\import{figures/}{GraphicalRMoves.pdf_tex}  
\caption{Graphical Reidemeister moves. Left:~Signed network moves (or graphical moves). Right:~The corresponding Reidemeister moves with checkerboard colourings.}
\label{Fig:ElecNetwork} 
\end{figure}  

\begin{corollary}\label{Cor:PlanarGraph}
    Any planar graph can be transformed into a finite collection of points via a sequence of the graphical moves shown in \reffig{ElecNetwork} (see  also \cite[Figure~2.6]{Goldman-Kauffman:KnotsTanglesAElectricalNetworks} or \cite[Figure~11]{Kauffman:CombinatoricsATopology}). 
\end{corollary}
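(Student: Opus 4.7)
The plan is to reduce the statement to knot theory via the classical medial/checkerboard correspondence between signed planar graphs and link diagrams reviewed in \cite{Goldman-Kauffman:KnotsTanglesAElectricalNetworks}. Given a planar graph $G$, I would form its associated link diagram $D_G$ by the medial construction, so that the vertices of $G$ correspond to shaded regions of $D_G$ and the edges of $G$ correspond (with signs determined by the crossing types) to the crossings of $D_G$. Under this dictionary the graphical moves of \reffig{ElecNetwork} are precisely the translates of the Reidemeister moves, together with their relatives corresponding to toggling the sign of a single edge (which on the link side is a crossing switch). In particular, a crossingless link diagram corresponds to a graph with no edges, i.e.\ to a finite collection of isolated vertices, so it suffices to exhibit a sequence of such moves that collapses $D_G$ to a crossingless diagram.

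To do this, first apply the robot to each link component of $D_G$ in turn. Each crossing switch performed by the robot is, on the graph side, a sign toggle of the corresponding edge, which is one of the allowed graphical moves. By \refcor{UnknottingLinks}, the post-robot diagram represents an unlink whose components are ascending (or descending). Second, invoke the simplification machinery: via the planar-separation step in \refthm{AscendingIsUnlink} together with \refcor{Simplified}, each ascending component can be moved into its own projection plane and then simplified by a sequence of Reidemeister moves that never increases the number of crossings, ultimately reducing each component to a crossingless circle. Translating each Reidemeister move back to the signed graph via the checkerboard dictionary produces a corresponding sequence of graphical moves, and the process terminates at a disjoint union of isolated shaded discs, which is a finite collection of points in $G$.

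The main obstacle I anticipate is the bookkeeping: one must verify that the checkerboard shading propagates coherently through every intermediate diagram, so that the inverse medial construction is defined at each stage, and that every Reidemeister move actually used fits one of the templates in \reffig{ElecNetwork} on the shaded side (rather than on the unshaded side, which would demand the Tait-flyping/dual translation). A secondary issue is that \refcor{Simplified} is formally stated for a single ascending knot diagram, so to apply it to the multi-component post-robot diagram one first appeals to the separation of components into disjoint projection planes via \reflem{Stacking} and \refthm{AscendingIsUnlink}; the component-wise Reidemeister sequences can then be concatenated without interference and translated back to a single sequence of graphical moves on $G$.
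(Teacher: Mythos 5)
There is a genuine gap at the very first step of your reduction. You need crossing switches on the link side --- you apply the robot to $D_G$, and the robot's basic operation is switching undercrossings to overcrossings --- and you justify this by asserting that the graphical moves of \reffig{ElecNetwork} include ``toggling the sign of a single edge.'' They do not: the moves in that figure are precisely the graphical translates of the Reidemeister moves (the conductance-preserving series, parallel, and star--triangle replacements), and a sign toggle is not a Reidemeister move and does not preserve conductance. Since the corollary restricts you to the moves shown in \reffig{ElecNetwork}, every move you use on the link side must be a Reidemeister move, so the robot cannot appear anywhere in the argument.

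The paper's proof sidesteps this entirely by exploiting the freedom you have \emph{before} any moves are made: a planar graph is unsigned, so in passing to the medial link diagram one may \emph{choose} the sign of each edge (equivalently, the over/under information at each crossing) at the outset. The paper chooses these signs so that the resulting diagram is already a stack of ascending or descending component diagrams, hence an unlink by \refthm{AscendingIsUnlink}; from there only Reidemeister moves are needed to reach the crossingless diagram, and these translate to allowed graphical moves. Your proposal can be repaired by replacing ``build $D_G$, then run the robot'' with this initial choice of signs; the rest of your outline (separating components and simplifying each, or simply invoking the existence of \emph{some} Reidemeister sequence to the trivial diagram, which is all the corollary requires --- monotonic simplification via \refcor{Simplified} is unnecessary here) then goes through.
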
 

\begin{proof} 
    Let $G$ be a planar graph. By 
    \begin{enumerate}
        \item assigning appropriate signs to each edge of $G$, 
        \item regarding each edge of $G$ as a crossing, and 
        \item regarding each vertex of $G$ as a face in a flat link diagram,
    \end{enumerate}
    we can obtain a link diagram $D$ such that each link component diagram lies completely on top of or beneath each other and is ascending or descending. By \refthm{AscendingIsUnlink}, such a link diagram is a diagram of an unlink. Hence, there is a sequence $\sigma$ of Reidemeister moves that transforms $D$ to a zero-crossing unlink diagram $D_u$, whose associated planar graph is a finite collection of points. Since each Reidemeister move on a link diagram can be translated to a signed move on the associated planar graph, the sequence $\sigma$ that transforms $D$ into $D_u$ can be translated to a sequence $\sigma_g$ of graphical moves that transforms $G$ into a finite collection of points.   
\end{proof}

Note that an explicit sequence of graphical moves that corresponds to a tangle detour move in the proof of \refthm{AscendingIsUnknot} can be obtained using the sequence of Reidemeister moves constructed in \refprop{TangleDetourMove}. 

In translating link diagrams into graphs, crossing types become plus or minus signs on the edges of the graph, corresponding to electrical conductance in the network. 
 Reidemeister II moves become special series and parallel replacements in the graph. That is, these replacements preserve conductivity of the network.  Reidemeister III moves become a star-triangle replacement that also preserves conductivity. (See \reffig{ElecNetwork}, \cite[Figure~2.6]{Goldman-Kauffman:KnotsTanglesAElectricalNetworks}, or \cite[Figure~11]{Kauffman:CombinatoricsATopology}.) Thus, our topological results can be translated to results about planar electrical networks. Goldman and Kauffman \cite{Goldman-Kauffman:KnotsTanglesAElectricalNetworks} use the opposite direction, obtaining invariants of knots and links from electrical networks, while Jaeger's work discussed in \cite{Kauffman:CombinatoricsATopology} studies other invariants from the checkerboard graphs of knots and links.

\subsection{DNA topology}\label{Sec:DNA}
DNA topology is the study of topological properties of DNA (circle DNA) such as winding, knotting, linking, and tangling. These properties are important to study because they affect the process of DNA opening or moving and replicating in the cell which are vital to certain nucleic acid processes. In particular, it has been shown in \cite{DMSZ} that knotted DNA are damaging to cells. We refer the reader to \cite{Sum, DOO, Wang} for more information on DNA topology and the importance of unknotting enzymes such as topoisomerases. 

A simplified expression of DNA can be given as a long double-stranded curve. Enzymes act on the DNA molecule to cut and then rejoin strands — effectively switching crossings. The enzyme action is, however, not orchestrated as with our robot. Consequently, in the DNA, unknotting occurs through forces and properties of the environment of the DNA. Nevertheless, it is interesting to compare DNA unknotting with our robot action on framed knots.

Framed knots and links have special projections, called \emph{blackboard framed projections}, that encode the framing information. 

\begin{definition}
Let $\overrightarrow{D}$ denote a diagram $D$ of a knot with a given orientation. A diagram equipped with \textbf{blackboard framing} is a projection of a framed knot $K$ in $\mathbb{R}^3$ onto $\mathbb{R}^2$ where the normal vector for every point on $K$ is perpendicular to the plane. If $D$ is equipped with blackboard framing, then the framing number of $K$ is the sum of the signs of the crossings of $\overrightarrow{D}$, this is also known as the \textbf{writhe} $w(D)$ of $D$. The \textbf{sign} of a crossing is provided in Figure \ref{fig:sign}.
\end{definition}

\begin{figure}[ht]
    \centering
    \begin{subfigure}{.3\textwidth}
    \centering
$\vcenter{\hbox{\begin{overpic}[scale = 1]{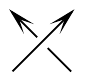}
\end{overpic} }}$
        \caption{$+1$}
\label{fig:possign}
    \end{subfigure}
     \begin{subfigure}{.3\textwidth}
    \centering
$\vcenter{\hbox{\begin{overpic}[scale = 1]{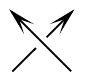}
\end{overpic} }}$
        \caption{$-1$}
\label{fig:negsign}
    \end{subfigure}
    \caption{The sign of a crossing.}
    \label{fig:sign}
\end{figure}

In the blackboard framing setting, isotopy of the framed knot diagram is given by Reidemeister II and III (i.e.~\emph{regular isotopy}) with modified Reidemeister I, as in Figure \ref{fig:modifiedR1}. Let $K$ be a framed knot and $D$ its diagram equipped with blackboard framing. Since the robot operates on knot diagrams we may extend the robot to framed knots and links by applying the robot's operation to blackboard framed projections. By an immediate consequence to \refthm{UnknottingKnots}, the robot unknots framed knots. 

\begin{figure}[ht]
    \centering
    $\vcenter{\hbox{\begin{overpic}[scale = 1]{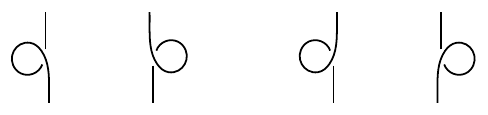}
    \put(35, 25){$\xleftrightarrow{mR1}$}
      \put(175, 25){$\xleftrightarrow{mR1}$}
\end{overpic} }}$
    \caption{The modified Reidemeister I moves.}
    \label{fig:modifiedR1}
\end{figure}

\begin{corollary}
    Let $K$ be any framed knot and $D$ be its diagram equipped with blackboard framing. For any choice of orientation and any starting point of $D$, the post-robot diagram is a diagram of a framed unknot. That is, the robot unknots framed knots.
\end{corollary}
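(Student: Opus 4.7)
The plan is to reduce the statement directly to \refthm{UnknottingKnots} (together with \refthm{AscendingIsUnknot}) by observing that the robot's action is insensitive to the passage from unframed to blackboard-framed diagrams. The key point is that the robot only switches crossings: it does not create, remove, or relocate any crossing, and in particular it does not alter the underlying $4$-valent planar graph of $D$. Consequently the image of $D$ under the robot is again a legitimate planar projection, and since the blackboard framing is determined by the embedding of the projection in the plane (i.e.\ the vectors normal to $\mathbb{R}^2$), it continues to carry a blackboard framing after the robot acts.

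First I would fix $\overrightarrow{D}$ with an arbitrary starting point $b$ and apply the robot exactly as in \refalg{Robot}, interpreting $D$ for the moment as an ordinary (unframed) oriented knot diagram. By \refthm{UnknottingKnots}, the post-robot diagram $D'$ is ascending from $b$; by \refthm{AscendingIsUnknot}, $D'$ represents the (unframed) unknot.

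Second, I would re-equip $D'$ with the blackboard framing it inherits from its planar projection. Let $w' = w(D')$ be the writhe of $D'$. Because the underlying $4$-valent graph of $D$ and $D'$ coincide and only over/under information differs, $D'$ is a bona fide blackboard framed diagram; as a framed knot, it is precisely the unknot endowed with framing $w'$, i.e.\ a framed unknot.

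The main potential obstacle is the worry that the framing might not be preserved or that the unknotting of $D'$ would require a Type~I Reidemeister move (which is \emph{not} allowed in regular isotopy). This is not actually needed for the corollary: the claim is only that $D'$ is a \emph{diagram} of a framed unknot, not that it can be reduced to a zero-crossing diagram by regular isotopy. Since any writhe value is admissible for a framed unknot, and since the argument above identifies $D'$ with the unknot of framing $w'$, the conclusion follows without any appeal to Reidemeister~I moves. Thus the robot unknots framed knots, in the sense that the post-robot blackboard framed diagram always depicts the unknot equipped with the framing $w(D')$.
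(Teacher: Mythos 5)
Your proposal is correct and takes essentially the same approach as the paper, which derives this corollary as an immediate consequence of \refthm{UnknottingKnots} (the post-robot blackboard framed diagram is ascending, hence a diagram of the unknot, now carrying whatever blackboard framing its writhe determines). Your additional clarification that no Reidemeister~I move is needed and that the framing need not be preserved is consistent with the paper, which addresses the framing change separately in the lemma that follows.
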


The framing information is not preserved by the action of the robot. In fact, the change in framing is dependent on the projection, starting point, and orientation. See for example in Figure \ref{fig:fig8example2}, the robot is applied to the figure-eight knot with framing number $0$ with the same choice in orientation but different starting points. The first choice yields an unknot with framing number $0$ while the second results in a unknot with framing number $-2$. 

\begin{figure}[ht]
    \centering
    \begin{subfigure}{.48\textwidth}
    \centering
$\vcenter{\hbox{\begin{overpic}[scale = 1]{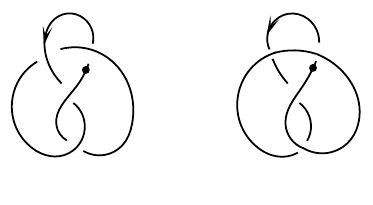}
\put(75, 50){$\xrightarrow[]{Robot}$}
\put(30, 10){$D$}
\put(136, 10){$D_R$}
\end{overpic} }}$
        \caption{$w(D) = 0$ and $w(D_R) = 0$. }
\label{fig:fig8writhe0}
    \end{subfigure}
     \begin{subfigure}{.48\textwidth}
    \centering
$\vcenter{\hbox{\begin{overpic}[scale = 1]{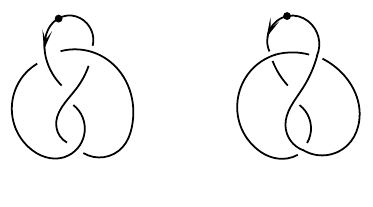}
\put(75, 50){$\xrightarrow[]{Robot}$}
\put(30, 10){$D$}
\put(136, 10){$D_R$}
\end{overpic} }}$
        \caption{$w(D) = 0$ and $w(D_R)=-2$.}
\label{fig:fig8writhen2}
    \end{subfigure}
    \caption{Example of the change in framing between the knot diagram $D$ of $K$ and the post-robot diagram $D_R$.}
    \label{fig:fig8example2}
\end{figure}

Observe that the robots action on a diagram consists of only crossing changes and for each crossing change the writhe changes by an even number. This observation is summarized in the following lemma. 

\begin{lemma}
    The robot changes the framing of a knot by an even number.
\end{lemma}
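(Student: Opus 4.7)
The plan is to follow the hint given in the paragraph immediately preceding the lemma: reduce the statement to the claim that a single crossing change alters the writhe by an even number, and then sum over all crossings the robot modifies.

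First, I would recall that in the blackboard framing convention, the framing number of $K$ equals the writhe $w(D)$, defined as the sum of the signs $\pm 1$ of the crossings of $\overrightarrow{D}$ (Figure~\ref{fig:sign}). Next, I would observe, as done already in the exposition of Algorithm~\ref{Alg:Robot}, that the robot never performs any Reidemeister moves: its only action is to switch the over/under information at selected crossings, leaving the underlying diagram (and its orientation) unchanged. In particular, the set of crossings of $D$ and of the post-robot diagram $D_R$ is in canonical bijection.

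Now I would compare signs crossing-by-crossing. For a crossing $x$ that the robot leaves alone, the contribution to the writhe is unchanged. For a crossing $x$ that the robot switches, the sign flips from $+1$ to $-1$ or from $-1$ to $+1$, so the contribution to $w(D_R) - w(D)$ is exactly $\pm 2$. Letting $S$ denote the set of crossings switched by the robot, this yields
\[
w(D_R) - w(D) \;=\; \sum_{x \in S} \bigl(\mathrm{sgn}_{D_R}(x) - \mathrm{sgn}_D(x)\bigr) \;\in\; 2\ZZ,
\]
since each summand lies in $\{-2,+2\}$. Hence the framing changes by an even integer, which is exactly the claim. There is essentially no obstacle here; the only minor care needed is to note that the robot's action, as described, is a pure sequence of crossing switches and introduces no new crossings, so the sum above is over a finite set in bijection with a subset of the crossings of $D$.
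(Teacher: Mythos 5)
Your proof is correct and follows essentially the same route as the paper's: both arguments note that the robot only switches crossings (never adding or removing any), and that each switch flips the sign of a crossing, changing the writhe by $\pm 2$, so the total change is even. Your version is simply a more explicit spelling-out of the paper's one-line observation.
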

\begin{proof}
    This is an immediate consequence of the action of the robot, since the robot does not change the number of crossings; the robot only changes crossings and a change in a crossing changes the sign of the crossing from $\epsilon$ to $\pm \epsilon$, where $\epsilon=1$ or $-1$.
\end{proof} 

Buck and Zechiedrich in \cite{BZ} hypothesised an unknotting model of type II topoisomerases on DNA: Type II topoisomerases unknot DNA by removing local hooked juxtapositions formed from the DNA. This hypothesis is backed by simulations \cite{BWFSLocalSR, LMZC, LZCjuxtaposition}. Since DNA is a 3-dimensional object, local hooked juxtapositions are defined in \cite{BZ} by using the curvature and position of the DNA strands. For simplicity we will define a hooked juxtaposition on a projection of a knot to be a bigon in the knot diagram formed from two crossings with the same crossing type. Illustrations of hooked juxtapositions are given in Figure \ref{fig:hookedjux}. \\

We observe a similar behavior from the robot, by comparing the knot diagram $D$ with the post-robot diagram $D_R$, when the starting point is chosen outside of the bigon. More precisely, given a fixed orientation on the knot diagram, the hooked juxtaposition is a bigon consisting of two crossings with the same sign, the bigon of the post-robot diagram contains two crossings consisting of different signs. This observation is illustrated in Figure \ref{fig:hookedjux} and summarised in the following lemma. 

\begin{figure}[ht]
$$\vcenter{\hbox{\begin{overpic}[scale = 1]{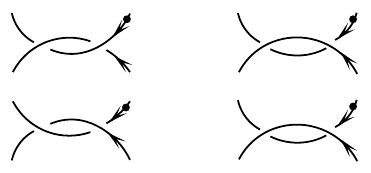}
\put(75, 60){$\xrightarrow[]{Robot}$}
\put(75, 20){$\xrightarrow[]{Robot}$}
\end{overpic} }}
     \qquad \qquad
\vcenter{\hbox{\begin{overpic}[scale = 1]{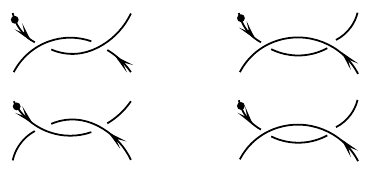}
\put(75, 60){$\xrightarrow[]{Robot}$}
\put(75, 20){$\xrightarrow[]{Robot}$}
\end{overpic} }}$$
    \caption{Illustration of the robot's action, from $D$ to $D_R$, on hooked juxtapositions when the starting point is chosen outside of the two crossings of the bigon.}
    \label{fig:hookedjux}
\end{figure}

\begin{lemma}
    The robot removes hooked juxtapositions if the starting point does not lie between the two crossings of a bigon.
\end{lemma}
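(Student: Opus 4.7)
The plan is to apply \reflem{action_of_robot}, which ensures that after the robot acts, every first visit to a crossing becomes an undercrossing and every second visit becomes an overcrossing. The strategy is to show that when $b$ lies outside the bigon, both crossings of the bigon are first-visited on one strand and second-visited on the other; the post-robot bigon then takes on the Reidemeister~II configuration, whose two crossings must have opposite signs, so it is no longer a hooked juxtaposition.

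To carry this out I would first set notation: write the bigon as two crossings $c_1, c_2$ joined by two arcs $\alpha_A, \alpha_B$ of the knot, each crossing-free by the definition of a bigon. Let $A$ and $B$ denote the two strands meeting at both $c_1$ and $c_2$, with $\alpha_A\subset A$ and $\alpha_B\subset B$. Because $b$ lies outside the bigon, the robot is outside when it begins. The first time the robot enters the bigon it does so through one of $c_1, c_2$ while traveling on some strand; without loss of generality, that strand is $A$ and the entry is at $c_1$. Since $\alpha_A$ is crossing-free, the robot continues directly along $\alpha_A$ to $c_2$ and exits the bigon there, so both $c_1$ and $c_2$ are first-visited on $A$. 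Each crossing has only one remaining pass, which lies on $B$, so both $c_1$ and $c_2$ will later be second-visited on $B$ during the unique subsequent traversal along $\alpha_B$.

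Applying \reflem{action_of_robot}, in the post-robot diagram $A$ is the understrand at both $c_1$ and $c_2$ while $B$ is the overstrand at both, giving the standard Reidemeister~II configuration on the bigon. Since every Reidemeister~II-reducible bigon has two crossings of opposite signs (which follows, for instance, from the invariance of writhe under Reidemeister~II moves, or directly by a check of the parallel and anti-parallel orientation cases), the two crossings of the post-robot bigon no longer share a sign, and the hooked juxtaposition has been removed. The most delicate step is the combinatorial observation in the previous paragraph that a single continuous passage through the bigon accounts for both first visits; this depends squarely on the hypothesis that $b$ lies outside the bigon together with the crossing-freeness of the bigon arcs, which force the robot to exit the bigon on the same strand by which it entered. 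Once that is in hand, the sign computation is routine.
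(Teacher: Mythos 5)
Your proposal is correct. The paper states this lemma as a summary of an observation and gives no written proof beyond its illustrations, so there is no argument in the text to diverge from; what you have written is the natural formalization of what those figures depict. The key step --- that a starting point outside the bigon forces both crossings to receive their first visit during a single passage along one crossing-free arc of the bigon (on one strand) and their second visit on the other strand, so that \reflem{action_of_robot} places the bigon in the Reidemeister~II, opposite-sign configuration --- is exactly right, and the concluding observation that an RII-reducible bigon cannot have two crossings of the same sign is standard.
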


The robot is dependent on the chosen starting point. Observe in Figure \ref{fig:hookedjux2}, if the starting point is chosen between the crossings of the bigon then the robot does not remove the hooked juxtaposition.

\begin{figure}[ht]
$$\vcenter{\hbox{\begin{overpic}[scale = 1]{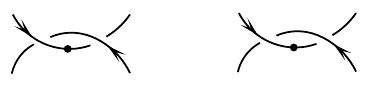}
\put(75, 15){$\xrightarrow[]{Robot}$}
\end{overpic} }}
   \qquad   \qquad
\vcenter{\hbox{\begin{overpic}[scale = 1]{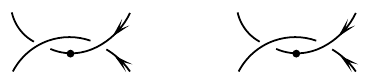}
\put(75, 15){$\xrightarrow[]{Robot}$}
\end{overpic} }}$$
    \caption{Illustration of the robot's action, from $D$ to $D_R$, on hooked juxtapositions when the starting point is chosen between the two crossings of the bigon.}
    \label{fig:hookedjux2}
\end{figure}

Furthermore, observe that a bigon that is not a hooked juxtaposition can be changed to a hooked juxtaposition if the starting point is chosen between the crossings of the bigon, illustrated in Figure \ref{fig:hookedjux3}.

\begin{figure}[ht]
$$\vcenter{\hbox{\begin{overpic}[scale = 1]{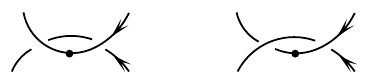}
\put(75, 15){$\xrightarrow[]{Robot}$}
\end{overpic} }}
   \qquad   \qquad
\vcenter{\hbox{\begin{overpic}[scale = 1]{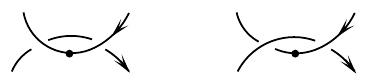}
\put(75, 15){$\xrightarrow[]{Robot}$}
\end{overpic} }}$$
    \caption{Illustration of the robot's action, from $D$ to $D_R$, creating a hooked juxtaposition on a bigon when the starting point is chosen between the two crossings of the bigon.}
    \label{fig:hookedjux3}
\end{figure}
  
However, the robot's extension to links always removes hooked juxtapositions between two different components. This is a consequence of Lemma~\ref{Lem:Stacking}.

\begin{corollary}
    The robot removes hooked juxtapositions between two different link components in a link diagram.
\end{corollary}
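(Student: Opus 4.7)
The plan is to derive the corollary as a direct application of Lemma~\ref{Lem:Stacking}. First I would unpack what a hooked juxtaposition between strands of two distinct components $L_i$ and $L_j$ looks like: such a hooked juxtaposition is a bigon $B$ whose two crossings each involve one strand from $L_i$ and one strand from $L_j$ and which carry the same sign. Using the sign convention in Figure~\ref{fig:sign} together with the examples in Figure~\ref{fig:hookedjux}, equal signs at the two crossings of such a two-component bigon force an \emph{alternating} over/under pattern at the two vertices: $L_i$ is the overstrand at one vertex of $B$ and $L_j$ is the overstrand at the other vertex. (Conversely, if the same component were the overstrand at both vertices, the two crossings would carry opposite signs, giving a standard Reidemeister~II bigon rather than a hooked juxtaposition.)

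Next I would apply Lemma~\ref{Lem:Stacking} to the post-robot diagram. That lemma places the post-robot components into disjoint parallel projection planes in a fixed stacking order. Consequently, at every crossing between two distinct components $L_i$ and $L_j$, whichever of the two is higher in the stack is uniformly the overstrand. In particular, both vertices of the bigon $B$ now have the \emph{same} component on top, which is incompatible with the alternating pattern characterizing a hooked juxtaposition. Equivalently, the two crossings of $B$ now carry opposite signs, so $B$ is no longer a hooked juxtaposition in the post-robot diagram; it has been ``removed'' in the same sense used by the preceding lemmas and figures (\reffig{hookedjux} and \reffig{hookedjux2}).

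The main anticipated obstacle is simply the sign/orientation bookkeeping in the first step, namely the equivalence between ``equal signs at the two crossings of a two-component bigon'' and ``alternating over/under pattern at its two vertices.'' This must be checked both when the two strands traverse $B$ as a parallel pair and when they traverse it antiparallel, in each case via a right-hand-rule computation showing that flipping the over/under choice at one vertex flips the corresponding sign. Once that short case analysis is complete, the corollary is immediate from Lemma~\ref{Lem:Stacking}: after the robot acts on every component, stacking prevents any two-component bigon from retaining the alternating configuration required of a hooked juxtaposition.
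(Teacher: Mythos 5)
Your proposal is correct and follows the same route as the paper, which derives this corollary directly from Lemma~\ref{Lem:Stacking}: after the robot acts on every component the stacking order makes one component uniformly the overstrand at both crossings of any two-component bigon, forcing opposite signs and hence destroying the hooked configuration. Your added sign bookkeeping (same signs $\Leftrightarrow$ alternating over/under, in both the parallel and antiparallel cases) is a standard fact about Reidemeister~II bigons and checks out, so it merely fills in detail the paper leaves implicit.
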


\bibliographystyle{amsplain}  

\bibliography{RobotUnknotting_ref.bib} 

\end{document}